\documentclass{amsart}

\usepackage{amsmath, amssymb}
\usepackage{mathtools}

\usepackage[english]{babel}
\usepackage{comment}
\usepackage{xcolor}
\usepackage{enumitem}
\setlist[enumerate,1]{
    label=(\roman*), 
    ref=(\roman*), 
    font=\normalfont, 
    topsep=3pt, 
    itemsep=2pt, 
    parsep=0pt, 
    partopsep=0pt, 
    leftmargin=*, 
    widest=iii 
}
\setlist[enumerate,2]{
    label=(\alph*), 
    ref=\theenumi(\alph*), 
    font=\normalfont, 
    topsep=2pt, 
    itemsep=1pt, 
    parsep=0pt, 
    leftmargin=*,
    widest=b
}
\usepackage{graphicx}
\usepackage{booktabs}

\usepackage[protrusion=true, expansion=true]{microtype}

\usepackage[colorlinks=false, linkbordercolor=green, citebordercolor=red]{hyperref}
\usepackage{amsthm}

\usepackage{orcidlink}

\usepackage[noabbrev, capitalize]{cleveref}

\makeatletter
\AtBeginEnvironment{fact}{\crefalias{theorem}{fact}}
\AtBeginEnvironment{lemma}{\crefalias{theorem}{lemma}}
\AtBeginEnvironment{proposition}{\crefalias{theorem}{proposition}}
\AtBeginEnvironment{corollary}{\crefalias{theorem}{corollary}}
\AtBeginEnvironment{definition}{\crefalias{theorem}{definition}}
\AtBeginEnvironment{example}{\crefalias{theorem}{example}}
\AtBeginEnvironment{remark}{\crefalias{theorem}{remark}}
\AtBeginEnvironment{notation}{\crefalias{theorem}{notation}}
\makeatother

\crefformat{equation}{(#2#1#3)}
\Crefformat{equation}{Equation (#2#1#3)}

\crefname{theorem}{Theorem}{Theorems}
\Crefname{theorem}{Theorem}{Theorems}
\crefname{lemma}{Lemma}{Lemmas}
\Crefname{lemma}{Lemma}{Lemmas}
\crefname{proposition}{Proposition}{Propositions}
\Crefname{proposition}{Proposition}{Propositions}
\crefname{corollary}{Corollary}{Corollaries}
\Crefname{corollary}{Corollary}{Corollaries}
\crefname{fact}{Fact}{Facts}
\Crefname{fact}{Fact}{Facts}
\crefname{definition}{Definition}{Definitions}
\Crefname{definition}{Definition}{Definitions}
\crefname{example}{Example}{Examples}
\Crefname{example}{Example}{Examples}
\crefname{remark}{Remark}{Remarks}
\Crefname{remark}{Remark}{Remarks}
\crefname{notation}{Notation}{Notations}
\Crefname{notation}{Notation}{Notations}

\numberwithin{equation}{section}

\theoremstyle{plain}
\newtheorem{theorem}{Theorem}[section]
\newtheorem{lemma}[theorem]{Lemma}
\newtheorem{corollary}[theorem]{Corollary}
\newtheorem{proposition}[theorem]{Proposition}
\newtheorem{fact}[theorem]{Fact}

\theoremstyle{definition}
\newtheorem{definition}[theorem]{Definition}
\newtheorem{example}[theorem]{Example}
\newtheorem{notation}[theorem]{Notation}

\theoremstyle{remark}
\newtheorem{remark}[theorem]{Remark}

\theoremstyle{plain}
\makeatletter
\newtheorem*{rep@theorem}{\pointerep@title}
\newenvironment{reptheorem}[2]{%
  \def\pointerep@title{#1 \ref{#2}}%
  \begin{rep@theorem}}
  {\end{rep@theorem}}
\makeatother

\newcommand{\Z}{\mathbb{Z}}
\newcommand{\R}{\mathbb{R}}
\newcommand{\C}{\mathbb{C}}
\renewcommand{\L}{\mathbb{L}^3}
\newcommand{\B}{\mathbb{B}_{*}}
\renewcommand{\epsilon}{\varepsilon}
\renewcommand{\phi}{\varphi}
\renewcommand{\Re}{\mathrm{Re}}
\renewcommand{\Im}{\mathrm{Im}}
\renewcommand{\hat}{\widehat}
\renewcommand{\tilde}{\widetilde}
\newcommand{\I}{\sqrt{-1}}
\newcommand{\Flux}{\mathrm{Flux}}
\newcommand{\MF}{\mathrm{MF}}
\newcommand{\GMF}{\mathrm{GMF}}
\newcommand{\M}{\mathrm{M}}
\newcommand{\GL}{\mathrm{GL}}
\newcommand{\emb}{\xhookrightarrow[]{\cong}}
\newcommand{\hml}[1]{H_1(#1,\mathbb{Z})}
\newcommand{\grad}{\mathrm{grad} \, }
\newcommand{\Prd}{\mathcal{P}}
\DeclareMathOperator{\Int}{Int}

\usepackage{marginnote}
\newcommand{\checkmarker}{\marginnote{\textcolor{magenta}{\scalebox{0.8}{\bfseries [Check!]}}}}

\usepackage{marginnote}
\usepackage{autonum}

\begin{document}

\title[]
{Approximation and Interpolation Theorems for Maximal Surfaces with Singularities}
\author{Shuki Sano}
\address[Shuki Sano]{
Department of Mathematics, \endgraf
Institute of Science Tokyo, \endgraf
O-okayama, Meguro, Tokyo, 152-8551, Japan
}
\email{sano.s.7465@m.isct.ac.jp}

\date{\today}

\keywords{Runge's theorem, Mergelyan's theorem, interpolation, maximal surface, singularities}

\subjclass[2020]{Primary 53A10; Secondary 32E30}

\email{}

\begin{abstract}
  In this paper, we prove an approximation and interpolation theorem for maxfaces in the Lorentz--Minkowski $3$-space $\mathbb{L}^3$. Alarc\'on, Forstneri\v{c}, and L\'opez established approximation and interpolation theorems for conformal minimal surfaces using the Enneper--Weierstrass representation formula. We survey their methods and apply them to maxfaces. Furthermore, by incorporating singularity criteria based on the Weierstrass data of maxfaces into the approximation and interpolation theorem, we demonstrate the existence of a maxface with prescribed singularities at specified points, as well as the existence of a maxface whose singular set has a dense image in $\mathbb{L}^3$.
\end{abstract}

\maketitle
\tableofcontents

\section{Introduction}
 Runge's theorem is a well-known approximation theorem in complex analysis. It asserts that for any compact set $K$ in the complex plane $\mathbb{C}$ whose complement has no relatively compact connected components, every function holomorphic on $K$ can be uniformly approximated on $K$ by holomorphic functions on $\mathbb{C}$ \cite{1885.Runge}. A subset $K \subset \mathbb{C}$ is called a \emph{Runge set} if its complement has no relatively compact connected components. H. Behnke and K. Stein \cite{BS.49.Runge.gene} generalized Runge's theorem to a uniform approximation theorem for functions on compact subsets of arbitrary open Riemann surfaces, and it has since been further extended to functions on compact holomorphically convex subsets of Stein manifolds. Extending such Runge-type approximation theorems to maps from Stein manifolds to complex manifolds is one of the subjects of Oka theory (see \cite{F17} for details).
 
 Mergelyan's theorem \cite{51.Merglyan.original} provides a strictly stronger approximation result than Runge's theorem. It guarantees that any continuous function on a compact Runge set $K \subset \mathbb{C}$ that is holomorphic in the interior of $K$ can be uniformly approximated on $K$ by holomorphic functions on $\mathbb{C}$. This theorem was generalized by E. Bishop \cite{Mergelyan.thm} to a uniform approximation theorem for functions on compact subsets of arbitrary open Riemann surfaces. Mergelyan-type approximations for manifold-valued maps have been studied through concepts such as the Mergelyan property.  In particular, for a map from a compact Runge set $K$ in an open Riemann surface to an Oka manifold, Forstneri\v{c} \cite{F19.Mergelyan.for.mfd-valued.map} established a Mergelyan-type approximation. Moreover, such holomorphic approximation theorems often incorporate interpolation conditions, ensuring that the approximating maps agree with given values or jets on specified submanifolds or finite sets. For a comprehensive review of holomorphic approximation theory, we refer to \cite{2020.legacy.hol.approx.}.
 
 It is known that such approximation and interpolation theorems also hold for conformal minimal surfaces in $\mathbb{R}^n$ ($n \geq 3$); this was established by A. Alarc\'on, F. Forstneri\v c, and F. J. L\'opez \cite{AFL21}. At the heart of these approximation and interpolation theorems for conformal minimal surfaces lies the \emph{Enneper--Weierstrass representation formula}. This formula relates a conformal minimal surface $x \colon M \to \mathbb{R}^n$ from an open Riemann surface $M$ to a holomorphic map $f$ from $M$ to the punctured null quadric in $\mathbb{C}^n$. By exploiting the fact that the punctured null quadric is an \emph{Oka manifold}, Alarc\'on et al. achieved the approximation and interpolation of conformal minimal surfaces by applying Runge-type approximation and interpolation theorems to the holomorphic map $f$. However, $f$ is subject to period conditions, and a new holomorphic map obtained by applying these theorems must also satisfy the same conditions. To overcome this issue, Alarc\'on et al. introduced the concept of a  \emph{period dominating spray}, establishing a result that allows for approximation and interpolation while preserving the period conditions {\cite[Lemma 3.3.1]{AFL21}}.
 
 On the other hand, conformal minimal surfaces are not the only surfaces that admit a correspondence with holomorphic maps. Other examples include constant mean curvature $1$ (CMC $1$) surfaces in the hyperbolic $3$-space $\mathbb{H}^3$ and CMC $1$ faces in the de Sitter $3$-space $\mathbb{S}^3_1$; approximation and interpolation theorems for these surfaces were established by A. Alarc\'on and J. Hidalgo \cite{AH.hol.null.in.SL2}. Conformal maximal surfaces in the Lorentz--Minkowski $3$-space $\L$ provide another such example, for which an Enneper--Weierstrass type representation formula was given by O. Kobayashi \cite{83.Kobayashi}. Furthermore, F. J. M. Estudillo and A. Romero \cite{92.ER.generalized.maximal} defined generalized maximal surfaces as maximal surfaces that admit singularities, and M. Umehara and K. Yamada \cite{UY06} defined \emph{maxfaces} as generalized maximal surfaces that are free of branch points. Umehara and Yamada also provided an Enneper--Weierstrass type representation formula for maxfaces, along with criteria for identifying cuspidal edges and swallowtails using their Weierstrass data. Subsequently, further singularity criteria using the Weierstrass data of maxfaces were obtained: a criterion for cuspidal cross caps was given by S. Fujimori, K. Saji, M. Umehara, and K. Yamada \cite{FSUY08}, and criteria for cuspidal butterflies and cuspidal $S_1^-$ singularities were provided by Y. Ogata and K. Teramoto \cite{OT18.duality.cb.cS}.
 
 Following the arguments in the proof of the approximation and interpolation theorem for conformal minimal surfaces in \cite{AFL21}, and further incorporating the singularity criteria for maxfaces, the author has obtained the following approximation and interpolation theorem for maxfaces.

 \begin{reptheorem}{Theorem}{thm:main}
   Assume that $M$ is an open Riemann surface, 
   $\theta$ is a nonvanishing holomorphic $1$-form on $M$, 
   $S \subset M$ is a connected Runge admissible set,
   $\Lambda \subset M$ and $\Sigma \subset \Int(S) \cup \Lambda$ are closed discrete subsets, 
   $V \subset M$ is an open neighborhood of $\Lambda$,
   $f : S \cup V \to \L$ is a map such that 
   $(f|_S, \phi|_S \, \theta)$ is a generalized maxface and $f|_V$ is a maxface, 
   where $\phi = 2 \, \partial f / \theta$.

   Given a positive number $\epsilon$, 
   a map $k \colon \Lambda \to \Z_{>0}$, and 
   a group homomorphism $\mathfrak{p} \colon H_1(M, \Z) \to \R^3$ with 
   $\mathfrak{p}|_{H_1(S, \Z)} = \Flux_f^{\mathcal{C}}$ 
   $($where $\mathcal{C}$ is a suitable homology basis of $S)$,
   there exists a full maxface $\tilde{f} \colon M \to \L$ satisfying the following conditions.
   \begin{enumerate}
     \item
     $\|\tilde{f} - f \|_S \le \epsilon$.
     
     \item
     The difference $\tilde{f} - f$ vanishes to order $k(p)$ at every point $p \in \Lambda$.
     
     \item
     $\Flux_{\tilde{f}} = \mathfrak{p}$ on $H_1(M, \Z)$.

     \item 
     If the maxface $f|_{\Int(S) \cup V}$ is $\mathcal{A}$-equivalent to a cuspidal edge 
     $($resp. swallowtail, cuspidal cross cap, cuspidal butterfly, cuspidal $S_1^-$ singularity$)$ 
     at $p \in \Sigma$, then $\tilde{f}$ is also $\mathcal{A}$-equivalent to a cuspidal edge 
     $($resp. swallowtail, cuspidal cross cap, cuspidal butterfly, cuspidal $S_1^-$ singularity$)$
     at $p$.
   \end{enumerate}
 \end{reptheorem}

One of the purposes of this paper is to survey the methods introduced by Alarc\'on et al. in \cite{AFL21}. Although the above theorem is proved using arguments similar to those for conformal minimal surfaces, the class of target surfaces is different. For this reason, we examine the arguments in \cite{AFL21} step by step to verify that analogous reasoning can be applied to maxfaces. Tracing the arguments in this manner also allows us to clarify exactly where the proof requires different approaches from the conformal minimal surface case. The second purpose of this paper is to present two results as corollaries of the above theorem. The first corollary guarantees the existence of a maxface with prescribed singularities at specified points, and the second corollary guarantees the existence of a maxface whose singular set has a dense image in $\L$. These corollaries are assertions concerning surface singularities—which do not appear on conformal minimal surfaces—and represent results independent of those in \cite{AFL21}.

 This paper is organized as follows. In Section 2, we prepare the facts concerning the approximation and interpolation of holomorphic maps required for the proof of \cref{thm:main}. We also confirm that the $2$-dimensional complex manifold $\B$, which arises as the target space of the holomorphic maps obtained via the Enneper--Weierstrass type representation formula for maxfaces, is an Oka manifold. Subsequently, we recall the definition of maxfaces and their Enneper--Weierstrass type representation formula, and describe the singularity criteria in detail. In Section 3, we first define a \emph{generalized maxface} (\cref{def:GMG.GNLI}) as a maxface defined on an \emph{admissible set} (\cref{def.admissible.set}). Furthermore, we prove that {\cite[Lemma 3.3.1]{AFL21}} also holds for maps taking values in $\B$. In Section 4, we prove \cref{thm:main} following \cite{AFL21}. Afterward, we state the precise assertions and proofs of the two corollaries (\cref{cor:main.,cor:dense.im.sing}) that guarantee the existence of special maxfaces.

 \bigskip

 \emph{Acknowledgments.}
The author would like to express sincere gratitude to Yu Kawakami and Kotaro Yamada for their invaluable guidance. The author is also deeply grateful to Jun Matsumoto for his many helpful suggestions.

\section{Preliminaries}
\subsection{Notation and Conventions}
  Throughout this paper, we use the following notations and conventions.
  By a Riemann surface, we always mean a connected $1$-dimensional complex manifold. 
  For a differentiable function $f$ defined on a Riemann surface $M$,  
  we write $\partial f \coloneqq (\partial f / \partial z) \, dz$, 
  where $z$ is a coordinate. 
  We denote by $\mathcal{O}(M)$ the space of holomorphic functions on $M$. 
  Given a compact set $K \subset M$, $\mathcal{O}(K)$ denotes the space of 
  holomorphic functions on some open neighborhood of $K$, and 
  $\overline{\mathcal{O}}(K)$ denotes the uniform closure of the restrictions 
  $\{f|_K : f \in \mathcal{O}(K)\}$. Moreover, we denote by $\mathcal{A}(K)$ 
  the space of continuous functions on $K$ that are holomorphic on the interior of $K$. 
  Given a complex manifold $X$, we use the analogous notations 
  $\mathcal{O}(M,X)$, $\mathcal{O}(K,X)$, $\overline{\mathcal{O}}(K,X)$, and 
  $\mathcal{A}(K,X)$ for spaces of mappings into $X$.
   
  For the Euclidean spaces $\R^n$ and $\C^n$, 
  we denote the standard Euclidean norm by $\|\cdot\|$, and 
  the absolute value of a complex number $z$ by $|z|$. 
  We denote the open ball centered at $x$ with radius $r > 0$ 
  with respect to the Euclidean norm by $B_r(x)$. 
  In particular, the open disk in $\C$ centered at $z$ with radius $r > 0$ is 
  denoted by $D_r(z)$. 
  
  Furthermore, for a topological space $X$ and its subset $A$, 
  we denote its interior by $\Int(A)$.
  When the inclusion map $A \hookrightarrow X$ induces an isomorphism between 
  their first singular homology groups, we simply write $\hml{A} \emb \hml{X}$.
  When $A$ is compact and $f : A \to \R^n$ is a continuous map, 
  we define the uniform norm of $f$ on $A$ by $\|f\|_A \coloneqq \sup_{p\in A}\|f(p)\|$. 

\subsection{Holomorphic Approximations and Interpolations}
  In this subsection, we review approximations and interpolations 
  by holomorphic functions and maps.
  
  Let $M$ be an open Riemann surface and $K$ be a compact subset of $M$. 
  A \emph{hole} of $K$ in $M$ is a relatively compact connected component of 
  $M \setminus K$. By Runge’s theorem, if $K \subset M$ is a compact subset with 
  no holes in $M$, every function $f \in \mathcal{O}(K)$ is the uniform limit on 
  $K$ of functions in $\mathcal{O}(M)$. 
  A closed subset of $M$ without holes is called a \emph{Runge set}.
  As a theorem that yields a stronger conclusion than Runge's theorem, 
  we have the Bishop--Mergelyan theorem.

  \begin{fact}[The Bishop--Mergelyan theorem, {\cite{Mergelyan.thm}}] \label{fact:Mergelyan}
    If $K$ is a compact Runge set in an open Riemann surface $M$, 
    then every function in $\mathcal{A}(K)$ can be approximated uniformly on $K$ 
    by functions in $\mathcal{O}(M)$.
  \end{fact}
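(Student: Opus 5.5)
The plan is the classical route: reduce the Bishop--Mergelyan theorem to Mergelyan's theorem on planar compact sets, using local charts and a $\bar\partial$-correction. First we reduce to a nice situation. Since $K$ has no holes, we may exhaust $M$ by smoothly bounded Runge domains. Runge's theorem (Behnke--Stein) lets us approximate holomorphic functions on a neighborhood of a compact Runge set by functions in $\mathcal{O}(M)$. So it suffices to approximate $f\in\mathcal{A}(K)$ uniformly on $K$ by functions in $\mathcal{O}(K)$, i.e.\ functions holomorphic on some open neighborhood of $K$.

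\textbf{Localization.} Fix $\epsilon>0$. Extend $f$ continuously to a compactly supported function on $M$ (Tietze). Since $f$ is uniformly continuous, we can choose a finite cover of $K$ by coordinate disks $U_j$ and a partition of unity $\{\chi_j\}$ with the following properties: each $\chi_j$ is supported in a small disk of radius $\delta$, and $\|\bar\partial\chi_j\|\lesssim 1/\delta$. This is Bishop's localization principle. Work in the chart $z_j\colon U_j\to\C$. The set $K_j=z_j(K\cap\overline{\operatorname{supp}\chi_j})$ is a compact planar set, and $f\circ z_j^{-1}$ is continuous on $K_j$ and holomorphic on its interior.

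\textbf{Local approximation.} The key local tool is the Vitushkin/Mergelyan estimate. Smooth $f$ by convolution to get $\tilde f$ with $\|\tilde f - f\|<\epsilon$ and $|\bar\partial\tilde f|\lesssim\omega(\delta)/\delta$, where $\omega$ is the modulus of continuity of $f$; moreover $\bar\partial \tilde f$ is supported within distance $\delta$ of $\partial K$. Then set $F=\tilde f-\frac{1}{\pi}\int\frac{\bar\partial\tilde f(\zeta)}{\zeta-z}\,dA$, so that $F$ is holomorphic near $K$. The Cauchy transform is not small in general. Mergelyan's trick is to replace the kernel $1/(\zeta-z)$ on each small disk meeting $\partial K$ by a function holomorphic off $K$ with the same leading behaviour at infinity, built from the conformal map of the complement of a component of $\C\setminus K$. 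This step uses the no-holes hypothesis: each small disk near $\partial K$ is close to an unbounded complementary component, whose diameter is $\ge\delta$. The resulting error is $O(\omega(\delta))$.

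\textbf{Patching.} Globally on $M$, the local corrections $F_j$ differ from $f$ by $O(\omega(\delta))$ on $K\cap U_j$. Form $g=\sum\chi_j F_j$. Then $\bar\partial g=\sum (F_j-f)\bar\partial\chi_j$ near $K$. Solve $\bar\partial u=\bar\partial g$ on a neighborhood of $K$ using a Cauchy-type kernel on the Riemann surface (e.g.\ from a meromorphic kernel, Behnke--Stein/Gunning--Narasimhan). The solution obeys the same Mergelyan-type estimate $\|u\|_K\lesssim\omega(\delta)$. Then $g-u\in\mathcal{O}(K)$ and $\|g-u-f\|_K\to0$ as $\delta\to0$.

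The main obstacle is the patching step. A naive $\bar\partial$-solution gives $\|u\|$ bounded by $(\omega(\delta)/\delta)\cdot\text{area}$, which is not small. One must instead run Mergelyan's kernel-replacement argument globally with a Riemann surface Cauchy kernel. Alternatively, one can follow Bishop's original route: show that every measure on $K$ annihilating $\mathcal{O}(K)$ also annihilates $\mathcal{A}(K)$ locally, then apply Hahn--Banach. I would try Bishop's duality approach first, since it reduces everything to the planar Mergelyan theorem on each $K_j$.
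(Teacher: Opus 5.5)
The paper gives no proof of this statement: it is quoted from Bishop's 1958 paper as a known fact. Your proposal therefore has to stand on its own, and as it stands it has a genuine gap. Everything beyond the planar Mergelyan theorem lives in the globalization step, and that is exactly the step you do not carry out. You observe that a naive $\bar\partial$-correction of $g=\sum_j\chi_jF_j$ only yields $\|u\|\lesssim\omega(\delta)/\delta$. You then name two possible remedies (a global kernel replacement with a Riemann-surface Cauchy kernel, or a duality argument) without executing either. The obstruction you hit is self-inflicted, however. You tie the size of the localization disks to the smoothing scale $\delta$ of the local Mergelyan argument. Then $\|\bar\partial\chi_j\|\sim 1/\delta$, and the size of $\bar\partial g=\sum_j(F_j-f)\,\bar\partial\chi_j$ can never be better than $\omega(\delta)/\delta$.

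The missing idea is to decouple the two scales.

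Fix once and for all finitely many closed coordinate disks $\overline{D}_j\subset U_j$ covering $K$. Take a partition of unity $\{\chi_j\}$ with $\operatorname{supp}\chi_j\subset D_j$ and $\sum_j\chi_j=1$ near $K$.

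Each planar set $z_j(K\cap\overline{D}_j)$ has connected complement in $\C$. Indeed, if $p\in\overline{D}_j\setminus K$, the component of $M\setminus K$ containing $p$ is not relatively compact. So a path in it joins $p$ to a point outside $\overline{D}_j$, and $\C\setminus z_j(\overline{D}_j)$ is connected.

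Hence the planar Mergelyan theorem, used as a black box, gives $f_j$ holomorphic near $K\cap\overline{D}_j$ with $|f_j-f|<\epsilon$ there. Set $g=\sum_j\chi_jf_j$; it is smooth near $K$ and $|g-f|\le\epsilon$ on $K$. Since $\sum_j\bar\partial\chi_j=0$ near $K$, we get $|\bar\partial g|=|\sum_j(f_j-f)\,\bar\partial\chi_j|\le C\epsilon$ on $K$. Hence $|\bar\partial g|\le 2C\epsilon$ on a neighborhood of $K$, with $C$ depending only on the fixed $\chi_j$.

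Cut off and solve $\bar\partial u=\tau\,\bar\partial g$ with a bounded solution operator on a fixed relatively compact domain containing $K$. Such an operator comes, e.g., from a Cauchy-type kernel on $M$, or from local Cauchy transforms plus a Cousin-I correction. This gives $\|u\|_K\le C'\epsilon$, and $g-u\in\mathcal{O}(K)$ lies within $(1+2CC')\epsilon$ of $f$ on $K$. Your Runge (Behnke--Stein) reduction then finishes the proof. This is Bishop's localization principle in $\bar\partial$-form, and no kernel replacement on $M$ is needed.

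Two smaller slips. First, with $\tilde f$ compactly supported, the Cauchy--Pompeiu formula gives $\tilde f=-\frac1\pi\int\frac{\bar\partial\tilde f(\zeta)}{\zeta-z}\,dA$. So your $F$ equals $2\tilde f$, not a holomorphic approximant; Mergelyan starts from this representation and modifies the kernel. Second, the local sets should be $K\cap\overline{D}_j$, not $K\cap\operatorname{supp}\chi_j$, since the latter can acquire holes in the chart.
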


  Weierstrass's theorem for holomorphic functions on the complex plane was 
  extended to open Riemann surfaces by Florack \cite{Weierstrass.thm} in 1948.
  
  \begin{fact}[{\cite{Weierstrass.thm}}] \label{fact:Weierstrass.thm.op.Riem.surf}
    Let $M$ be an open Riemann surface and 
    let $A=\{a_i\}_{i=1}^{\infty}$ be a closed discrete subset of $M$. 
    Given positive integers $k_{i}\in\Z$, there exists a function $f \in \mathcal{O}(M)$
    which vanishes to order $k_i$ at the point $a_i$ for every $i$ and 
    has no other zeros.
  \end{fact}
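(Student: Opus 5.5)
The plan is to show that the divisor $D=\sum_i k_i a_i$ on $M$ is principal. I would use the standard sheaf-cohomological argument on open Riemann surfaces, with the Runge-type approximation recalled above (Runge's theorem, \cref{fact:Mergelyan}) as the analytic input.

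\emph{Step 1 (local data).} Since $A$ is closed and discrete, I will choose pairwise disjoint coordinate disks $(U_i,z_i)$ with $z_i(a_i)=0$ and $U_i\cap A=\{a_i\}$. I then set $U_0=M\setminus A$. On $U_i$ put $g_i=z_i^{k_i}$, and on $U_0$ put $g_0=1$. On each overlap the quotient $g_{i}/g_{0}$ is holomorphic and nonvanishing, since $a_i\notin U_0$; the overlaps $U_i\cap U_j$ with $i,j\ge1$ are empty. Hence $\{g_i/g_j\}$ is a multiplicative $1$-cocycle in $\mathcal{O}^*$ for the cover $\mathcal{U}=\{U_0,U_1,\dots\}$. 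A global solution $f$ is equivalent to nonvanishing $h_j\in\mathcal{O}^*(U_j)$ with $g_i/g_j=h_i/h_j$. Given such $h_j$, the function $f:=g_j/h_j$ on $U_j$ is well defined on $M$, vanishes to order exactly $k_i$ at $a_i$, and has no other zeros.

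\emph{Step 2 (triviality of the cocycle).} I would use the exponential sequence $0\to\Z\to\mathcal{O}\xrightarrow{\exp(2\pi\I\,\cdot)}\mathcal{O}^*\to0$. This gives the exact sequence $H^1(M,\mathcal{O})\to H^1(M,\mathcal{O}^*)\to H^2(M,\Z)$. The group $H^2(M,\Z)$ vanishes because $M$ is a noncompact connected surface. The group $H^1(M,\mathcal{O})$ vanishes on an open Riemann surface. This is the solvability of the $\bar\partial$-equation (equivalently of Mittag-Leffler problems) on $M$, which Behnke--Stein derive from Runge approximation over an exhaustion of $M$ by compact Runge sets $K_1\subset K_2\subset\cdots$. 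One solves on each $K_n$ and corrects by Runge approximants so that the solutions converge. Hence $H^1(M,\mathcal{O}^*)=0$, the cocycle splits after a refinement, and the $h_j$ exist. To pass from Čech classes on a refinement back to $\mathcal{U}$, I will refine so that all intersections are simply connected, which ensures logarithms of nonvanishing functions exist there.

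\emph{Main obstacle.} The nontrivial point is $H^1(M,\mathcal{O})=0$, i.e.\ the Runge/exhaustion convergence argument. If one prefers a direct constructive route, I would do the following. Take Runge exhaustion sets $K_n$, each containing finitely many $a_i$. Inductively build $f_n\in\mathcal{O}(M)$ with the prescribed zeros on $K_n$ and no other zeros on $K_n$; such $f_n$ exist by the finite case, obtained by applying the above to finitely many points. Then write $f_{n+1}=f_n\,e^{u_n}$ on a neighbourhood of $K_n$, and use Runge's theorem to choose $u_n$ with $\|u_n\|_{K_n}<2^{-n}$. The infinite product $f=\lim f_n$ then converges locally uniformly to a function with exactly the required zeros. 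The delicate part is producing the logarithm $u_n$ of $f_{n+1}/f_n$ near $K_n$. This requires arranging that the quotient has zero periods of $d\log$ on $K_n$, and I would handle it by adjusting the finite-case solution using Runge approximation of harmonic differentials.
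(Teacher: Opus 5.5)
The paper does not prove this statement; it is quoted from Florack \cite{Weierstrass.thm} and is also contained in the refinement \cref{fact:Weierstrass.Florack} from \cite{AFL21}, so there is no argument of the paper's own to compare against. Your main route is the standard proof and is correct. You treat $\sum_i k_i a_i$ as a multiplicative Cousin problem and kill its class using $H^1(M,\mathcal{O})=0$ and $H^2(M,\Z)=0$ in the exponential sequence. This reduces the statement to one analytic theorem (Behnke--Stein) plus topology, where the paper simply uses the result as a black box.

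Two places need tightening.

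\emph{Returning from a refinement to $\mathcal{U}$.} This has nothing to do with simply connected intersections. For any sheaf of abelian groups, the map $\check H^1(\mathcal{U},\mathcal{O}^*)\to\check H^1(M,\mathcal{O}^*)$ is injective. So once the class vanishes on $M$, the $h_j$ already exist on $\mathcal{U}$ itself.

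\emph{The sketch of $H^1(M,\mathcal{O})=0$.} It covers only the limiting step. Solvability of $\bar\partial u=\eta$ on a neighbourhood of each $K_n$ is the real analytic input, obtained for example from finiteness of $H^1$ on relatively compact domains or from $L^2$ methods. \cref{fact:Mergelyan} alone does not give it, so you should cite the theorem rather than present it as a consequence of Runge approximation.

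Your constructive alternative is essentially the scheme behind the stronger \cref{fact:Weierstrass.Florack}, since it also yields approximation on $K_n$. Its period step becomes precise if you replace $f_{n+1}$ by $f_{n+1}\exp(-\int\omega)$. Here $\omega$ is a holomorphic $1$-form on $M$ chosen on a basis of $H_1(M,\Z)$ extending a basis of $H_1(K_n,\Z)$:
\begin{itemize}
\item on the basis elements from $K_n$, its periods equal $2\pi\I$ times the winding numbers of $f_{n+1}/f_n$;
\item on the remaining basis elements, its periods lie in $2\pi\I\,\Z$.
\end{itemize}
As written, however, this alternative relies on the finite case of your cohomological argument, which already handles infinite divisors directly. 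So it adds approximation on $K_n$, not logical independence.
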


  The following theorem is known as a refinement of the above theorems.

  \begin{fact}[{\cite[Theorem 1.12.14]{AFL21}}] \label{fact:Weierstrass.Florack}
    Let $M$ be an open Riemann surface, 
    $K$ be a compact Runge subset of $M$, 
    $A=\{a_i\}_{i=1}^{\infty}$ be a closed discrete subset of $M$, 
    $U\subset M$ be an open neighborhood of $A\cup K$ and 
    $f:U\to\C\cup\{\infty\}$ be a meromorphic function 
      whose only zeros and poles are at the points of $A$. 
    Given integers $k_i \in \Z_{>0}$ and a number $\epsilon>0$, 
    there exists a meromorphic function $F:M\to\C\cup\{\infty\}$ such that
      \begin{enumerate}
        \item 
        $\|F-f\|_K<\epsilon$,
     
        \item 
        $F-f$ vanishes to order $k_i$ at the point $a_i\in A$ for every $i\in\Z_{>0}$, and
     
        \item 
        $F$ has no zeros and poles on $M\setminus A$.
      \end{enumerate}
  \end{fact}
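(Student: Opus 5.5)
The plan is to derive \cref{fact:Weierstrass.Florack} from \cref{fact:Weierstrass.thm.op.Riem.surf} together with the Bishop--Mergelyan/Runge theorem (\cref{fact:Mergelyan}). The idea is to write $f$ as a nonvanishing holomorphic function times a meromorphic function carrying exactly the zeros and poles of $f$, and then approximate the nonvanishing factor through its logarithm. Write $A=\{a_i\}$ and let $m_i\in\Z$ be the order of $f$ at $a_i$ (positive for zeros, negative for poles, zero otherwise). Only finitely many $a_i$ lie in any compact set.

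\textbf{Step 1.} Apply \cref{fact:Weierstrass.thm.op.Riem.surf} twice, to the points with $m_i>0$ and to those with $m_i<0$, and take the quotient. This gives a meromorphic $h$ on $M$ whose divisor is exactly $\sum m_i a_i$. Then $g:=f/h$ is holomorphic and nonvanishing on $U$.

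\textbf{Step 2.} Enlarge $K$ to a compact Runge set $L\subset U$ that contains $K$ and the finitely many $a_i$ near $K$. This may require shrinking to a small Runge neighborhood, and I would arrange $L$ as $K$ together with small disjoint disks around the relevant points, connected to $K$ by arcs if needed so that $L$ stays Runge. The obstruction to writing $g=e^{u}$ on $L$ is the class of the periods $\frac{1}{2\pi\I}\oint dg/g$ over $H_1(L,\Z)$. Since $L$ is Runge, $H_1(L,\Z)\to H_1(M,\Z)$ is injective with a free direct summand as image. By Weierstrass/Florack-type results (a standard consequence of Runge plus the solvability of the second Cousin problem), there is a nonvanishing $\psi\in\mathcal{O}(M)$ whose winding numbers $\frac{1}{2\pi\I}\oint d\psi/\psi$ agree with those of $g$ on a basis of $H_1(L,\Z)$. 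This is the step I expect to be the main obstacle. I would first try to construct $\psi$ as $e^{v}$ times products of functions $(z-c)$-type built from \cref{fact:Weierstrass.thm.op.Riem.surf}: choose points placed in the unbounded complementary components so that each basis curve winds appropriately. Failing that, I would cite the fact that $H^1(M,\mathcal{O}^*)=0$ and that $\mathcal{O}(M)^*\to H^1(M,\Z)$ is surjective on open Riemann surfaces. Then $g/\psi=e^{u}$ with $u\in\mathcal{O}(L)$.

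\textbf{Step 3 (interpolation).} For the points $a_i\in A$, interpolation is handled by exponentials. By Runge's theorem (\cref{fact:Mergelyan}) combined with interpolation, I seek $v\in\mathcal{O}(M)$ such that $v$ approximates $u$ on $L$ and, at every $a_i$, $v-u_i$ vanishes to order $k_i+|m_i|$. Here $u_i$ is a local logarithm of $g/\psi$ near $a_i$, and this covers all $a_i$, including those far from $K$, where $g$ is still defined since $U\supset A$. This is done by the usual trick. Take $v_0$ approximating $u$ on $L$. The finite jet corrections at the points in $L$, and the infinite discrete set of jets outside $L$, are obtained by a Mittag-Leffler/Weierstrass-type interpolation. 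Specifically, add $\sigma w$, where $\sigma$ is the function from \cref{fact:Weierstrass.thm.op.Riem.surf} vanishing to order $k_i+|m_i|$ at the $a_i\in L$ (so $\sigma$ is bounded on $L$), and $w$ is chosen small on $L$ to fix the jets at the remaining points. This is done inductively over an exhaustion by Runge compacts, with errors summing to less than $\epsilon$.

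\textbf{Step 4.} Set $F:=h\,\psi\,e^{v}$. Then $F$ is meromorphic on $M$ with zeros and poles only on $A$, which gives (iii). On $K$ we have $F-f=h\psi(e^{v}-e^{u})$, which is small because $h\psi$ is bounded on $K$ and $v$ is uniformly close to $u$; this gives (i). Near $a_i$, $F-f=h\psi e^{u_i}(e^{v-u_i}-1)$ vanishes to order $m_i+k_i+|m_i|\ge k_i$, which gives (ii).
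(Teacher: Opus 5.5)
The paper gives no proof of this statement. It quotes it from \cite[Theorem 1.12.14]{AFL21}, so there is no in-paper argument to compare against. Your outline is the classical one and is sound: divide out the divisor with \cref{fact:Weierstrass.thm.op.Riem.surf}, remove the topological obstruction with a nowhere-vanishing $\psi\in\mathcal{O}(M)$, take a logarithm, then approximate and interpolate additively. It avoids the Oka principle for $\C^*$, which would be the other natural route (a variable-order version of \cref{fact:Runge.for.Oka-valued} applied to $f/h$). However, one estimate in Step 4 is false as written, and Step 2 needs a different justification.

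\textbf{Step 4 fails when $f$ has a pole in $K$.} You bound $F-f=h\psi(e^{v}-e^{u})$ on $K$ by asserting that $h\psi$ is bounded on $K$. The statement allows points $a_i\in K$ with $m_i<0$. At such a point $h$ has a pole, so uniform smallness of $e^{v}-e^{u}$ gives no control of $F-f$ near $a_i$.

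The repair uses the extra vanishing order $|m_i|$ you already imposed. Take $L$ with $K\subset\Int(L)$. For each pole $a_i\in K$, take a small closed coordinate disc $\overline{D}_i=\{|z|\le r\}\subset\Int(L)$ centred at $a_i$ and containing no other point of $A$. On $\overline{D}_i$,
\[
F-f=\bigl(h\psi e^{u}z^{|m_i|}\bigr)\cdot\frac{e^{v-u}-1}{z^{|m_i|}}.
\]
The first factor is holomorphic and independent of $v$. The second is holomorphic because $e^{v-u}-1$ vanishes to order $k_i+|m_i|\ge|m_i|$ at $a_i$, and by the maximum principle its modulus on $\overline{D}_i$ is at most $r^{-|m_i|}\|e^{v-u}-1\|_{L}$. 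On $K\setminus\bigcup_i D_i$ the function $h\psi$ is bounded. Hence $\|F-f\|_K\le C\,\|e^{v-u}-1\|_L$ with $C$ independent of $v$, which gives (i).

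\textbf{Step 2.} Drop the ``$(z-c)$-type'' construction. On a nonplanar $M$ there is no exterior in which to place $c$, and a nowhere-vanishing $\psi$ cannot be assembled from factors with zeros in $M$. Your fallback is the right idea, but the relevant input is $H^1(M,\mathcal{O})=0$, not the second Cousin problem. The exponential sequence then gives a surjection $\mathcal{O}(M)^*\to H^1(M,\Z)\cong\mathrm{Hom}(H_1(M,\Z),\Z)$.

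Choose $L=\{\rho\le c\}\subset U$ for a strongly subharmonic Morse exhaustion $\rho$ and a regular value $c$ with $K\subset\{\rho<c\}$. Then $M$ is obtained from $L$ by attaching handles of index $0$ and $1$ only. So $H_1(L,\Z)$ is a direct summand of $H_1(M,\Z)$, and the winding numbers of $g$ on $H_1(L,\Z)$ extend to a class realized by some $\psi$. Equivalently, take a holomorphic $1$-form $\omega$ on $M$ with the prescribed periods in $2\pi\I\Z$ and set $\psi=\exp\int\omega$.

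\textbf{Step 3.} Two small points. First, for $a_i\in L$ the local logarithm $u_i$ must be $u$ itself. Otherwise approximation on $L$ and interpolation at $a_i$ conflict by a constant in $2\pi\I\Z$. Second, the induction can be replaced by a one-shot argument. Set $N_i=k_i+|m_i|$. By Mittag-Leffler there is $w_0\in\mathcal{O}(M)$ with $w_0-u_i=O(z^{N_i})$ at every $a_i$. Let $\sigma$ vanish exactly to order $N_i$ at each $a_i$ and nowhere else. Approximate $(u-w_0)/\sigma\in\mathcal{O}(L)$ on $L$ by some $r\in\mathcal{O}(M)$, and set $v=w_0+\sigma r$.
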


  For manifold-valued maps, approximation and interpolation theorems also hold.

  \begin{fact}[{\cite[Theorem 1.13.1]{AFL21}}, 
  {\cite[Theorem 1.4]{F19.Mergelyan.for.mfd-valued.map}}] 
  \label{fact:Mergelyan.for.mfd-valued}
    Let $M$ be a Riemann surface and $X$ be an arbitrary complex manifold. 
    If $K$ is a compact set in $M$ such that $\mathcal{A}(K)=\overline{\mathcal O}(K)$, 
    then $\mathcal{A}(K,X) = \overline{\mathcal{O}}(K,X)$. This holds in particular 
    if $K$ has at most finitely many holes in $M$. Furthermore, the approximating maps 
    can be chosen to agree with the given map at any finite set of points $p_1, \dots, p_m$ in $K$; 
    at the points $p_j\in\Int(K)$ we can interpolate to any given finite order.
  \end{fact}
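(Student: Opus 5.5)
The plan is to reduce to the scalar case $\mathcal{A}(K)=\overline{\mathcal{O}}(K)$ in local charts. The local pieces are then glued by a Cartan-type splitting argument, following Forstneri\v{c}'s scheme \cite{F19.Mergelyan.for.mfd-valued.map}.

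\emph{Localization and local approximation.} Fix $f\in\mathcal{A}(K,X)$. By uniform continuity of $f$ on $K$, choose a finite cover of a neighborhood of $K$ by closed coordinate disks or squares $\overline{D}_1,\dots,\overline{D}_N$ in $M$ with two properties. First, $f(K\cap\overline{D}_j)$ lies in a coordinate chart $\psi_j\colon U_j\to \C^n$ of $X$, with image in a ball. Second, the $\overline{D}_j$ come from a fine grid, so that each partial union $A_k=\overline{D}_1\cup\dots\cup\overline{D}_k$, together with $B_{k}=\overline{D}_{k+1}$, forms a Cartan pair after intersecting with $K$. Bishop's localization principle says the property $\mathcal{A}(\cdot)=\overline{\mathcal{O}}(\cdot)$ is local, so it passes to each $K\cap\overline{D}_j$. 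Applying it componentwise to $\psi_j\circ f$ gives holomorphic maps $g_j$ on a neighborhood of $K\cap\overline{D}_j$ that are uniformly close to $f$ there. Since $\psi_j\circ f$ stays in a ball, the approximants still map into $U_j$.

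\emph{Gluing.} Inductively, suppose $F_k$ is holomorphic near $K\cap A_k$ and close to $f$. On the overlap $C_k=K\cap A_k\cap B_k$, the maps $F_k$ and $g_{k+1}$ are close. Embed $F_k$ in a local dominating holomorphic spray $F_k(\cdot,t)$, $t\in\C^m$ near $0$, which exists because over a Stein neighborhood of the graph one can use flows of finitely many holomorphic vector fields. Then write $g_{k+1}=F_k(\cdot,\gamma(\cdot))$ on $C_k$ with $\gamma$ small and holomorphic. Cartan's splitting lemma for the pair $(A_k,B_k)$ gives a splitting $\gamma=\alpha-\beta$, and then $F_k(\cdot,\alpha)$ and $g_{k+1}(\cdot,\beta)$ agree on $C_k$. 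Together they define $F_{k+1}$ near $K\cap A_{k+1}$, still close to $f$. After $N$ steps we get $F\in\mathcal{O}(K,X)$ approximating $f$.

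\emph{Holes and interpolation.} When $K$ has finitely many holes, $\mathcal{A}(K)=\overline{\mathcal{O}}(K)$ follows from the Bishop--Mergelyan theorem (\cref{fact:Mergelyan}) applied after removing one point from each hole, or from the classical Mergelyan theorem on finitely connected sets.

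For interpolation at $p_1,\dots,p_m$, embed the approximant $F$ in a holomorphic spray $F(\cdot,t)$. The spray is built as a composition of flows of vector fields multiplied by holomorphic functions $h_i$ on a neighborhood of $K$; the $h_i$ vanish to high order at the other points and have prescribed jets at $p_j$ (using \cref{fact:Weierstrass.thm.op.Riem.surf}-type functions). The spray is chosen so that the map $t\mapsto(\text{jets of }F(\cdot,t)\text{ at the }p_j)$ is a submersion at $t=0$. If $F$ is close enough to $f$, the implicit function theorem yields a small $t$ for which $F(\cdot,t)$ has exactly the prescribed values, and the prescribed finite jets at interior points. Only values can be fixed at boundary points, since jets are not defined there for $f$.

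The main obstacle is the gluing step: arranging the grid so that each intersection with $K$ is a genuine Cartan pair, and making the splitting estimates uniform despite $K$ being an arbitrary compact set. I would first attempt to handle this by working with small neighborhoods of $K\cap A_k$ that are themselves Cartan pairs of smoothly bounded domains, shrinking them at each step.
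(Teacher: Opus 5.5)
The paper gives no proof of this Fact; it is quoted from \cite[Theorem~1.13.1]{AFL21} and \cite[Theorem~1.4]{F19.Mergelyan.for.mfd-valued.map}. Your outline therefore has to stand on its own, and as written it has a genuine gap in the gluing step.

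The local part and the remark on holes are fine, with one correction. Bishop's localization theorem is about membership of a single function in $\overline{\mathcal{O}}$, so it does not by itself give $\mathcal{A}(K\cap\overline{D}_j)=\overline{\mathcal{O}}(K\cap\overline{D}_j)$. You only need $\psi_j\circ f|_{K\cap\overline{D}_j}\in\overline{\mathcal{O}}(K\cap\overline{D}_j)$, and that does follow from $\mathcal{A}(K)=\overline{\mathcal{O}}(K)$ by a Vitushkin-type localization, provided $f(K\cap\overline{D}_j')\subset U_j$ for a slightly larger disc $\overline{D}_j'$.

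The gluing step fails as written, for two reasons.
\begin{enumerate}
\item $g_{k+1}$ is a single map, so $g_{k+1}(\cdot,\beta)$ has no meaning. You need a spray $G$ with core $g_{k+1}$ over a neighbourhood of $K\cap B_k$ that is close to $F_k$ \emph{as a spray}, with the same parameter space, over $K\cap C_k$. Since $F_k$ is not defined over $B_k\setminus A_k$, building $G$ is itself a Runge-type approximation problem for $K\cap C_k\subset K\cap B_k$, and it constrains how the squares may be ordered.
\item Let the transition map be defined by $G(x,t)=F_k(x,\gamma(x,t))$. An additive splitting $\gamma(\cdot,0)=\alpha-\beta$ makes $F_k(\cdot,\alpha)$ and $G(\cdot,\beta)$ agree only up to a second-order error. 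Exact agreement requires solving the nonlinear equation $\alpha(x)=\gamma(x,\beta(x))$ by an implicit function theorem in Banach spaces, with Cartan's additive splitting as its linearization.
\end{enumerate}

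The point you leave open is exactly where the arbitrariness of $K$ matters. Because $g_{k+1}$ approximates $f$ only on $K$, the neighbourhood of $K\cap C_k$ on which $G$ is close to $F_k$ shrinks as the approximation improves. So do the compact neighbourhoods $\tilde A\supset K\cap A_k$ and $\tilde B\supset K\cap B_k$ on which the implicit function theorem is run. The scheme closes only if the splitting constants stay bounded under this shrinking; otherwise it is circular. This can be arranged as follows.
\begin{itemize}
\item Fix a cutoff $\chi$ with $\chi=0$ near $\overline{A_k\setminus B_k}$ and $\chi=1$ near $\overline{B_k\setminus A_k}$.
\item Fix a bounded solution operator $T$ for $\bar\partial$ on a fixed neighbourhood of $A_k\cup B_k$.
\item For any $c$ holomorphic near $\tilde A\cap\tilde B$, set $a=\chi c-T(c\,\bar\partial\chi)$ and $b=(\chi-1)c-T(c\,\bar\partial\chi)$. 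Then $c=a-b$ on every pair $(\tilde A,\tilde B)$ with $\chi=0$ near $\overline{\tilde A\setminus\tilde B}$ and $\chi=1$ near $\overline{\tilde B\setminus\tilde A}$.
\item The sup-norm bounds depend only on $\chi$ and $T$.
\end{itemize}
You name the right setting, but not this estimate, and it is the heart of the matter.

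The interpolation step has the same defect. Your spray is built around the approximant $F$. The implicit function theorem helps only if the size of the spray and the lower bound on the differential of the jet map do not degenerate as $F\to f$, and nothing in your construction guarantees that. The usual remedy is to interpolate from the outset:
\begin{itemize}
\item choose local approximants with the exact values and jets of $f$ at the $p_j$, which is easy for scalar functions;
\item place no $p_j$ in an overlap $C_k$;
\item make every gluing correction vanish to the required order at the $p_j$.
\end{itemize}

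Alternatively, all gluing can be bypassed if you have an open Stein neighbourhood $\Omega$ of the graph of $f$ over $K$ in $M\times X$. Embed $\Omega$ in some $\C^N$ by $\iota$, and approximate $x\mapsto\iota(x,f(x))$ componentwise using $\mathcal{A}(K)=\overline{\mathcal{O}}(K)$. Then compose with a holomorphic retraction onto $\iota(\Omega)$ and project to $X$. On that route the whole difficulty is concentrated in producing $\Omega$.
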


  Oka manifolds are defined as manifolds on which a Runge-type approximation theorem 
  for holomorphic maps holds.

  \begin{definition}[{\cite[Definition 5.4.1]{F17}}]\label{def:oka.mfd}
    A complex manifold $X$ is an \emph{Oka manifold} if every holomorphic map 
    $K\to X$ from a neighborhood of any compact convex set $K\subset\C^n$ $(n\in\Z_{>0})$ 
    can be approximated uniformly on $K$ by entire holomorphic maps $\C^n\to X$.  
  \end{definition}

  \begin{fact}[{\cite[Theorem 1.13.3]{AFL21}}, {\cite[Theorem 5.4.4]{F17}}] 
  \label{fact:Runge.for.Oka-valued}
    Assume that $M$ is an open Riemann surface, 
    $A\subset M$ is a closed discrete subset of $M$ and 
    $X$ is an Oka manifold endowed with a Riemannian distance function $\mathrm{dist}$. 
    Given a compact Runge subset $K\subset M$, 
    a continuous map $f:M\to X$ which is holomorphic on a neighborhood of $K\cup A$ and 
    a positive integer $s\geq1$, there exist for every $\epsilon>0$
    a neighborhood $U$ of $K\cup A$ and a homotopy $f_t:M\to X$ $(t\in[0,1])$ such that 
    $f_0=f$ and the following conditions hold for every $t\in[0,1]$.
    \begin{enumerate}
      \item 
      The map $f_t$ is holomorphic on $U$.
      
      \item
      We have that $\sup_{p\in K}\mathrm{dist}(f_t(p),f(p))<\epsilon$.
      
      \item
      The map $f_t$ agrees with $f$ to any given finite order $s$ at every point of $A$.
      
      \item
      The map $f_1$ is holomorphic on $M$.
    \end{enumerate}
  \end{fact}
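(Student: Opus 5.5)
The plan is to prove this by the standard inductive scheme of Oka theory: exhaust $M$ by compact Runge sets, and at each step extend holomorphicity across one elementary piece while keeping control on $K$ and the jets at $A$. Choose a smooth strongly subharmonic Morse exhaustion function $\rho\colon M\to\R$ such that $K\subset\{\rho<0\}$, $A$ avoids the critical points and the regular level sets used, and each sublevel set $K_j=\{\rho\le c_j\}$ (with $c_0<c_1<\cdots\to\infty$ regular values) is a compact Runge subset of $M$. Between consecutive $c_j$ there is at most one critical point of $\rho$. First, by shrinking neighborhoods and applying a preliminary homotopy (through the neighborhood of $A$, which is discrete), we may assume $f$ is holomorphic near $K_0\cup A$. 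Put $\epsilon_j=\epsilon 2^{-j-1}$. We then construct inductively continuous maps $f_j\colon M\to X$ with $f_0=f$ and homotopies joining $f_j$ to $f_{j+1}$ with the following properties: $f_{j+1}$ is holomorphic near $K_{j+1}\cup A$, it is $\epsilon_j$-close to $f_j$ on $K_j$, it agrees with $f_j$ to order $s$ on $A$, and the homotopy is holomorphic near $K_j\cup A$ and stays $\epsilon_j$-close on $K_j$. Concatenating these homotopies on $t\in[1-2^{-j},1-2^{-j-1}]$ yields $f_t$, and the limit $f_1$ is holomorphic on $M$. Choosing the $\epsilon_j$ small enough relative to the injectivity and holomorphicity data on $K_j$ ensures that the limit exists and is holomorphic, and the errors sum to less than $\epsilon$ on $K$.

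The induction step has two cases. In the \emph{noncritical case} ($\rho$ has no critical point in $K_{j+1}\setminus K_j$), $K_{j+1}$ is obtained from $K_j$ by attaching finitely many small convex bumps, i.e. closed discs $B$ with $K_j\cap B$ a convex subset of $B$ in a coordinate chart. Using the Oka property of $X$ in the form of \cref{def:oka.mfd} (convex approximation on the compact convex set $K_j\cap B\subset B\subset\C$), I approximate $f_j$ near $K_j\cap B$ by a holomorphic map on $B$. Then I glue the two maps by the gluing lemma for holomorphic sprays over a Cartan pair $(K_j,B)$: embed $f_j$ into a dominating holomorphic spray over $K_j$ (possible since $K_j$ has a Stein neighborhood), solve the Cousin-type splitting of the transition map $\gamma=\alpha\circ\beta^{-1}$ on $K_j\cap B$, and obtain a spray over $K_j\cup B$. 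The jets at $A$ are preserved by using sprays whose spray parameter vanishes to order $s$ on $A$. Concretely, we multiply the spray directions by a function from \cref{fact:Weierstrass.thm.op.Riem.surf} vanishing to order $s$ on $A\cap K_{j+1}$. Since $A$ is discrete and all interpolation points lie in regions where the map is already holomorphic, they do not interfere with the bumps. In the \emph{critical case}, $\rho$ has a critical point of index $0$ or $1$ (index $\le 1$ by subharmonicity). Index $0$ adds a disjoint disc, where $f$ is simply approximated by a constant or a local holomorphic map. Index $1$ attaches an embedded arc $E$ to $K_j$. I extend $f_j$ holomorphically to a neighborhood of $K_j\cup E$ using Mergelyan approximation for manifold-valued maps, \cref{fact:Mergelyan.for.mfd-valued}, applied on the admissible set $K_j\cup E$ (this needs $f_j|_E$ to be first homotoped to be smooth, which is harmless). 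After that, the problem reduces to the noncritical case for a thickened handlebody.

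I expect the main obstacle to be the gluing step in the noncritical case, namely the splitting lemma. One must show that a holomorphic map $\gamma(z,t)=(z,t+c(z,t))$ close to the identity on a neighborhood of $(K_j\cap B)\times \overline{B^N}$ splits as $\gamma\circ\alpha=\beta$ with $\alpha$ and $\beta$ holomorphic near $K_j$ and $B$ respectively, close to the identity. One must also show that this splitting can be arranged to fix the jets at $A$. I would handle it by the implicit function theorem in Banach spaces of bounded holomorphic maps, using a bounded linear solution operator for the $\bar\partial$/Cousin problem on the Cartan pair $(K_j,B)$ with the interpolation condition built into the function spaces. The time parameter of the homotopy is obtained by running the same construction with $t$ inserted linearly into the spray parameter.
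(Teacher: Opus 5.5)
The paper does not prove this statement. It quotes it from \cite[Theorem 1.13.3]{AFL21}, which rests on \cite[Theorem 5.4.4]{F17}, and uses it only as a black box in the proof of \cref{prop:Mergelyan.for.B-valued.map}. Your outline follows the proof in \cite{F17}: exhaustion by sublevel sets of a strongly subharmonic Morse function, convex bumps and gluing of dominating sprays over Cartan pairs in the noncritical case, and arc attachment plus Mergelyan in the critical case. However, the noncritical step as you wrote it does not close.

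You apply the Oka property only on the convex set $C=K_j\cap B\subset\C$, and only to the core map $f_j$, obtaining some $g$ holomorphic on $B$. The gluing lemma needs more than that: a spray $F_b$ over $B$ that is uniformly close to the dominating spray $F_a$ (with core $f_j$) on $C\times\overline{\mathbb{B}}^N$. Only then does the implicit function theorem give a transition map $\gamma$ with $F_a=F_b\circ\gamma$ of the form $(z,t)\mapsto(z,t+c(z,t))$ with $c$ small, which is what your splitting lemma requires. A spray over $B$ built around $g$ independently of $F_a$ has no reason to be close to $F_a$ on $C\times\overline{\mathbb{B}}^N$, so $\gamma$ need not be near the identity and the splitting step fails. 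The missing idea is to apply the approximation property to $F_a$ itself, on the compact convex set $C\times\overline{\mathbb{B}}^N\subset\C^{1+N}$. This is exactly why \cref{def:oka.mfd} asks for convex sets in $\C^n$ for every $n$; with only the case $n=1$, your argument as written does not go through.

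A second, smaller gap concerns interpolation at points $a\in A\cap(K_{j+1}\setminus K_j)$. The bumps must cover $K_{j+1}\setminus K_j$. On a bump containing such an $a$, the new map comes from the approximation on $B$ and need not agree with $f_j$ at $a$. Damping the spray by a function vanishing to order $s$ on $A$ only protects points where the core already has the correct jet. So your remark that these points ``do not interfere with the bumps'' has to be made true. One way:
\begin{enumerate}
\item Adjoin to $K_j$ small closed discs around these points, on which $f_j$ is holomorphic, together with arcs joining them to $K_j$.
\item Apply \cref{fact:Mergelyan.for.mfd-valued} with jet interpolation at these points; this is how the paper treats $\Lambda$ in Case 2 of the proof of \cref{thm:main}.
\item Choose $\rho$ and the levels $c_j$, as in \cref{prop:existence.of.Morse}, so that no slab contains both a critical point and a point of $A$. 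Otherwise a new index-$0$ disc could contain a point of $A$, and your constant map there would break interpolation.
\end{enumerate}
After this, all subsequent bumps can be taken disjoint from $A$, and the damped sprays preserve the $s$-jets.
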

 
  The following fact ensures that the complex manifold 
  \begin{equation} \label{eq:def.B}
    \B \coloneqq 
    \{(z^0, z^1, z^2) \in \C^3 \setminus \{0\} : -(z^0)^2 + (z^1)^2 + (z^2)^2 = 0\}
  \end{equation}
  is an Oka manifold.
  
  \begin{fact}[{\cite[Example 1.13.8]{AFL21}}]\label{fact:B.is.Oka}
    If $P(z_1, \dots, z_n)$ is a homogeneous quadratic polynomial on $\C^n$ for some $n\ge2$
    such that $A=\{P=0\}$ is smooth away from the origin, 
    then the manifold $X=A\setminus\{0\}$ is an Oka manifold.
  \end{fact}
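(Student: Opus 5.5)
The statement is \cref{fact:B.is.Oka}, and the task is to explain why a punctured smooth quadric cone is Oka, and in particular why $\B$ is. The approach is to exhibit $X = A\setminus\{0\}$ as a homogeneous manifold of a complex Lie group, or more simply to reach the same conclusion through a holomorphic fibre bundle with Oka fibre and Oka base. Both structures are available because $P$ is a nondegenerate quadratic form.

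First, since $A$ is smooth away from the origin, the differential $dP = 2Bz$ (with $B$ the symmetric matrix of $P$) cannot vanish on $A\setminus\{0\}$. From this I will deduce that $B$ is nondegenerate. Indeed, if $\ker B \neq 0$, pick $v \in \ker B\setminus\{0\}$. Then $P(v) = v^TBv = 0$, so $v \in A\setminus\{0\}$, while $dP(v) = 2Bv = 0$, which contradicts smoothness. Over $\C$ every nondegenerate quadratic form is linearly equivalent to $z_1^2+\dots+z_n^2$. For $\B$ this is explicit: the substitution $z^0 \mapsto \I z^0$ does it. So we may assume $P = \sum z_j^2$.

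The orthogonal group $O(n,\C)$ preserves $X$. By Witt's theorem it acts transitively on the nonzero isotropic vectors whenever $n\ge 2$: any nonzero null vector can be completed to a hyperbolic pair and then to a basis adapted to $P$. Thus $X \cong G/H$ for a complex Lie group $G$ (for $n\ge3$ one may take $SO(n,\C)$, which is connected, so that $X$ is connected). Complex homogeneous manifolds of complex Lie groups are Oka by Grauert's theorem (see \cite[Proposition 5.6.1]{F17}), and this gives the claim.

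The degenerate case $n=2$ needs a separate look. There $A\setminus\{0\}$ is a disjoint union of two copies of $\C^*$, each of which is Oka. For $\B$ with $n=3$ there is also a more concrete route. The map $\B\to\mathbb{CP}^1$ given by $z\mapsto[z]$ lands in a smooth conic, which is isomorphic to $\mathbb{CP}^1$. This map is a holomorphic $\C^*$-bundle, namely the tautological bundle with its zero section removed. Since the fibre $\C^*$ and the base $\mathbb{CP}^1$ are both Oka, the total space is Oka by \cite[Theorem 5.6.5]{F17}.

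The main obstacle is checking that the transitive action really yields a homogeneous-space structure. That is, we need $X$ to be a single orbit, which is exactly the transitivity step above, and we need the orbit map to be a holomorphic submersion so that $X\cong G/H$ biholomorphically. The first I would handle with Witt's extension argument, using an explicit hyperbolic pair $e, f$ with $B(e,f)=1$. The second follows from the standard fact that orbits of holomorphic Lie group actions are immersed submanifolds, combined with openness of the orbit in $X$ (by dimension count).
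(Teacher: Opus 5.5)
The paper does not prove this statement; it imports it from \cite[Example 1.13.8]{AFL21}. So there is no in-paper argument to compare against, and your proposal is a correct self-contained justification.

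Your main route is sound. You reduce to $P=\sum z_j^2$, obtain a transitive linear action of $O(n,\C)$ on $X$ from Witt's theorem (your hyperbolic-pair construction makes this explicit), and then invoke Grauert's theorem on complex homogeneous manifolds. Two remarks on it:
\begin{itemize}
\item Witt's theorem already covers $n=2$. The reflection $(z_1,z_2)\mapsto(z_1,-z_2)$ swaps the two punctured lines, and $SO(2,\C)\cong\C^*$ scales each one. So your separate treatment of that case is optional.
\item Once transitivity is known, the identification $X\cong G/G_x$ is a standard fact about transitive holomorphic Lie group actions, not a real obstacle.
\end{itemize}

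Your second route for $\B$ is also correct: it uses the principal $\C^*$-bundle $\B\to Q\cong\mathbb{CP}^1$ together with the fibre-bundle theorem. It needs only that the base is Oka, so it extends to punctured cones over any Oka submanifold of $\mathbb{CP}^{n-1}$. The homogeneous-space route instead relies on the full symmetry of the quadric.

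One step needs a caveat. Your proof that $B$ is nondegenerate reads ``smooth away from the origin'' as $dP\neq0$ on $A\setminus\{0\}$. Suppose it only means that the zero set $A\setminus\{0\}$ is a complex submanifold. Then rank-one forms $P=\ell^2$ slip through: $A=\{\ell=0\}$ is a hyperplane, yet $dP=2\ell\,d\ell$ vanishes on it, so a kernel vector does not produce a singular point.

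This gap is harmless for two reasons:
\begin{itemize}
\item Degenerate forms of rank $\geq2$ really do make $A$ singular along $\ker B\setminus\{0\}$, so rank one is the only exception.
\item In the rank-one case $X\cong\C^{n-1}\setminus\{0\}$ is homogeneous under $\GL(n-1,\C)$, hence Oka.
\end{itemize}
For $\B$ itself, nondegeneracy is evident.
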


  A function $\rho : M \to \R$ on a Riemann surface is said to be \emph{strongly subharmonic} 
  if it satisfies $\partial^2 \rho / \partial z \partial \bar{z} > 0$ for any complex coordinate $z$.
  If a strongly subharmonic function $\rho$ is an exhaustion function 
  (i.e., $\rho^{-1}((-\infty, c])$)is compact for every $c \in \R$), 
  then for any $c \in \R$, the set $\{\rho \le c\}$ is a compact Runge set 
  provided that it is not empty.
  The following properties are also known for a compact Runge subset of an open Riemann surface:

  \begin{fact}[{\cite[Proposition 1.12.5]{AFL21}}] \label{fact:str.subham.Morse.exh.fucn}
    If $K$ is a compact Runge subset in an open Riemann surface $M$ and 
    $U \subset M$ is an open set containing $K$, then there exists a strongly subharmonic
    Morse exhaustion function $\rho$ on $M$ such that 
    $K \subset \{\rho < 0\} \subset \{\rho \le 0\} \subset U$.
    In particular, for any compact Runge subset $K$ and an open neighborhood $U$ of $K$, 
    there exists a compact Runge subset containing $K$ in its interior and contained in $U$.
  \end{fact}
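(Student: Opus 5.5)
The plan is to build $\rho$ in three steps: a subharmonic function that separates $K$ from the complement of $U$, glued to a global exhaustion by a regularized maximum, and finally a small generic perturbation to make it Morse.

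\emph{Step 1: setup and separating functions.} Since $M$ is an open Riemann surface, it is Stein. Hence there is a smooth strongly subharmonic exhaustion function $\tau \ge 0$ on $M$; for instance $\tau = \sum_j |h_j|^2$ for a proper holomorphic embedding $h = (h_1,h_2,h_3) \colon M \to \C^3$. We use the standard fact that, on an open Riemann surface, a compact set without holes is $\mathcal{O}(M)$-convex; this follows from Runge's theorem. Fix a compact neighborhood $W \subset U$ of $K$, and choose $c > 0$ with $W \subset \{\tau < c\}$. The set $L \coloneqq \{\tau \le c\} \setminus \Int(W)$ is compact and disjoint from $K$. By $\mathcal{O}(M)$-convexity, each $q \in L$ admits some $g \in \mathcal{O}(M)$ with $|g(q)| > 1$ and $\sup_K |g| < 1/2$. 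The set where $|g|>1$ is an open neighborhood of $q$, so by compactness of $L$ finitely many $g_1, \dots, g_m$ suffice to get $\max_j |g_j| > 1$ on $L$.

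Put $\phi \coloneqq \sum_{j=1}^m |g_j|^{2N}$, which is subharmonic. Choosing $N$ large, we get $\phi \le m 2^{-2N} < 1/4$ on $K$ and $\phi \ge 1$ on $L$. Next choose $\delta > 0$ with $\delta \sup_K \tau < 1/4$ and set
\[
\rho_1 \coloneqq \phi + \delta \tau - \tfrac12 .
\]
Then $\rho_1$ is strongly subharmonic, because $\tau$ is and $\phi$ is subharmonic. Moreover $\rho_1 < 0$ on $K$ and $\rho_1 \ge 1/2$ on $L$.

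\emph{Step 2: gluing.} Let $\rho_2 \coloneqq \widetilde{\max}(\rho_1, \tau - c)$, where $\widetilde{\max}$ is a regularized maximum with smoothing parameter much smaller than $1/4$. A regularized maximum of strongly subharmonic functions is strongly subharmonic, and $\rho_2 \ge \tau - c$, so $\rho_2$ is an exhaustion.

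We check the three regions.
\begin{enumerate}
\item On $K$, both entries are negative ($\tau - c < 0$ there), with a uniform margin, so $\rho_2 < 0$.
\item On $\{\tau > c\}$ we have $\rho_2 \ge \tau - c > 0$.
\item On $L$ we have $\rho_2 \ge \rho_1 - \text{(small)} > 0$.
\end{enumerate}
Since $\{\tau \le c\} \setminus W \subset L$, this gives $\{\rho_2 \le 0\} \subset W \subset U$ and $K \subset \{\rho_2 < 0\}$.

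\emph{Step 3: Morse perturbation.} Morse functions are dense in the strong (Whitney) $\mathcal{C}^2$ topology. Choose a Morse function $\rho$ that is $\mathcal{C}^2$-close to $\rho_2$ with a positive error function $\eta(p)$ on $M$. The function $\eta$ should be smaller than the positive function $\partial^2\rho_2/\partial z\partial\bar z$ (measured in a locally finite atlas), smaller than $\tfrac12 \min(1,|\rho_2|)$ on the compact set $\{\rho_2 \le 1\}$, and smaller than $\tfrac12$ elsewhere. With this choice:
\begin{enumerate}
\item $\rho$ remains strongly subharmonic;
\item $\rho \ge \rho_2 - \tfrac12$, so $\rho$ is still an exhaustion;
\item the sign of $\rho$ agrees with that of $\rho_2$ on $\{\rho_2 \le 1\}$, and $\rho > 0$ wherever $\rho_2 > 1$.
\end{enumerate}
Hence $K \subset \{\rho < 0\} \subset \{\rho \le 0\} \subset U$.

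For the ``in particular'' part, $\{\rho \le 0\}$ is a compact sublevel set of a strongly subharmonic exhaustion, so it is a compact Runge set, as noted just before the statement. It contains $K$ in its interior $\{\rho<0\}$ and lies in $U$.

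The main obstacle is Step 2: arranging that the separating function is positive both on $L$ and outside $\{\tau \le c\}$, which requires the regularized maximum with $\tau - c$. The $\mathcal{C}^2$-control in Step 3 is routine once $\eta$ is chosen as above.
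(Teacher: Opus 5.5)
The paper gives no proof of this statement; it quotes it from \cite[Proposition 1.12.5]{AFL21}. Your argument is the standard proof of that result: separate the $\mathcal{O}(M)$-convex set $K$ from the compact shell $L$ by $\sum_j |g_j|^{2N}$, add a small multiple of a strongly subharmonic exhaustion, glue with $\tau-c$, then make a generic $\mathcal{C}^2$-small Morse perturbation. It is correct in substance. However, one step fails as written, and one parameter needs a sharper choice.

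\emph{Step 3.} You ask for a positive error function $\eta$ with $\eta<\tfrac12\min(1,|\rho_2|)$ on all of $\{\rho_2\le 1\}$. This is impossible. $M$ is connected, and $\rho_2$ is negative on $K$ and positive off $W$, so $\{\rho_2=0\}$ is nonempty when $K\neq\varnothing$. Whitney neighborhoods, however, require $\eta>0$ everywhere. Hence your claim that $\rho$ and $\rho_2$ have the same sign on $\{\rho_2\le 1\}$ is unjustified.

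You do not need that claim. Impose $\eta<\tfrac12|\rho_2|$ only on the compact set $K\cup(\{\rho_2\le 1\}\setminus\Int(W))$. By Step 2, $\rho_2$ has no zeros there: $\rho_2<0$ on $K$, and $\rho_2>0$ on $M\setminus\Int(W)=L\cup\{\tau>c\}$. Then:
\begin{itemize}
\item $\rho<0$ on $K$;
\item $\rho>0$ on $\{\rho_2\le 1\}\setminus\Int(W)$;
\item $\rho>\tfrac12$ on $\{\rho_2>1\}$.
\end{itemize}
Hence $\{\rho\le 0\}\subset\Int(W)\subset U$.

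\emph{Step 2.} The smoothing parameter of $\widetilde{\max}$ must be smaller than $\mu\coloneqq-\max_K\max(\rho_1,\tau-c)>0$. This $\mu$ may be much smaller than $1/4$; for instance, $c-\max_K\tau$ can be tiny. So choosing the parameter ``much smaller than $1/4$'' does not by itself give $\rho_2<0$ on $K$.

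With these adjustments the proof goes through. Use a regularized maximum satisfying $\max\le\widetilde{\max}\le\max+(\text{parameter})$ and $\partial_1\widetilde{\max}+\partial_2\widetilde{\max}=1$, such as Demailly's; this gives both strong subharmonicity of $\rho_2$ and $\rho_2\ge\tau-c$. The rest of your argument is fine: preservation of strong subharmonicity under the $\mathcal{C}^2$ perturbation, the exhaustion property, and the Runge property of $\{\rho\le 0\}$ via the maximum principle.
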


\subsection{Maxfaces}
  The Lorentz--Minkowski $3$-space $\L$ is the $3$-dimensional affine space $\R^3$ 
  with the inner product  
  \begin{equation}\label{Lmetric}
    \langle\ ,\ \rangle\coloneq-(dx^0)^2+(dx^1)^2+(dx^2)^2,
  \end{equation}
  where $(x^0,x^1,x^2)$ is the canonical coordinate of $\R^3$. 
  An immersion $f:M\to \L$ from an oriented $2$-dimensional manifold $M$ into $\L$ is 
  called \emph{space-like} if the induced metric 
  $ds^2\coloneqq f^*\langle\ ,\ \rangle=\langle df,df\rangle$ is positive definite on $M$. 

  A smooth map $\nu:M\to \L$ is called a (\emph{Lorentzian}) \emph{unit normal vector field} of 
  a space-like immersion $f:M\to \L$ if 
  $\langle df_p(v),\nu(p)\rangle=0$ and $\langle \nu(p),\nu(p)\rangle=-1$ hold for all 
  $p\in M$ and $v\in T_pM$.
  A space-like immersion $f:M\to \L$ is called \emph{maximal} 
  if the mean curvature function vanishes identically.

  Umehara and Yamada \cite{UY06} defined maxfaces as maximal surfaces with singularities 
  other than branch points.

  \begin{definition}[{\cite[Remark 1.2]{FKKRUY10}}, {\cite{UY06}}] \label{def:maxface}
    A smooth map $f:M\to \L$ is called a \emph{maxface} if there exists 
    an open dense subset $W_f \subset M$ such that $f|_{W_f}$ is a maximal immersion, 
    and $df$ has no zeros on $M$.
    A point where $ds^2 = \langle df,df\rangle$ degenerates is 
    called a \emph{singular point} of $f$.
  \end{definition}

  \begin{notation} \label{notation:MF}
    We denote by $\MF(M)$ the space of maxfaces on $M$.
  \end{notation}
  
  Maxfaces are surfaces that admit a Weierstrass-type representation formula 
  similar to that of conformal minimal immersions.

  \begin{fact}
  [Enneper--Weierstrass type representation for maxface, {\cite[Theorem 2.6]{UY06}}]
  \label{fact:Weierstrass.rep.maxface}
    Let $M$ be a Riemann surface and
    let $f:M\to \L$ be a smooth map.
    Then, the following are equivalent.
    \begin{enumerate}
      \item 
      The map $f$ is a maxface which is a conformal maximal immersion 
      on the open dense subset $W_f \subset M$. 
    
      \item 
      There exists a meromorphic function $g$ and 
      a holomorphic $1$-form $\omega$ on $M$ such that
      \begin{itemize}
        \item 
        $(1+|g|^2)^2\omega\overline{\omega}$ is a Riemannian metric on $M$,

        \item 
        $1-|g|^2$ does not vanish identically,

        \item 
        $\Re\displaystyle\int_C\left(-2g,1+g^2,\I(1-g^2)\right)\omega=0$
        for all closed curves $C$ in $M$, and

        \item 
        $f(p) = f(p_0) + \Re\displaystyle\int_{p_0}^p \left(-2g,1+g^2,\I(1-g^2)\right) \omega$,
        where $p_0 \in M$ is a base point.
      \end{itemize}
    \end{enumerate}  
  When $f$ satisfies either $(\mathrm{i})$ or $(\mathrm{ii})$, 
  the set of singular points of $f$ is given by $\{p\in M:|g(p)|=1\}$.
  \end{fact}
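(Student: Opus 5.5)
The proof verifies both implications directly from the null-curve structure of the data
\[
\Phi \coloneqq \left(-2g,\ 1+g^2,\ \I(1-g^2)\right)\omega .
\]
The basic identity is that $\Phi$ is null for the complex bilinear extension of \eqref{Lmetric}:
\[
-(\Phi^0)^2+(\Phi^1)^2+(\Phi^2)^2=\bigl(-4g^2+(1+g^2)^2-(1-g^2)^2\bigr)\omega^2=0 .
\]
The second identity is the Hermitian one:
\[
\langle \Phi,\overline{\Phi}\rangle=\bigl(-4|g|^2+|1+g^2|^2+|1-g^2|^2\bigr)|\omega|^2=2(1-|g|^2)^2|\omega|^2 .
\]
The Euclidean analogue is $\|\Phi\|^2=2(1+|g|^2)^2|\omega|^2$. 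At a pole of $g$, $\omega$ has a zero of exactly twice the order, so $g^2\omega$ is holomorphic there. This is what the hypothesis that $(1+|g|^2)^2\omega\overline\omega$ is a metric encodes, and it makes $\Phi$ holomorphic everywhere.

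\textbf{(ii)$\Rightarrow$(i).} Define $f$ by the integral formula. It is well defined by the period condition and smooth, with $df=\Re\Phi$, $2\partial f=\Phi$, and $f$ harmonic.
\begin{itemize}
\item Since $df(v)=\Re(\Phi(v))$ and $\Phi(\I v)=\I\,\Phi(v)$, $df$ vanishes at a point iff $\Phi$ does. This happens iff $\|\Phi\|^2=0$, which the metric hypothesis excludes, so $df$ has no zeros.
\item Nullness of $\Phi$ gives $\langle f_z,f_z\rangle=0$. Hence $ds^2=2\langle f_z,f_{\bar z}\rangle|dz|^2=(1-|g|^2)^2|\omega|^2$, which is conformal and degenerates exactly on $\{|g|=1\}$.
\item Set $W_f\coloneqq\{|g|\neq1\}$. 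Because $1-|g|^2$ is real analytic and not identically zero, $W_f$ is open and dense. On $W_f$, $f$ is a conformal space-like immersion with harmonic coordinates, so its mean curvature vector, a multiple of $\Delta f$, vanishes.
\end{itemize}
This proves (i) together with the description of the singular set.

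\textbf{(i)$\Rightarrow$(ii).} On $W_f$, $f$ is a conformal maximal immersion, hence harmonic, so $\partial\bar\partial f=0$ on $W_f$. Since $f$ is smooth and $W_f$ is dense, continuity gives $\partial\bar\partial f=0$ on all of $M$. Thus $\Phi\coloneqq 2\partial f$ is a holomorphic $\C^3$-valued $1$-form, and it is null on $W_f$ by conformality, hence everywhere by the identity theorem. Set
\[
\omega\coloneqq\tfrac12(\Phi^1-\I\Phi^2),\qquad g\coloneqq-\Phi^0/(2\omega).
\]
If $\omega\equiv0$, nullness $-(\Phi^0)^2+(\Phi^1+\I\Phi^2)(\Phi^1-\I\Phi^2)=0$ forces $\Phi^0\equiv0$. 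Then $\Phi$ takes values in the line spanned by $(0,1,\I)$, so $df$ has real rank at most $1$, contradicting that $f$ is an immersion on $W_f$. Hence $\omega\not\equiv0$ and $g$ is meromorphic.

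Nullness gives $\Phi^1+\I\Phi^2=(\Phi^0)^2/(2\omega)=2g^2\omega$, which recovers $\Phi=(-2g,1+g^2,\I(1-g^2))\omega$. The four conditions in (ii) then follow:
\begin{itemize}
\item Because $df=\Re\Phi$ has no zeros, $\Phi$ never vanishes. This is equivalent to $(1+|g|^2)^2\omega\overline\omega$ being a metric, including at poles of $g$, where $g^2\omega$ remains holomorphic.
\item By the computation above, $ds^2=(1-|g|^2)^2|\omega|^2$ is positive on the nonempty set $W_f$, so $1-|g|^2\not\equiv0$.
\item The period condition and the integral formula hold because $f$ is single valued and $df=\Re\Phi$.
\end{itemize}

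The main obstacle I expect is the extension step: that $\partial f$ stays holomorphic across the singular set, and that $\omega\not\equiv0$, which is what makes $g$ meromorphic. The first is handled by density and continuity of $\partial\bar\partial f$. The second is handled by the rank argument above. The remaining bookkeeping at the poles of $g$ is routine.
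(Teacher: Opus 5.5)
The paper does not prove this statement; it quotes it from \cite[Theorem 2.6]{UY06}, so your argument has to stand on its own. Most of it does. The identities for $\Phi$ are correct, and so is the direction (ii)$\Rightarrow$(i) together with the description of the singular set. The extension of $\partial\bar\partial f=0$ across the singular set by density, and the recovery of $\Phi=(-2g,1+g^2,\I(1-g^2))\,\omega$ from nullness, are also correct.

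\textbf{The gap is your proof that $\omega\not\equiv 0$.} If $\omega\equiv 0$, nullness gives $\Phi^0\equiv 0$ and $\Phi^1=\I\,\Phi^2$. So $\Phi$ lies on the complex line spanned by $(0,1,-\I)$, not $(0,1,\I)$; the latter is the line for $g\equiv 0$. A complex line spanned by a null vector does not force $df$ to have real rank at most $1$. For $\Phi=h\,(0,1,-\I)\,dz$ one gets $df(\partial_x)=(0,\Re(h),\Im(h))$ and $df(\partial_y)=(0,-\Im(h),\Re(h))$, which are linearly independent wherever $h\neq 0$. Concretely, $f(x+\I y)=(0,x,y)$ on $M=\C$ is a conformal maximal immersion, so (i) holds with $W_f=\C$. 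Yet $2\,\partial f=(0,1,-\I)\,dz$ has $\Phi^1-\I\,\Phi^2\equiv 0$, i.e.\ $\omega\equiv 0$.

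No better argument can close this gap, because for this surface (ii) is actually false. Any data $(g,\omega)$ as in (ii) give a holomorphic form $\Phi_{g,\omega}$ with $df=\Re\,\Phi_{g,\omega}$. Taking $(1,0)$-parts, $\Phi_{g,\omega}=2\,\partial f$. Hence $2\omega=\Phi^1_{g,\omega}-\I\,\Phi^2_{g,\omega}\equiv 0$, so $\Phi_{g,\omega}\equiv 0$, a contradiction. This is the spacelike plane with ``$g\equiv\infty$'', the analogue of the horizontal plane with $g\equiv\infty$ in the classical Weierstrass representation, and the statement as written does not exclude it.

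One checks that $\omega\equiv 0$ occurs exactly when $f^0$ is constant and $f^1+\I f^2$ is holomorphic. So the direction (i)$\Rightarrow$(ii) must do one of the following:
\begin{itemize}
\item exclude this case explicitly;
\item allow $g\equiv\infty$;
\item be stated only up to the reflection $(x^0,x^1,x^2)\mapsto(x^0,x^1,-x^2)$. This reflection acts on the data by $(g,\omega)\mapsto(1/g,\,g^2\omega)$ and turns the exceptional plane into one with $g\equiv 0$.
\end{itemize}
With that case excluded, $\omega\not\equiv 0$ follows from the characterization above, and the rest of your proof goes through unchanged.

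The paper's later use of the representation is unaffected. It works with $\phi=2\,\partial f/\theta\in\mathcal{O}(M,\B)$ and uses $g$ only locally near singular points. There $\phi^1-\I\,\phi^2\neq 0$ automatically, since $\phi^1-\I\,\phi^2=0$ at a singular point would force $\phi=0$.
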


  As follows from \cref{fact:Weierstrass.rep.maxface}, each component function 
  of a maxface is harmonic, which implies that there is no maxface defined on 
  a compact Riemann surface. Therefore, in what follows, we assume that the domain 
  of a maxface is an open Riemann surface $M$. 
  As shown by Gunning and Narasimhan \cite{GN67.nonvani.hol.1-form}, 
  any open Riemann surface $M$ admits a holomorphic immersion into $\C$. 
  By differentiating this holomorphic immersion, we obtain 
  a nonvanishing holomorphic $1$-form $\theta$ on $M$.

  Let $\B$ be the complex manifold defined by \eqref{eq:def.B}. 
  By \cref{fact:Weierstrass.rep.maxface}, a smooth map $f : M \to \L$ 
  on an open Riemann surface $M$ is a maxface which is a conformal maximal immersion 
  on the open dense subset $W_f \subset M$ 
  if and only if there exists a holomorphic map 
  $\phi = (\phi^0, \phi^1, \phi^2) \in \mathcal{O}(M, \B)$ satisfying the following conditions:
  \begin{itemize}
    \item 
    $-|\phi^0|^2 + |\phi^1|^2 + |\phi^2|^2$ does not vanish identically on $M$,

    \item 
    $\Re \displaystyle\int_C \phi \, \theta = 0$ for all closed curves $C$ in $M$, and

    \item 
    $f(p) = f(p_0) + \Re \displaystyle \int_{p_0}^p \phi \, \theta$, 
    where $p_0 \in M$ is a base point.
  \end{itemize}
  The set of singular points of $f$ is given by 
  $\{-|\phi^0|^2 + |\phi^1|^2 + |\phi^2|^2 = 0\}$.
  For a maxface $f$, we define its \emph{flux} as the homomorphism 
  $\Flux_f \colon \hml{M} \to \R^3$ given by
  \begin{equation}
    \Flux_f([C]) \coloneqq \Im \int_C \phi \, \theta = \Im \int_C 2 \, \partial f.
  \end{equation}

  A holomorphic immersion $F = (F^0,F^1,F^2):M\to\C^3$ is called a 
  \emph{Lorentzian null immersion} if
  \begin{equation}
    -(dF^0)^2 + (dF^1)^2 + (dF^2)^2 = 0
  \end{equation}
  holds on $M$. It is immediate to see that $F$ is a Lorentzian null immersion 
  if and only if $\phi = dF / \theta$ is a holomorphic map into $\B$. 
  Furthermore, if $-|dF^0|^2 + |dF^1|^2 + |dF^2|^2$ does not vanish identically on $M$, 
  then $f = \Re F$ is a maxface.

  Now, let us briefly recall the singularities of maxfaces.
  For $j=1, 2$, let $f_j : U_j \to \R^3$ be smooth maps defined on open neighborhoods 
  $U_j$ of $p_j \in \R^2$. We say that $f_2$ is 
  \emph{$\mathcal{A}$-equivalent} (or \emph{left-right equivalent}) to $f_1$ at $p_2$ 
  if there exist a diffeomorphism 
  $\psi : U_1 \to U_2$ (replacing $U_1$ and $U_2$ with sufficiently small ones 
  if necessary) with $\psi(p_1) = p_2$, and a diffeomorphism 
  $\Psi : \Omega_2 \to \Omega_1$ between sufficiently small open neighborhoods 
  $\Omega_j$ of $f_j(p_j)$ in $\R^3$ ($j=1,2$) such that $\Psi \circ f_2 \circ \psi = f_1$.
  \begin{itemize}
    \item 
    A \emph{cuspidal edge} is the map $(u,v) \mapsto (u^2, u^3, v)$ defined on 
    a neighborhood of the origin in $\R^2$.
   
    \item 
    A \emph{swallowtail} is the map $(u,v) \mapsto (u, 4v^3 + 2uv, 3v^4+uv^2)$ defined on 
    a neighborhood of the origin in $\R^2$.

    \item 
    A \emph{cuspidal cross cap} is the map $(u,v) \mapsto (u, uv^3, v^2)$ defined on 
    a neighborhood of the origin in $\R^2$.

    \item 
    A \emph{cuspidal butterfly} is the map $(u, v) \mapsto (u, 4v^5 + uv^2, 5v^4 + 2uv)$ 
    defined on a neighborhood of the origin in $\R^2$.

    \item 
    A \emph{cuspidal $S_1^-$ singularity} is the map $(u, v) \mapsto (u, v^2, v^3(u^2 - v^2))$ 
    defined on a neighborhood of the origin in $\R^2$.
  \end{itemize}
  
  Criteria for identifying the singularities of a maxface 
  in terms of its Weierstrass data are known.

  \begin{fact}
  [{\cite[Theorem 2.4]{FSUY08}}, {\cite[Theorem 30]{OT18.duality.cb.cS}}, 
  {\cite[Theorem 3.1]{UY06}}]\label{fact:hanntei}
    Let $U$ be a domain in the complex plane $(\C,z)$, and 
    let $f \colon U \to \L$ be a maxface with Weierstrass data $(g, \omega = \hat{\omega} \, dz)$.
    We define functions $\alpha, \beta$, and $\gamma$ on $U$ by
    $\alpha \coloneqq g' / (g^2\hat{\omega})$, $\beta \coloneqq g \, \alpha' / g'$,
    $\gamma \coloneqq g \, \beta' / g'$, where $'=d/dz$. Then,
    \begin{enumerate}
      \item 
      $f$ is $\mathcal{A}$-equivalent to a cuspidal edge at $p\in U$ if and only if
      $\Re\left(\alpha\right)\ne 0$ and $\Im\left(\alpha\right)\ne0$ hold at $p$.
      
      \item
      $f$ is $\mathcal{A}$-equivalent to a swallowtail at $p\in U$ if and only if
      $\Re\left(\alpha\right) \ne 0$, $\Im\left(\alpha\right) = 0$ and 
      $\Re\left(\beta\right)\ne0$ hold at $p$.
      
      \item
      $f$ is $\mathcal{A}$-equivalent to a cuspidal cross cap at $p\in U$ if and only if
      $\Re\left(\alpha\right) = 0$, $\Im\left(\alpha\right) \ne 0$ and
      $\Im\left(\beta\right) \ne 0$ hold at $p$.

      \item 
      $f$ is $\mathcal{A}$-equivalent to a cuspidal butterfly at $p\in U$ if and only if
      $\Im\left(\alpha\right) = 0$, $\Re\left(\beta\right) = 0$ and
      $\Im\left(\gamma\right) \ne 0$ hold at $p$.

      \item 
      $f$ is $\mathcal{A}$-equivalent to a cuspidal $S_1^-$ singularity at $p\in U$ 
      if and only if
      $\Im\left(\alpha\right) \ne 0$, $\Im\left(\beta\right) = 0$ and
      $\Re\left(\gamma\right) \ne 0$ hold at $p$.
    \end{enumerate}
  \end{fact}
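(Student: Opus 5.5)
The plan is to reduce all five statements to the general criteria for singularities of \emph{frontals}. These are the Kokubu--Rossman--Saji--Umehara--Yamada criteria for cuspidal edges and swallowtails, the Fujimori--Saji--Umehara--Yamada criterion for cuspidal cross caps, and Izumiya--Saji--Takahashi / Saji type criteria for cuspidal butterflies and cuspidal $S_1^-$ singularities. We then translate each geometric quantity appearing there into the Weierstrass data $(g,\omega=\hat\omega\,dz)$.

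\textbf{Step 1: frontal structure and signed area density.} First I will show that a maxface $f$ is a frontal. Its Lorentzian unit normal, written in terms of $g$, extends smoothly across $\{|g|=1\}$ as a map into the Euclidean sphere after normalization; up to a positive factor it is
\[
\nu=\frac{1}{1-|g|^2}\bigl(1+|g|^2,\,2\Re g,\,2\Im g\bigr).
\]
A signed area density is $\lambda=1-|g|^2$, again up to a nonvanishing factor.

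At a singular point $p$ we have $|g(p)|=1$, and
\[
d\lambda=-2\Re(\bar g\,g'\,dz).
\]
Hence $p$ is nondegenerate exactly when $g'(p)\neq0$, which is implicit in the hypothesis that $\alpha$ is defined and nonzero. Near such a $p$ the singular curve $\gamma(t)$ can be parametrized by solving $g(\gamma(t))=g(p)e^{it}$, so that $\gamma'(t)=\I g/g'$.

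The null direction $\eta$, the kernel of $df$, is found from
\[
df=\Re\bigl((-2g,1+g^2,\I(1-g^2))\,\omega\bigr).
\]
Using $\bar g=1/g$ on the singular set, this vector vanishes for $\eta=\I/(g\hat\omega)$, up to a real factor.

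\textbf{Step 2: first-order invariants.} With $\gamma'$ and $\eta$ in hand, transversality of $\eta$ to $\gamma'$ is measured by the real determinant of the pair $\gamma'=\I g/g'$ and $\eta=\I/(g\hat\omega)$. This is, up to nonzero factors, $\Im(\eta/\gamma')$, and $\eta/\gamma'=g'/(g^2\hat\omega)=\alpha$, so the condition is $\Im\alpha\neq0$.

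The front condition is $\eta\nu\neq0$, the derivative of the unit normal along the null direction. A direct computation should give a nonzero multiple of $\Re\alpha$. This yields criterion (i).

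\textbf{Step 3: higher-order conditions.} These are derivatives along $\gamma$ of the functions $\Im\alpha$ and $\Re\alpha$. We have $\frac{d}{dt}\alpha(\gamma(t))=\alpha'\gamma'=\I\,g\alpha'/g'=\I\beta$. Hence the vanishing order of $\Im\alpha$ (the swallowtail and butterfly conditions) is governed by $\Re\beta$ and then by $\Im\gamma$. Likewise the vanishing order of $\Re\alpha$, which plays the role of the function $\psi$ in the cross cap criterion, is governed by $\Im\beta$. One more derivative, $\frac{d}{dt}\beta=\I\gamma$, handles (iv) and (v).

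The main obstacle is verifying that the abstract functions in the cuspidal cross cap, butterfly and $S_1^-$ criteria coincide, up to nonvanishing factors and along $\gamma$, with exactly these real and imaginary parts. I would first normalize the coordinate so that $g$ is a local coordinate near $p$ (possible since $g'(p)\neq0$), which turns $\gamma$ into the unit circle and simplifies all derivatives. I would then check each criterion's extra nondegeneracy conditions, such as the $\varphi$-nondegeneracy in the $S_1^-$ criterion, separately, since a naive derivative-of-$\alpha$ argument may miss them.
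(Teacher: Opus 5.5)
The paper gives no proof of this statement. It quotes the criteria from \cite{UY06}, \cite{FSUY08} and \cite{OT18.duality.cb.cS}, and those sources argue exactly along your lines: they translate the frontal criteria into $(g,\hat\omega)$.

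Your first-order computations are correct:
\begin{itemize}
\item $\gamma'=\I g/g'$, $\eta=\I/(g\hat\omega)$ and $\eta/\gamma'=\alpha$;
\item $d\nu(\eta)$ is a nonzero multiple of $\Re\alpha$;
\item along the singular curve, $\frac{d}{dt}\alpha=\I\beta$ and $\frac{d}{dt}\beta=\I\gamma$.
\end{itemize}
Since $\det(\gamma',\eta)=|g/g'|^2\Im\alpha$, and $\psi$ is a nonvanishing multiple of $\Re\alpha$ when $\Im\alpha\ne0$, this does yield (i)--(iv). One minor slip: $(1+|g|^2,2\Re g,2\Im g)$ is Euclidean-orthogonal to $df$, not Lorentz-orthogonal. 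The Lorentzian unit normal is $\pm(1+|g|^2,-2\Re g,-2\Im g)/(1-|g|^2)$, which blows up on $\{|g|=1\}$, so $1/(1-|g|^2)$ is not a harmless positive factor. The frontal structure comes from the Euclidean normal, and that is what your Step 2 actually uses.

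The genuine gap is (v). Your claim that ``one more derivative handles (iv) and (v)'' cannot be right, because (v) as written is false.
\begin{itemize}
\item A cuspidal $S_1^-$ germ is not a front at the origin. By your Step 2 this forces $\psi(0)=0$, i.e.\ $\Re\alpha(p)=0$.
\item Then $\frac{d}{dt}\Re\alpha=-\Im\beta$ and $\frac{d^2}{dt^2}\Re\alpha=-\Re\gamma$ give the conditions $\Re\alpha=\Im\beta=0$, $\Re\gamma\ne0$.
\item This is also what (iv) predicts under the conjugation $\hat\omega\mapsto\I\hat\omega$ (so $\alpha,\beta,\gamma\mapsto-\I\alpha,-\I\beta,-\I\gamma$), which underlies the duality in \cite{OT18.duality.cb.cS}.
\end{itemize}
The hypothesis $\Im\alpha\ne0$ listed in (v) does not force $\Re\alpha(p)=0$. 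Take $g=z$ and $\omega=z^{-2}(z+\I)^{-1}dz$ on a small disk about $p=1$. Then $\alpha=z+\I$ and $\beta=\gamma=z$, so all three conditions of (v) hold at $p$. Yet by (i), $f$ is a cuspidal edge there; that is a front, so it is not $\mathcal{A}$-equivalent to a cuspidal $S_1^-$.

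You should prove (v) with $\Re\alpha=0$ in place of $\Im\alpha\ne0$; then $\Im\alpha\ne0$ is automatic since $\alpha(p)\ne0$. Even then, derivatives along $\gamma$ are not enough. The germs $S_1^+$ and $S_1^-$ both have $\psi$ with a double zero along the singular curve. So the remaining hypotheses of the $S_1^{\pm}$ criterion, including the one that fixes the sign, need a separate computation off the singular curve.

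Two further points:
\begin{itemize}
\item You tacitly assume $|g(p)|=1$. The statement omits it, and without it the ``if'' directions of (i)--(v) fail at regular points, so it must be added.
\item For the ``only if'' directions you also need $g'(p)\ne0$. This holds because the model germs have nondegenerate singular points and nondegeneracy is $\mathcal{A}$-invariant.
\end{itemize}
None of this affects the paper's later use in \cref{prop:weak.approx.} and \cref{thm:main}. That use only needs the criteria to depend on $g(p)$, $\alpha(p)$, $\beta(p)$ and $\gamma(p)$, which the approximation preserves.
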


  \begin{example}[Lorentzian Enneper surface 
  {\cite[Example 2.6]{FSUY08}}, {\cite[Example 5.2]{UY06}}] \label{ex:L.Enneper}
    The \emph{Lorentzian Enneper surface} is the maxface $f : \C \to \L$ 
    with Weierstrass data $(z, dz)$.
    The set of singularities is 
    $\{|z| = 1\}$. Moreover, $f$ is $\mathcal{A}$-equivalent to 
    a cuspidal edge at every point of 
    $\{|z| = 1\} \setminus 
    \{\pm 1, \pm \I,
    e^{\I \frac{\pi}{4}}, e^{\I \frac{3}{4}\pi},
    e^{\I \frac{5}{4}\pi}, e^{\I \frac{7}{4}\pi}\}$; 
    it is $\mathcal{A}$-equivalent to a swallowtail at every point of 
    $\{\pm 1, \pm \I\}$, and 
    to a cuspidal cross cap at every point of 
    $\{e^{\I \frac{\pi}{4}}, e^{\I \frac{3}{4}\pi}, 
     e^{\I \frac{5}{4}\pi}, e^{\I \frac{7}{4}\pi}\}$.
  \end{example}

  \begin{example}[{\cite{OT18.duality.cb.cS}}] \label{ex:cB.cS}
    The maxface $f_1 : \R \times (-\pi, \pi) \to \L$ with the Weierstrass data
    $(-e^z + 1/\sqrt{2}, \, (-\I e^{-z}/ 2) \, dz)$ is $\mathcal{A}$-equivalent to
    a cuspidal butterfly at each point of $\{\log(1/\sqrt{2}) \pm \I \pi/2\}$,
    while the maxface $f_2 : \R \times (-\pi, \pi) \to \L$ with the Weierstrass data
    $(-e^z + 1/\sqrt{2}, \, (e^{-z} / 2) \, dz)$ is $\mathcal{A}$-equivalent 
    to a cuspidal $S_1^-$ singularity at each point of
    $\{\log(1/\sqrt{2}) \pm \I \pi/2\}$.
  \end{example}

  In \cite{AFL21}, a map $f : M \to \mathbb{C}^n$ from a connected manifold $M$ 
  to $\C^n$ is said to be \emph{full} if the $\C$-linear span of its image satisfies
  $\mathrm{Span}_{\C} \, f(M) = \mathbb{C}^n$. 
  Moreover, a conformal minimal immersion $x$ (resp. a null immersion $z$) is referred to as 
  a full conformal minimal immersion (resp. a full null immersion) 
  if the map $2 \, \partial x / \theta$ (resp. $\partial z / \theta$) is full. 
  Following this approach, we introduce the notions of a full maxface and a 
  full Lorentzian null immersion.
   
  \begin{definition} \label{def:full}
    Let $M$ be an open Riemann surface and 
    let $\theta$ be a nonvanishing holomorphic $1$-form on $M$.
    We call a maxface $f : M \to \L$  
    (resp. a Lorentzian null immersion $F : M \to \C^3$) \emph{full} if the map
    $2 \, \partial f / \theta$ (resp. $\partial F / \theta$) $: M \to \B \subset \C^3$ 
    is full.
  \end{definition}

  Just as the image of a full conformal minimal immersion does not lie in any plane, 
  the image of a full maxface also does not lie in a plane. 
  
  Below, we provide lemmas regarding full maps.
  
  \begin{lemma}\label{lem.rest.full}
    Let $M$ be a connected Riemann surface. 
    If $F:M\to\C^n$ is a full holomorphic map, and if $A\subset M$ has an accumulation point, 
    then $\mathrm{Span}F_{\C}(A)=\C^n$. 
  \end{lemma}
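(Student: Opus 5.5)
We argue by contradiction, using the identity theorem for holomorphic functions. Write $W \coloneqq \mathrm{Span}_{\C} F(A)$, a complex linear subspace of $\C^n$. Suppose $W \neq \C^n$. Then there is a nonzero $\C$-linear functional $\lambda \colon \C^n \to \C$ with $\lambda|_W = 0$. For instance, take $\lambda(z) = \sum_{j} c_j z_j$, where $c = (c_1,\dots,c_n) \neq 0$ is chosen so that the vector $\bar c$ is Hermitian-orthogonal to $W$.

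Now consider the holomorphic function $h \coloneqq \lambda \circ F \in \mathcal{O}(M)$. By construction $h$ vanishes on $A$. Let $a \in M$ be an accumulation point of $A$. By continuity $h(a) = 0$, and $a$ is a non-isolated zero of $h$. Since $M$ is connected, the identity theorem gives $h \equiv 0$ on $M$. This means $F(M) \subset \ker \lambda$, a proper subspace of $\C^n$, so $\mathrm{Span}_{\C} F(M) \subset \ker\lambda \neq \C^n$. This contradicts the fullness of $F$, and hence $\mathrm{Span}_{\C} F(A) = \C^n$.

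Two points need care. First, the accumulation point must lie in $M$ itself; this is the meaning of ``has an accumulation point'' in the statement. If $A$ only accumulated at the ideal boundary of $M$, the conclusion could fail. Second, the identity theorem applies on a connected Riemann surface in the usual way, via local coordinates and connectedness. Beyond these, the argument has no real obstacle; the only step needing attention is the linear-algebra fact that a proper subspace lies in the kernel of a nonzero linear functional.
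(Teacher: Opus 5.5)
Your proof is correct and is essentially the paper's argument. The paper applies the identity theorem to the orthogonal projection $f$ of $F$ onto $\mathrm{Span}_{\C}F(A)$, concluding $F=f$ from $f|_A=F|_A$. You apply it to a single nonzero linear functional annihilating that span, which is an equivalent formulation of the same idea.
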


  \begin{proof}
    Assume $\mathrm{Span}F_{\C}(A)\subsetneq\C^n$. 
    We can take an orthonormal basis $\{v_1, \dots, v_k\}$ of $\mathrm{Span}_{\C}F(A)$. 
    Let $f : M\to\C^n$ be a holomorphic map defined by 
    \begin{equation}
      f(p) \coloneqq \sum_{i=1}^k\left(^t\overline{v_i}\cdot F(p)\right)v_i.
    \end{equation}
    It holds that $f(M) \subset \mathrm{Span}_{\C} F(A)$, i.e., 
    $f$ is not a full map. On the other hand, the identity theorem implies 
    $f = F$ because of $f|_A = F|_A$. 
    This contradicts the fullness of $F$. Now, we know that $\mathrm{Span}F_{\C}(A)=\C^n$.
  \end{proof}

  \begin{lemma}\label{lem:full.then.not.identically.zero}
    Let $M$ be an open Riemann surface and 
    let $\phi=(\phi^0,\phi^1,\phi^2) : M \to \B$ be a holomorphic map. 
    If $\phi$ is full, then $-|\phi^0|^2+|\phi^1|^2+|\phi^2|^2$ does not vanish identically.
  \end{lemma}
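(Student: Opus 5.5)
We argue by contraposition. Suppose $-|\phi^0|^2+|\phi^1|^2+|\phi^2|^2 \equiv 0$ on $M$; we will show that $\phi$ then takes values in a proper complex linear subspace of $\C^3$, so it is not full. Together with the quadric equation $-(\phi^0)^2+(\phi^1)^2+(\phi^2)^2=0$, the assumption says that $(\phi^1,\phi^2)\in\C^2$ satisfies both
\begin{equation}
  |\phi^1|^2+|\phi^2|^2=|\phi^0|^2=|(\phi^1)^2+(\phi^2)^2|
\end{equation}
at every point of $M$.

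The first step is a pointwise algebraic observation. Put $a=\phi^1+\I\phi^2$ and $b=\phi^1-\I\phi^2$. Then $(\phi^1)^2+(\phi^2)^2=ab$ and $|\phi^1|^2+|\phi^2|^2=(|a|^2+|b|^2)/2$. The identity above becomes $(|a|^2+|b|^2)/2=|a||b|$, that is, $(|a|-|b|)^2=0$. Hence $|a|=|b|$ everywhere on $M$.

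The second step uses holomorphy. Since $\phi\in\mathcal{O}(M,\B)$ with $\phi\neq 0$, $a$ and $b$ cannot both vanish at a point: if they did, then $\phi^1=\phi^2=0$, and therefore $\phi^0=0$ by the quadric equation. Combined with $|a|=|b|$, this shows that $a$ and $b$ are both nowhere zero. So $a/b$ is a holomorphic function on the connected surface $M$ with $|a/b|\equiv 1$. By the open mapping theorem, $a/b\equiv c$ for some constant $c$ with $|c|=1$.

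The third step concludes. The relation $a=cb$ reads $(1-c)\phi^1+\I(1+c)\phi^2\equiv0$. This is a nontrivial linear relation, since $1-c$ and $1+c$ cannot both vanish. Therefore $\phi(M)$ lies in a proper linear subspace of $\C^3$, and $\phi$ is not full, which is the desired contradiction. The only point needing care is ruling out common zeros of $a$ and $b$ before dividing. This is handled by the fact that $\phi$ takes values in $\B$, which excludes the origin.
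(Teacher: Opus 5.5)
Your proof is correct. It reaches the same kind of conclusion as the paper but differs in the choice of coordinates and in where it stops.

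The paper rewrites the assumption as $|(\phi^1)^2+(\phi^2)^2|=|\phi^1|^2+|\phi^2|^2$ and asserts by ``direct calculation'' that $\Im(\overline{\phi^1}\,\phi^2)=0$. It concludes that $\phi^1/\phi^2$ is a real-valued meromorphic function, hence a real constant $c$. It then uses the quadric to get $\phi^0=c'\phi^2$ with $c'\in\R$, so that $\phi=(c',c,1)\,\phi^2$ lies in a complex line.

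You instead pass to the null coordinates $a=\phi^1+\I\phi^2$ and $b=\phi^1-\I\phi^2$. In these coordinates the equality case becomes the transparent identity $(|a|-|b|)^2=0$, and you conclude with a unimodular holomorphic quotient rather than a real meromorphic one. The two quotients correspond under the Cayley transform $a/b=(r+\I)/(r-\I)$ with $r=\phi^1/\phi^2$, so the underlying idea is the same.

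Your version has two advantages. The pointwise computation is explicit, and the division is justified: using $0\notin\B$ you show that $a$ and $b$ have no zeros. The paper divides by $\phi^2$ without addressing the case $\phi^2\equiv 0$; this is harmless, since $\phi$ is then clearly not full, but it is unstated. You also stop as soon as you have one nontrivial linear relation, which is all that non-fullness requires. The paper's extra step through $\phi^0$ gives the stronger conclusion that $\phi(M)$ lies in a single complex line, which the lemma does not need.
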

  
  \begin{proof}
    Assume that there exists a full holomorphic map $\phi=(\phi^0,\phi^1,\phi^2)$ such that 
    $-|\phi^0|^2+|\phi^1|^2+|\phi^2|^2$ vanishes identically. Then, it holds that
    \begin{equation}
      |(\phi^1)^2+(\phi^2)^2|=|(\phi^0)^2|=|\phi^1|^2+|\phi^2|^2
    \end{equation}
    on $M$. We can show that
    \begin{equation}
      \Re(\phi^1) \, \Im(\phi^2) - \Im(\phi^1) \, \Re(\phi^2)=0
    \end{equation}
    by a direct calculation. Thus $\phi^1/\phi^2$ is a real-valued meromorphic function, 
    i.e., it is a constant function on $M$. Let $c\in\R$ satisfy 
    $\phi^1 = c \, \phi^2$ on $M$. This implies
    \begin{equation}
      (\phi^0)^2=(\phi^1)^2+(\phi^2)^2=(c^2+1)(\phi^2)^2, 
    \end{equation}
    from which, we see that there is a real number $c'$ such that 
    $\phi^0=c'\phi^2$ holds on $M$. Consequently, $\phi$ must be a function of 
    the form $\phi=(c',c,1)\phi^2$, however this contradicts its fullness.
  \end{proof}

\section{Maxfaces on Admissible Sets and Preparations for Approximations and Interpolations}

  In this section, we state the definition of an \emph{admissible set} introduced by 
  Alarcón, Forstnerič, and López \cite{AFL21}. 
  We then introduce the notions of \emph{generalized maxfaces} and 
  \emph{generalized Lorentzian null immersions}, 
  by analogy with their definitions of conformal minimal immersions and 
  null curves on admissible sets 
  (i.e., generalized conformal minimal immersions and generalized null curves).
  We then state and prove the propositions required for 
  the approximation and interpolation theorems for maxfaces, following \cite{AFL21}.
  
  \begin{definition}[{\cite[Definition 1.12.9]{AFL21}}]\label{def.admissible.set}
    Let $M$ be a Riemann surface. An \emph{admissible set} in $M$ is 
    a compact set of the form $S=K\cup E$, where $K$ is a finite union of 
    pairwise disjoint compact domains with piecewise $C^1$ boundaries in $M$ 
    and $E=S\setminus\Int(K)$ is a union of finitely many pairwise disjoint smooth Jordan arcs 
    and closed Jordan curves meeting $K$ only at their endpoints (if at all) and 
    such that their intersections with the boundary $\partial K$ of $K$ are transverse.
  \end{definition}

  \begin{remark} \label{rem:reg.nbd.of.admissible.set}
    Let $S$ be an admissible set of a Riemann surface $M$. 
    For any Riemannian distance function $d$ on $M$ and 
    any sufficiently small $r > 0$, the open neighborhood 
    \begin{equation}
      S_r \coloneqq \{p \in M : d(p, S) < r\}
    \end{equation}
    of $S$ satisfies $\hml{S} \emb \hml{S_r}$ (see \cite[p. 69]{AFL21}). 
    This set $S_r$ is called a \emph{regular neighborhood} of $S$.
  \end{remark}

  \begin{fact}[{\cite[Lemma 1.12.10]{AFL21}}] \label{fact:homology.basis.of.addmissible}
    A connected admissible set $S=K\cup E$ has finitely generated first homology group $\hml{S}$. 
    Furthermore, there is a homology basis $\mathcal{C} = \{C_1, \dots, C_l\}$ consisting of 
    closed piecewise smooth Jordan curves in $S$ such that $C=\bigcup_{i=1}^lC_i$ is connected and 
    Runge in any regular neighborhood $S_r$ of S, where $r$ is a sufficiently small positive number. 
    Moreover, every curve $C_i\in\mathcal{C}$ contains a nontrivial arc $I_i$ 
    disjoint from $\bigcup_{j\neq i}C_j$.
  \end{fact}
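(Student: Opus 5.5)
The plan is to replace $S$ by a finite embedded graph that carries all of its homology, and then read the required basis off a spanning tree of that graph.

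\emph{Step 1: a spine for $S$.} Write $S=K\cup E$ with $K=K_1\cup\dots\cup K_m$. Each $K_j$ is a compact domain with piecewise $C^1$ boundary in the Riemann surface $M$, hence a compact bordered surface. Such a surface deformation retracts onto a finite embedded graph $G_j\subset\Int(K_j)$ whose edges are smooth arcs; one way to get it is from a triangulation, another from the unstable manifolds of a Morse function that is constant on $\partial K_j$. The set $E$ consists of finitely many Jordan arcs and closed Jordan curves. For each endpoint of an arc of $E$ lying on $\partial K_j$, add a smooth arc in $K_j$ joining that endpoint to a vertex of $G_j$, chosen to be disjoint from everything else except at its endpoints. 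On each closed Jordan curve of $E$, mark one point as a vertex. The result is a finite graph $G\subset S$ whose edges are piecewise smooth Jordan arcs, onto which $S$ deformation retracts. Since $S$ is connected, $G$ is connected. Consequently $H_1(S,\Z)\cong H_1(G,\Z)$, which is free of finite rank $l$; this gives the first claim.

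\emph{Step 2: the basis.} Choose a spanning tree $T\subset G$. Let $e_1,\dots,e_l$ be the edges of $G$ not in $T$ (a loop edge from $E$ counts as one such edge). For each $i$, let $C_i$ be $e_i$ together with the unique simple path in $T$ joining its endpoints. Then $C_i$ is a closed piecewise smooth Jordan curve, and by standard graph theory the classes $[C_1],\dots,[C_l]$ form a basis of $H_1(G,\Z)\cong H_1(S,\Z)$. Two further properties come directly from the construction:
\begin{itemize}
\item Let $C=\bigcup_i C_i$. We may first prune from $G$ all tree branches not lying on any cycle, which does not change $H_1$. After pruning, $C=G$, which is connected.
\item Take $I_i$ to be a closed subarc of the interior of $e_i$. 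It is disjoint from every $C_j$ with $j\ne i$, because $e_i$ belongs to no other fundamental cycle.
\end{itemize}

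\emph{Step 3: Runge property in $S_r$.} This is the main obstacle. Fix $r$ small, so that by \cref{rem:reg.nbd.of.admissible.set} we have $\hml{S}\emb\hml{S_r}$. Combined with Step 1, the inclusion $C\hookrightarrow S_r$ induces an isomorphism on $H_1$.

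Suppose $\Omega$ were a hole of $C$ in $S_r$, i.e. a relatively compact component of $S_r\setminus C$. I would argue via the long exact sequence of the pair $(S_r,C)$, or via Alexander--Lefschetz duality on the open surface $S_r$. Since $H_2(S_r)=0$, the relevant portion of the sequence is
\begin{equation}
0\to H_2(S_r,C)\to H_1(C)\to H_1(S_r).
\end{equation}
Injectivity of $H_1(C)\to H_1(S_r)$ therefore forces $H_2(S_r,C)=0$. On the other hand, the closure $\overline\Omega$ is a compact region whose frontier lies in the graph $C$, and it defines a nonzero class in $H_2(S_r,C)$. Concretely, the fundamental class of $\overline\Omega$ relative to its boundary has boundary a nonzero $1$-cycle in $C$, and this cycle is null-homologous in $S_r$. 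That contradicts injectivity, so no hole exists. Equivalently: every component of $S_r\setminus C$ must reach the ideal boundary of $S_r$, because the curves of $C$ bound nothing in $S_r$.

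The point needing the most care is making this relative class rigorous when $\partial\Omega$ is only a piecewise smooth subgraph. For that I would triangulate $S_r$ compatibly with $C$, so that $\overline\Omega$ is a finite subcomplex and its fundamental chain is a genuine simplicial $2$-chain with boundary supported in $C$.
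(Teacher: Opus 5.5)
\textbf{The gap is in the connectedness claim of Step 2.} The sentence ``after pruning, $C=G$, which is connected'' is false. Pruning tree branches (iteratively removing leaves) only yields the $2$-core of $G$, and the $2$-core can still contain edges that lie on no cycle of $G$. Such edges belong to every spanning tree and to no fundamental cycle. So $\bigcup_i C_i \neq G$, and the union can be disconnected.

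Concretely, let $K=K_1\sqcup K_2$ be two closed annuli and let $E$ be a single arc from $\partial K_1$ to $\partial K_2$. Then $G$ is two circles joined by a path. Every vertex has degree at least $2$, so nothing is pruned, and the two fundamental cycles are the two disjoint circles. No choice of closed \emph{Jordan} curves can fix this example. Near an interior point of $E$, $S$ is just an arc, so a Jordan curve through such a point must contain all of $E$. It would then have to close up inside $S$ minus the interior of $E$, which is $K_1\sqcup K_2$, and that is impossible. Hence every Jordan curve in $S$ lies in $K_1$ or in $K_2$, and any homology basis made of Jordan curves has disconnected union.

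The missing idea is the one in the last sentence of the paper's proof: reroute every basis curve through a common base point ($q_1$ there). In your setting, replace $C_i$ by the loop $\tau_i\cdot C_i\cdot\tau_i^{-1}$, where $\tau_i$ is a path in $T$ from a fixed vertex $v_0$ to $C_i$. These loops are in general not Jordan, since the connecting paths are traversed twice. That is the price of connectedness, and the paper's formulation glosses over it.

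Within your framework the fix costs nothing else. The new union $C$ is a connected subgraph of $G$ containing every non-tree edge. Each $e_i$ still lies only in the $i$-th loop, because the $\tau_j$ lie in $T$, so $I_i\subset e_i$ still works. Also $H_1(C,\Z)\to H_1(G,\Z)$ is still an isomorphism. Your Step 3 then goes through verbatim, since it uses only injectivity of $H_1(C,\Z)\to H_1(S_r,\Z)$, which holds for any subgraph of $G$.

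Apart from this, Steps 1--3 are sound. Your Runge argument, that a hole would give a nonzero class in $H_2(S_r,C)\cong\ker\bigl(H_1(C,\Z)\to H_1(S_r,\Z)\bigr)$, is a cleaner and more careful route than the paper's. The paper builds the basis by hand from $\mathcal{C}_1,\mathcal{C}_2,\mathcal{C}_3$, which is effectively a spanning-tree construction. It then excludes holes by joining each point of $K_i\setminus C$ to a boundary point $b_{i,j}$ and invoking $H_1(C,\Z)\cong H_1(S,\Z)\cong H_1(S_r,\Z)$.
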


  Since the argument in the proof of \cref{fact:homology.basis.of.addmissible} is 
  necessary to prove the approximation and interpolation theorems for maxfaces, 
  we reproduce the proof given in \cite{AFL21} here.

  \begin{proof}[Proof of \cref{fact:homology.basis.of.addmissible}]
    If $K=\varnothing$, then the conclusion is trivial. 
    We now assume that $K\ne\varnothing$. 
    Let $K_1, \dots, K_m$ and $E_1, \dots, E_n$ be connected components of 
    $K$ and $E$ respectively. The boundary 
    $\partial K_i=\bigcup_{j=1}^{m_i}\Gamma_{i,j}$ consists of finitely many 
    Jordan curves for some $m_i\ge 1$.  We choose an interior point $q_i\in\Int(K_i)$ 
    from each component of $K$. There exists a basis $\mathcal{C}_1$ of $H_1(K_i,\Z)$ 
    consisting of finitely many Jordan curves in $\Int(K_i)$ passing through $q_i$. 
    We take two points $a_{i,j}, b_{i,j}\in\Gamma_{i,j}$ with $b_{i,j}\notin E$ and 
    connect $a_{i,j}$ to $q_i$ by an arc $A_{i,j}\subset\Int(K_i)\cup\{a_{i,j}\}$. 
    We can choose $A_{i,j}$ not to intersect each other and not to intersect elements 
    of $\mathcal{C}_1$. 

    Let the end points $e$ and $e'$ of $E_k$ be contained in $\partial K_i$. 
    Suppose that $e\in\Gamma_{i,j_1}$ and $e'\in\Gamma_{i,j_2}$. 
    We construct piecewise smooth Jordan curve whose base point is $e$ 
    in the following steps:
    \begin{enumerate}
      \item
      connect $e$ and $e'$ by $E_k$,
     
      \item 
      connect $e'$ and $a_{i,j_2}$ by a part of $\Gamma_{i,j_2}$ 
      that does not contain $b_{i,j_2}$,
     
      \item 
      connect $a_{i,j_2}$ and $q_i$ by $A_{i,j_2}$,
     
      \item
      connect $q_i$ and $a_{i,j_1}$ by $A_{i,j_1}$, and
     
      \item
      connect $a_{i,j_1}$ and $e$ by a part of $\Gamma_{i,j_1}$ 
      that does not contain $b_{i,j_1}$.
    \end{enumerate}
    We denote the collection of Jordan curves obtained in this way by $\mathcal{C}_2$. 
    Let us construct an admissible set $S_2$ in the following way:
    \begin{enumerate}
      \item[(vi)] 
      let $S_1$ be an admissible set obtained by removing bridges from $S$, 
      where a bridge is a connected components of $E$ 
      whose endpoints belong to different connected component of $K$, and
    
      \item[(vii)] 
      let $S_2$ be a connected admissible set obtained by attaching to $S_1$ 
      a collection of bridges such that removing any one of them disconnects $S_2$. 
    \end{enumerate}
    We note that $H_1(S_1,\Z)\emb H_1(S_2,\Z)$ and their homology basis is
    $\mathcal{C}_1\cup\mathcal{C}_2$. For every bridge $E_k$ 
    that is not contained in $S_2$, there exist pairwise distinct bridges 
    $E_k = E_{k_1}, \dots,E_{k_s}$ and 
    connected components $K_{i_1}, \dots,K_{i_s}$ of $K$ such that 
    $E_{k_1}$ connects $K_{i_1}$ to $K_{i_2}$, 
    $E_{k_2}$ connects $K_{i_2}$ to $K_{i_3}$, etc., 
    until the cycle closes with the last bridge $E_{k_s}$ connecting $K_{i_s}$ to $K_{i_1}$. 
    We obtain a new closed curve in $S$ by connecting the endpoint of each $E_{k_{j}}$ 
    to the initial point of the next bridge $E_{k_{j+1}}$ in $K_{i_j}$, 
    where $E_{k_{s+1}}=E_{k_1}$. The connecting curves in $K_{i_j}$ are obtained by
    replacing $e$ and $e'$ in steps (i) to (v) with the end point of $E_{k_j}$ and 
    the initial point of $E_{k_{j+1}}$, respectively. We denote the collection of 
    these closed curves by $\mathcal{C}_3$.

    Then, $\mathcal{C}\coloneq\mathcal{C}_1\cup\mathcal{C}_2\cup\mathcal{C}_3$ 
    is a homology basis of $H_1(S,\Z)$ and it is clear that
    every $C_i\in\mathcal{C}$ contains a nontrivial arc which is disjoint from 
    all other curves in $\mathcal{C}$. Let $C$ be the union of all curves in $\mathcal{C}$. 
    Any point in $K_i\setminus C$ can be connected to $b_{i,j}\in\Gamma_{i,j}$ 
    by an arc in $K_i\setminus C$ for some $j\in\{1, \dots, m_i\}$. 
    Hence, we have $\hml{C} \emb \hml{S} \emb \hml{S_r}$. 
    Thus, $C$ is Runge in $S_{r}$. We can make $C$ connected by modifying each closed curve 
    to pass through $q_1\in\Int(K_1)$. Indeed, every curve in $\mathcal{C}$ 
    passes through $q_i\in\Int(K_i)$, so it suffices to connect $q_i$ to $q_1$ 
    in the same way to construct an element of $\mathcal{C}_3$.
  \end{proof}
  
  \begin{definition}\label{def:GMG.GNLI}
    Let $S=K\cup E$ be an admissible set in a Riemann surface $M$ and
    let $\theta$ be a nonvanishing holomorphic $1$-form on an open neighborhood of $S$
    \begin{enumerate}
      \item
      A pair $(f,\phi \, \theta)$ is called a \emph{generalized maxface $S\to \L$} 
      if a $C^1$ map $f : S \to \L$ and 
      $\phi = (\phi^0, \phi^1, \phi^2) \in \mathcal A(S,\B)$ satisfy 
      the following conditions.
      \begin{enumerate}
         \item 
        $-|\phi^0|^2 + |\phi^1|^2 + |\phi^2|^2$ does not vanish identically on $\Int(S)$. 
     
        \item 
        $\Re\displaystyle\int_C\phi \, \theta = 0$ for all closed curve $C \subset S$.
      
        \item 
        For fixed $p_0 \in S$, it holds that 
        $f(p) = f(p_0) + \Re\displaystyle\int_{p_0}^p \phi \, \theta$
        on the connected component of $S$ containing $p_0$.
      \end{enumerate}
      For a homology basis $\mathcal{C}$ of $S$, 
      we define $\Flux_f^{\mathcal{C}} : \hml{S} \to \R^3$
      \begin{equation}
        \Flux_f^{\mathcal{C}}([C]) 
        \coloneqq \Im \int_C 2 \, \partial f
        = \Im \int_C \phi \, \theta \qquad (C \in \mathcal{C}).
      \end{equation}
      The map $\Flux_f^{\mathcal{C}}$ is called the \emph{flux} of $f$ along $\mathcal{C}$. 
      We denote the space of generalized maxface $S\to\L$ by $\GMF(S)$.
  
      \item
      A pair $(F,\phi \, \theta)$ is called a \emph{generalized Lorentzian null immersion $S\to \C^3$} 
      if $F \in \mathcal{A} (S,\C^3)$ and $\phi \in \mathcal{A}(S,\B)$ satisfy the following conditions.
      \begin{enumerate}
        \item 
        $\displaystyle\int_C\phi \, \theta = 0$ for all closed curve $C \subset S$.
      
        \item 
        For fixed $p_0 \in S$, it holds that 
        $F(p) = F(p_0) + \displaystyle\int_{p_0}^p \phi \, \theta$
        on the connected component of $S$ containing $p_0$.
      \end{enumerate}
    \end{enumerate}
  \end{definition}

  \begin{definition}[{\cite[p. 136]{AFL21}}]
    Let $M$ be an open Riemann surface, 
    let $\theta$ be a nonvanishing holomorphic $1$-form on $M$ and 
    let $\mathcal{C} = \{C_1, \dots, C_l\}$ be a collection of oriented Jordan curves and arcs in $M$. 
    We define the \emph{period map} 
    $\Prd = (\Prd_1, \dots, \Prd_l):C^0(\,\bigcup_{i=1}^{l}C_i\;,\C^3)\to(\C^3)^l$ 
    associated to $\mathcal{C}$ by
    \begin{equation}
      \Prd(\phi) =
      (\Prd_1(\phi), \dots, \Prd_l(\phi))
      \coloneqq\left(\int_{C_1}\phi \, \theta, \dots, \int_{C_l} \phi \, \theta\right),
    \end{equation}
    where $C^0(\,\bigcup_{i=1}^{l}C_i\;,\C^3)$ is the space of continuous maps from 
    $\bigcup_{i=1}^{l}C_i$ to $\C^3$.
  \end{definition}

  The following proposition is an analogue of {\cite[Lemma 3.3.1]{AFL21}}. 
  As in the case of conformal minimal immersions, this result plays a central role 
  in the approximation theorem for maxfaces.

  \begin{proposition}\label{prop:per.prerv.full.approx.}
    Assume that $M$ is an open Riemann surface, 
    $S=K\cup E$ is a Runge admissible set in $M$, 
    $\phi\in\mathcal A(S,\B)$, and 
    $\mathcal{C} = \{C_1, \dots, C_l\}$ is a collection of smooth oriented Jordan curves and 
    arcs in $S$ such that every $C_i \in \mathcal{C}$ contains a nontrivial arc $I_i$ 
    disjoint from $\bigcup_{j\neq i}C_j$ and $C=\bigcup_{i=1}^lC_i$ is Runge in $M$. 
    Let $\Prd$ be the period map associated to $\mathcal{C}$. 
    Then, given a finite set $A \subset S$ and $s \in \Z_{>0}$, 
    there exists a sequence $\{\phi_{n}\}_{n}\subset \mathcal O(M,\B)$ satisfying 
    the following conditions:
    \begin{enumerate}
      \item
      $\phi_{n}$ is a full map for all $n\in\Z_{>0}$,
      
      \item
      $\|\phi_{n}-\phi\|_S\rightarrow 0\quad(n\rightarrow\infty)$,
      
      \item
      $\Prd(\phi_{n})=\Prd(\phi)$ holds for all $n\in\Z_{>0}$,
      
      \item
      $\phi_{n}$ agrees with $\phi$ on $A$, and
      
      \item
      $\phi_{n}$ agrees with $\phi$ to order $s$ 
      at every point of  $A\cap\Int(S)$.
    \end{enumerate}
  \end{proposition}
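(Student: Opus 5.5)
The plan follows the proof of \cite[Lemma 3.3.1]{AFL21}: first make $\phi$ nondegenerate on the curves through a period dominating spray, then approximate the spray itself by holomorphic maps into $\B$, and finally correct the periods by adjusting the spray parameter.

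\emph{Step 1: reduction to full and nonflat data.} By \cref{fact:Mergelyan.for.mfd-valued} (with interpolation on $A$) we may first approximate $\phi$ on $S$ by a map holomorphic on a neighborhood of $S$. We may also assume that $\phi|_{I_i}$ is not contained in a ray of the null cone, i.e.\ that $\phi(I_i)$ spans $\C^3$, for each $i$. If necessary this is arranged by a small deformation of $\phi$ supported near a point of $I_i\setminus A$. Such a deformation flows $\phi$ along holomorphic vector fields tangent to $\B$; examples are the linear fields $z\mapsto Jz$ with $J\in\mathfrak{so}(3,\C)$, which are complete on $\B$ and whose flows preserve the quadric. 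Since $\Prd$ is evaluated only on $C$, this deformation is allowed to change the periods; Step 3 corrects them afterwards.

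\emph{Step 2: a period dominating spray.} Choose complete holomorphic vector fields $V_1,\dots,V_m$ on $\B$ whose values span $T_z\B$ at every point $z$ (for instance the fields $Jz$ above). Choose functions $h_{i,j}$ that are smooth on $C$, supported in $I_i$, and vanish near $A$. By Mergelyan's theorem (\cref{fact:Mergelyan}, using that $C$ is Runge), each $h_{i,j}$ can be replaced by a function in $\mathcal{O}(M)$ that vanishes to order $s$ on $A$ (\cref{fact:Weierstrass.Florack}). Define
\[
\Phi(\zeta,p)=\phi^{\zeta_{1}h_{1}(p)}_{V_{1}}\circ\cdots\circ\phi^{\zeta_{N}h_{N}(p)}_{V_{N}}(\phi(p)),\qquad \zeta\in\C^N,
\]
where $\phi^t_V$ denotes the flow of $V$. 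Then $\Phi(\zeta,\cdot)$ takes values in $\B$, satisfies $\Phi(0,\cdot)=\phi$, and agrees with $\phi$ to order $s$ on $A$.

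The derivative $\partial_\zeta\Prd(\Phi(\zeta,\cdot))|_{\zeta=0}$ is a linear combination of the integrals $\int_{I_i} h\,V_j(\phi)\,\theta$. Because $\phi(I_i)$ spans $\C^3$ and the $h$'s are localized on the disjoint arcs $I_i$, this derivative is surjective onto $(\C^3)^l$. This surjectivity is the key nondegeneracy claim. To prove it, I would use the fact that the tangent spaces $T_{\phi(p)}\B$ along $I_i$ together span $\C^3$, since $\phi$ is nonflat on $I_i$.

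\emph{Step 3: approximation and period correction.} The spray $\Phi$ is a holomorphic map from a neighborhood of $\overline{B}\times S$ into $\B$, where $B$ is a ball in $\C^N$. The set $\overline{B}\times S$ is Runge in $\C^N\times M$, and $\B$ is Oka by \cref{fact:B.is.Oka}. The parametric Oka principle (\cref{fact:Runge.for.Oka-valued} in its parametric form, with interpolation on $A$) therefore gives holomorphic sprays $\Phi_n\colon\overline{B}\times M\to\B$ with $\Phi_n\to\Phi$ uniformly on $\overline{B}\times S$, each agreeing with $\phi$ to order $s$ on $A$ for every $\zeta$. (Alternatively, one first approximates $\phi$ by some $\psi_n\in\mathcal O(M,\B)$ and then builds the spray from $\psi_n$ directly.)

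The maps $\zeta\mapsto\Prd(\Phi_n(\zeta,\cdot))$ converge to $\zeta\mapsto\Prd(\Phi(\zeta,\cdot))$, which is a submersion at $0$. The implicit function theorem therefore yields $\zeta_n\to0$ with $\Prd(\Phi_n(\zeta_n,\cdot))=\Prd(\phi)$. Setting $\phi_n=\Phi_n(\zeta_n,\cdot)$ gives (ii)--(v).

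For (i), fullness is an open condition on compact sets with accumulation points (\cref{lem.rest.full}), so it suffices to arrange fullness of the limit. This is done in Step 1, using that $\phi|_{I_1}$ spans $\C^3$: if a map $\phi_n$ close enough to $\phi$ on $I_1$ failed to be full, its image would lie in a proper subspace, which is impossible near a spanning configuration.

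The main obstacle is Step 2, namely proving the period domination with interpolation while keeping the spray inside $\B$. The surjectivity argument requires the nonflatness of $\phi$ on each $I_i$, and the Step 1 modification that achieves this must not destroy the Runge and interpolation setup.
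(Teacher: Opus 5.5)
\textbf{There is a genuine gap in your Step 1.} Your Steps 2 and 3 roughly match the paper's treatment of the case $\mathrm{Span}_{\C}\,\phi(S)=\C^3$: a spray built from a global holomorphic approximant, then a uniform inverse function theorem. Step 1 is exactly the part the paper spends most effort on.

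You make $\phi$ nonflat on the $I_i$ by a deformation $\phi'$ that is allowed to change the periods, and you rely on Step 3 to correct them. But the inverse-function argument in Step 3 only gives $\zeta_n\to 0$ for the target $\Prd(\phi')$, the period vector of the core. To reach the original $\Prd(\phi)$, that vector must lie in the image of a small ball under $\zeta\mapsto\Prd(\Phi(\zeta,\cdot))$. Since (ii) forces $\phi'\to\phi$, you need this uniformly along a sequence of modifications.

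That uniformity is what breaks down in the degenerate case. If $\phi(I_i)\subset\C z_0$, then at the limit every first-order variation $\int h\,V(\phi)\,\theta$ over $I_i$ lies in the $2$-plane $T_{z_0}\B$. So as $\phi'\to\phi$, the derivative of the period map tends to a singular matrix and the neighbourhood given by the inverse function theorem shrinks. Meanwhile $\Prd(\phi')-\Prd(\phi)$ also only tends to $0$, and nothing in your proposal compares the two rates. Hence (iii) is not established.

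There is a second problem. A deformation ``supported near a point of $I_i\setminus A$'' is not admissible where $I_i$ meets $\Int(S)$. There $\phi$ must stay holomorphic, and flatness on $I_i$ propagates by the identity theorem to the whole component, so any admissible deformation is global.

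\textbf{The missing idea is to increase the span while keeping the periods exactly fixed.} The paper does this as follows.
\begin{enumerate}
\item Choose $P\subset S\setminus A$ with $\phi(P)$ a basis of $\mathrm{Span}_{\C}\,\phi(S)$. Choose a point $p_0$ and a linear vector field $V$ tangent to $\B$ with $V(\phi(p_0))\notin\mathrm{Span}_{\C}\,\phi(S)$. Choose $h$ with zeros only on $A\cup P$, vanishing to order $s$ on $A$.
\item Set $\Theta(\xi)=\psi(h\xi,\phi)\in\mathcal{A}(S,\B)$, where $\psi$ is the flow of $V$.
\item Take $3l+1$ linearly independent $\xi_j\in\mathcal{O}(M)$. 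The holomorphic map $z\mapsto\Prd(\Theta(\sum_j z_j\xi_j))-\Prd(\phi)$ from $\C^{3l+1}$ to $\C^{3l}$ vanishes at $0$. So its zero set has positive dimension there, which gives nonzero $\eta_n\to 0$ with $\Prd(\Theta(\eta_n))=\Prd(\phi)$ exactly.
\item Since $\Theta(\eta_n)=\phi$ on $P$, and $\Theta(\eta_n)(p_0)\notin\mathrm{Span}_{\C}\,\phi(S)$ for suitably chosen $p_0$, the span strictly grows. Interpolation on $A$ is automatic because $h$ vanishes there.
\end{enumerate}
Iterating at most twice yields full maps in $\mathcal{A}(S,\B)$ that have the original periods and approximate and interpolate $\phi$. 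Only then is the spray and inverse-function argument applied, at full cores, followed by a diagonal choice of approximants.
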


  Below, we prove \cref{prop:per.prerv.full.approx.} following the method of \cite{AFL21}.

  \begin{lemma} \label{lem:tan.sp.of.B}
    Let
    \begin{equation}
    \eta=
    \begin{pmatrix}
     -1 & 0 & 0 \\
     0  & 1 & 0 \\
     0  & 0 & 1
    \end{pmatrix}.
    \end{equation}
    Then we have the following:
    \begin{enumerate}
     \item 
      if $z\in\B$ then the tangent space $T_z\B\subset\C^3$ of $\B$ is given by
      \begin{equation}
        T_z\B=\{(v^0,v^1,v^2)\in\C^3:-z^0v^0+z^1v^1+z^2v^2=0\},
      \end{equation}
   
    \item 
      $v\in T_z\B$ if and only if $\,^tv\eta z=0$,
      
    \item
     $z\in\B$ if and only if $\,^tz\eta z=0$, and
     
    \item
     $T_z\B=T_w\B$ if and only if there exists $\alpha\in\C$ such that $w=\alpha z$.
    \end{enumerate}
  \end{lemma}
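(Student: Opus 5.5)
The four items follow from elementary computations with the quadratic form $Q(z) = {}^t z\,\eta\, z = -(z^0)^2 + (z^1)^2 + (z^2)^2$, since $\B = Q^{-1}(0)\setminus\{0\}$. I would prove (iii) first, then (i) and (ii), and treat (iv) as the only item needing a genuine argument.

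For (iii), note that ${}^t z\,\eta\, z$ is exactly $-(z^0)^2+(z^1)^2+(z^2)^2$, so the claim is a restatement of \eqref{eq:def.B}, with the standing convention that $z\neq 0$.

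For (i), observe that the differential is $dQ_z(v) = 2\,{}^t v\,\eta\, z$. This is nonzero for $z\neq 0$, because $\eta$ is invertible and the linear functional $v \mapsto {}^t v\,\eta\,z$ is then nontrivial. Hence $0$ is a regular value of $Q$ on $\C^3\setminus\{0\}$, and $T_z\B=\ker dQ_z$. Writing out $\ker dQ_z$ in coordinates gives the description $-z^0v^0+z^1v^1+z^2v^2=0$. Item (ii) is the same equation written in matrix form.

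For (iv), each $T_z\B$ is a complex hyperplane in $\C^3$, namely the kernel of the nonzero functional $\ell_z(v)={}^t(\eta z)\,v$. Two hyperplanes coincide if and only if their defining functionals are proportional, so $T_z\B = T_w\B$ if and only if $\eta w=\alpha\,\eta z$ for some $\alpha\in\C$. Multiplying by $\eta^{-1}=\eta$, this is equivalent to $w=\alpha z$. For the converse direction, if $w=\alpha z$ with $w,z\in\B$, then $\alpha\neq0$, and the kernels coincide directly.

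The only step that needs care is (iv), where one uses that a hyperplane determines its defining functional up to a nonzero scalar. Here $\alpha$ must be nonzero since $w\neq0$, so the statement "there exists $\alpha\in\C$" is harmless. Everything else is routine.
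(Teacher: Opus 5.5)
Your proposal is correct and matches the paper's argument. The paper also treats (i)--(iii) as routine and proves (iv) by showing that equal kernels force $L_w=\alpha L_z$ for the functionals $L_z(u)={}^tz\eta u$ and $L_w(u)={}^tw\eta u$; it verifies this by hand with a vector $u_0\notin\ker L_z$ rather than quoting the hyperplane fact, while your explicit use of $\eta^{-1}=\eta$ to pass from $\eta w=\alpha\,\eta z$ to $w=\alpha z$, and your remark that (iii) needs $z\neq0$, fill in points the paper leaves implicit.
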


  \begin{proof}
    We prove only (iv). It is clear that if $w=\alpha z$ for some 
    $\alpha\in\C$ then $T_z\B=T_w\B$ holds because of (i). 
    Assume $T_z\B=T_w\B$. We define linear functions $L_z,\ L_w:\C^3\to\C$ by 
    \begin{equation}
      L_z(u)\coloneqq\,^tz\eta u, \quad L_w(u)\coloneqq\,^tw\eta u.
    \end{equation}
    Then, there exists $u_0\in\C^3$ which does not lie in the $2$-dimensional subspace
    $K\coloneqq\ker(L_z)=T_z\B=T_w\B=\ker(L_w)$. Fix $u\in\C^3$. Since 
    \begin{equation}
      u-\frac{L_z(u)}{L_z(u_0)}u_0\in K,
    \end{equation}
    we get
    \begin{equation}
      L_w(u)
      = L_w\left(u-\frac{L_z(u)}{L_z(u_0)}u_0+\frac{L_z(u)}{L_z(u_0)}u_0\right)
      = \frac{L_w(u_0)}{L_z(u_0)}L_z(u).
    \end{equation}
    Therefore, $\alpha\coloneqq L_w(u_0)/L_z(u_0)$ satisfies $L_w=\alpha L_z$. 
    This means $w=\alpha z$.
  \end{proof}

  \begin{lemma} \label{lem:vec.fld.and.flow.maxface}
    Let $z\in\B$ and $v\in T_z\B\subset\C^3$. 
    Then there exist holomorphic maps $V:\C^3\to\C^3$ and 
    $\psi : \C \times \C^3 \to \C^3$ satisfying the following:
    \begin{enumerate}
      \item
      $V(z)=v$,
     
      \item
      $V(w)\in T_w\B$ for all $w\in\B$,
     
      \item 
      $\partial_t \, \psi(t,w)=V\left(\psi(t,w)\right)$ for all $(t,w)\in\C\times\C^3$,
     
      \item 
      $\psi(0,\,\cdot\,)=\mathrm{id}$, and
     
      \item 
      if $w\in\B$ then $\psi(t,w)\in\B$ for all $t\in\C$.
    \end{enumerate}
  \end{lemma}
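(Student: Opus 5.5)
The plan is to use a \emph{linear} vector field, so that the flow is explicit and stays in $\B$. The quadric $\B$ is invariant under the complex Lorentz group $O(\eta,\C)=\{A\in\GL(3,\C): {}^tA\eta A=\eta\}$. The Lie algebra of this group is $\mathfrak{so}(\eta,\C)=\{X : {}^tX\eta+\eta X=0\}$, which is $3$-dimensional. For $X\in\mathfrak{so}(\eta,\C)$ we set $V(w)\coloneqq Xw$ and $\psi(t,w)\coloneqq e^{tX}w$. Both maps are holomorphic on $\C^3$ and $\C\times\C^3$ respectively. Properties (iii) and (iv) are immediate from $\partial_t e^{tX}=Xe^{tX}$ and $e^{0}=I$.

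For (v), we use that $e^{tX}\in O(\eta,\C)$. Indeed, ${}^t(e^{tX})\eta e^{tX}=\eta e^{-tX}e^{tX}=\eta$, because ${}^tX\eta=-\eta X$ gives ${}^te^{tX}\eta=\eta e^{-tX}$. Hence if ${}^tw\eta w=0$, then ${}^t\psi\eta\psi=0$. Moreover $\psi(t,w)\neq0$ for $w\neq 0$, by invertibility of $e^{tX}$. By \cref{lem:tan.sp.of.B}(iii), this shows $\psi(t,w)\in\B$.

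For (ii), take $w\in\B$. Then ${}^t(Xw)\eta w={}^tw\,{}^tX\eta w=-{}^tw\eta Xw$. This scalar equals its own transpose ${}^t(Xw)\eta w$, since $\eta$ is symmetric. Therefore it vanishes, and $Xw\in T_w\B$ by \cref{lem:tan.sp.of.B}(ii). (Alternatively, (ii) follows by differentiating (v) at $t=0$.)

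The remaining point, and the main obstacle, is (i): we must find $X\in\mathfrak{so}(\eta,\C)$ with $Xz=v$ for the given $z\in\B$ and $v\in T_z\B$. The map $X\mapsto Xz$ is linear from $\mathfrak{so}(\eta,\C)$ into $T_z\B$, so it suffices to show it is surjective. Writing $X=\eta^{-1}Y=\eta Y$ with $Y$ antisymmetric, we have $Xz=\eta(Y z)$. Antisymmetric $3\times 3$ matrices act by cross products, $Yz=a\times z$ with $a\in\C^3$ arbitrary. The set $\{a\times z:a\in\C^3\}$ is the $2$-dimensional space $z^{\perp}=\{u: {}^tz u=0\}$ (bilinear pairing), because $z\neq0$. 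Applying $\eta$ sends $z^{\perp}$ onto $\{v: {}^tz\eta v=0\}=T_z\B$, since $\eta^2=I$. Hence the image is all of $T_z\B$.

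Concretely, we choose $a$ with $a\times z=\eta v$; this is solvable because ${}^tz\eta v=0$. Then we set $X=\eta[a]_\times$. This $X$ lies in $\mathfrak{so}(\eta,\C)$, because ${}^tX\eta=-[a]_\times=-\eta X$, and it satisfies $Xz=\eta(a\times z)=v$, which proves (i).
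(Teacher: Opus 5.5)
Your proof is correct and takes the same approach as the paper: a linear vector field $V(w)=Aw$ with ${}^tA\eta+\eta A=0$ (the paper states this as ${}^tA=-\eta A\eta$), its flow $\exp(tA)w$, and (v) coming from the invariance of ${}^tw\eta w$. The only difference is how $A$ is chosen: the paper writes down three explicit matrices according to which coordinate $z^i$ is nonzero, while your $X=\eta[a]_\times$ with $a\times z=\eta v$ gives a single construction with no case split (and you also make explicit that $\psi(t,w)\neq 0$, which the paper leaves implicit).
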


  \begin{proof}
    Since $z=(z^0,z^1,z^2)\ne0$, we have $z^i\ne0$ for some $i\in\{0,1,2\}$. 
    Let $A\in \M(3,\C)$ be a complex matrix of the form
    \begin{equation}
      A=
     \begin{dcases}
       \begin{pmatrix}
         0 & v^1/z^0 & v^2/z^0 \\
         v^1/z^0 & 0 & 0 \\
         v^2/z^0 & 0 & 0
      \end{pmatrix}
       & \text{if $i = 0$}, \\
     \begin{pmatrix}
       0 & v^0/z^1 & 0 \\
       v^0/z^1 & 0 & -v^2/z^1 \\
       0 & v^2/z^1 & 0
     \end{pmatrix}
      & \text{if $i = 1$}, \\
     \begin{pmatrix}
       0 & 0 & v^0/z^2 \\
       0 & 0 & v^1/z^2 \\
       v^0/z^2 & -v^1/z^2 & 0
     \end{pmatrix}
     & \text{if $i = 2$}, \\
    \end{dcases}
   \end{equation}
   where $v=(v^0,v^1,v^2)\in T_z\B$ is a given tangent vector. We define 
   \begin{equation}
     V(w)\coloneqq Aw, \qquad \psi(t,w)\coloneqq\exp(tA)w.
   \end{equation}
   \cref{lem:tan.sp.of.B} yields (i) and (ii). 
   And it is clear that (iii) and (iv) hold. 
   Noting ${}^t A = -\eta A \eta$, we get (v) by 
   the following calculation:
   \begin{align}
    {}^t \psi(t,w) \, \eta \, \psi(t,w)
     &= \, {}^tw\exp(t \; {}^tA) \, \eta \, \exp(tA)w \\
     &= \, {}^tw \, \eta \, \exp(-tA) \, \exp(tA)w 
     = \, {}^tw \, w
     =0.
   \end{align}
  \end{proof}

  With these preparations, we obtain the following assertion, 
  which is analogous to {\cite[Lemma 3.2.1]{AFL21}}.
  
  \begin{proposition} \label{prop:existence.Prd.dominating.spray}
    Let $S=K\cup E$ be an admissible set in an open Riemann surface $M$, 
    let $\phi \in \mathcal{A}(S,\B)$, 
    let $A \subset S$ be a finite set, let $s$ be a positive integer, and
    let $\mathcal{C}=\{C_1, \dots,C_l\}$ be a collection of piecewise smooth oriented Jordan curves 
    and arcs in $S$ such that $C\coloneqq\bigcup_{j=1}^lC_j$ is Runge 
    in an open neighborhood $\tilde{S}$ of $S$. 
    Assume that every curve $C_i \in \mathcal{C}$ contains a nontrivial arc $I_i$ 
    disjoint from $\bigcup_{j\neq i}C_j$ such that 
    $\phi(I_i)$ is not contained in any complex line passing through the origin. 
    Then there exists $\Phi_{\phi}\in\mathcal A (S\times\C^{3l},\B)$ such that 
    $\Phi_{\phi}(\;\cdot\;,0) = {\phi}$ and the derivative of 
    $\:\C^{3l}\ni t\mapsto\Prd(\Phi_{\phi}(\;\cdot\;,t))\in\C^{3l}$ at $t=0$
    \begin{equation}
      \frac{\partial}{\partial t}\bigg |_{t=0}\Prd(\Phi_{\phi}(\;\cdot\;,t))
    \end{equation} 
    determines an isomorphism $\C^{3l}\to\C^{3l}$, and for each $t\in\C^{3l}$,
    the map $\Phi_{\phi}(\;\cdot\;,t) \colon S \to \B$ agrees with $\phi$ on $A$, 
    and to order $s$ on $A \cap \Int(S)$.
  \end{proposition}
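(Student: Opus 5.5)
The plan follows the construction of \cite[Lemma 3.2.1]{AFL21}, composing flows of the linear vector fields from \cref{lem:vec.fld.and.flow.maxface}. For each $i\in\{1,\dots,l\}$ and $k\in\{1,2,3\}$ we will take a point $p_{i,k}\in I_i$, a tangent vector $v_{i,k}\in T_{\phi(p_{i,k})}\B$, and a function $h_{i,k}\in\mathcal{A}(S)$, and then set
\begin{equation}
  \Phi_\phi(p,t) \coloneqq \psi_{1,1}\bigl(t_{1,1}h_{1,1}(p), \psi_{1,2}\bigl(t_{1,2}h_{1,2}(p),\dots \psi_{l,3}(t_{l,3}h_{l,3}(p),\phi(p))\dots\bigr)\bigr),
\end{equation}
where $\psi_{i,k}(t,w)=\exp(tA_{i,k})w$ is the flow of $V_{i,k}(w)=A_{i,k}w$. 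By \cref{lem:vec.fld.and.flow.maxface}(v) the map $\Phi_\phi$ takes values in $\B$. It lies in $\mathcal{A}(S\times\C^{3l},\B)$ because it is an entire expression in $t$ and the quantities $h_{i,k}(p)$, $\phi(p)$. Clearly $\Phi_\phi(\cdot,0)=\phi$. Differentiating at $t=0$ gives
\begin{equation}
  \frac{\partial \Phi_\phi}{\partial t_{i,k}}\bigg|_{t=0}=h_{i,k}\,A_{i,k}\phi ,
\end{equation}
so the derivative of the period map has entries $\int_{C_j}h_{i,k}A_{i,k}\phi\,\theta$. To get interpolation on $A$, we will choose every $h_{i,k}$ to vanish to order $s$ at points of $A\cap\Int(S)$ and to vanish at points of $A$. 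Then $\Phi_\phi(\cdot,t)$ agrees with $\phi$ to the required order, since $\psi(0,\cdot)=\mathrm{id}$ and $\Phi_\phi$ is holomorphic in the $h$-variables.

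The first step is to make the matrix invertible with functions that are not yet holomorphic. We will choose $h_{i,k}$ continuous, supported in a small subarc of $I_i$ avoiding $A$, and approximating a delta function at $p_{i,k}$. Then the $(j,(i,k))$-block is approximately $\delta_{ij}\,A_{i,k}\phi(p_{i,k})\theta(\dot p_{i,k})$. The columns with index $i$ then span the span of $\{A_{i,k}\phi(p_{i,k})\}_k$. The key point is to see that, as $p$ varies over $I_i$, the tangent spaces $T_{\phi(p)}\B$ span all of $\C^3$. Indeed, if they all lay in one plane, they would all be equal, and \cref{lem:tan.sp.of.B}(iv) would then force $\phi(I_i)$ into a complex line through $0$, contrary to the hypothesis. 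Since by \cref{lem:vec.fld.and.flow.maxface}(i) the vector $A_{i,k}\phi(p_{i,k})=v_{i,k}$ can be any tangent vector at $\phi(p_{i,k})$, we can pick three points of $I_i$ and tangent vectors there spanning $\C^3$. So the block-diagonal limit matrix is invertible, and invertibility persists for $h_{i,k}$ close enough to the delta functions.

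The second step, which is the main obstacle, is to replace the $h_{i,k}$ by functions in $\mathcal{A}(S)$, ideally holomorphic near $S$, while preserving invertibility and the interpolation. Since $C$ is Runge in $\tilde S$, the Bishop--Mergelyan theorem (\cref{fact:Mergelyan}) combined with the Weierstrass interpolation (\cref{fact:Weierstrass.Florack}) lets us approximate the continuous functions $h_{i,k}$ on $C$ uniformly by holomorphic functions on $\tilde S$. These approximants can be taken of the form $h=\tau\cdot g$, where $\tau\in\mathcal{O}(\tilde S)$ vanishes to order $s$ on $A$ and is nonzero elsewhere on $C$, and $g$ approximates $h_{i,k}/\tau$. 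This quotient is well defined because each $h_{i,k}$ is supported away from $A$. Uniform closeness on $C$ makes the period integrals close, so the derivative matrix remains an isomorphism. Finally, restricting to $S$ gives $h_{i,k}\in\mathcal{A}(S)$, which completes the construction.
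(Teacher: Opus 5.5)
Your proposal is correct and takes essentially the same route as the paper. Two points of $I_i\setminus A$ with distinct tangent planes $T_{\phi(p)}\B$ supply three independent tangent vectors; bump functions concentrating at these points make the derivative of the period map block-diagonal and invertible in the limit; Bishop--Mergelyan on the Runge set $C$ replaces the bumps by functions holomorphic on $\tilde{S}$; and a Weierstrass function vanishing to order $s$ on $A$ gives the interpolation.

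The only difference is bookkeeping. The paper multiplies the bumps by its Weierstrass function $g$ from the start and approximates $h^{n_0}_{ik}$, whereas you approximate $h_{ik}/\tau$ and multiply back by $\tau$. Normalizing the bumps to integral one also makes the limit of the diagonal blocks clearer than the paper's division by $b^{n}_{ik}-a^{n}_{ik}$.
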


  Following \cite{AFL21}, we call $\Phi_{\phi} \in \mathcal{A} (S \times \C^{3l},\B)$ in 
  \cref{prop:existence.Prd.dominating.spray} a \emph{period dominating spray} 
  of maps $S \to \B$ with the core $\Phi_{\phi}(\;\cdot\;,0)= {\phi}$.

  \begin{proof}[Proof of \cref{prop:existence.Prd.dominating.spray}]
    Fix $i\in\{1, \dots, l\}$. By the assumption, there exist points $p$ and $p'$ of $I_i \setminus A$ 
    such that $\phi(p)\neq \alpha \, \phi(p')$ for all $\alpha \in \C$. 
    For these two points $p$ and $p'$, $T_{\phi(p)}\B \ne T_{\phi(p')}\B$ holds by 
    \cref{lem:tan.sp.of.B}. We choose a basis $\{v_{i1}, v_{i2}, v_{i3}\}$ of $\C^3$ from 
    $T_{\phi(p)}\B \cup T_{\phi(p')}\B \subset \C^3$ and points $p_{i1}, p_{i2}, p_{i3} \in \{p, p'\}$ 
    with $v_{ik}\in T_{\phi(p_{ik})}\B$ $(k=1, 2, 3)$. 
    \cref{lem:vec.fld.and.flow.maxface} yields the existence of holomorphic maps 
    $V_{ik}:\C^3 \to \C^3$ and $\psi_{ik}:\C\times\C^3 \to \C^3$ satisfying 
    the following conditions:
    \begin{itemize} 
      \item
      $V_{ik}\left(\phi(p_{ik})\right)=v_{ik}$,
      
      \item
      $V_{ik}(w)\in T_w \B$ for all $w\in\B$,
      
      \item 
      $\partial_t\psi_{ik}(t,w)=V_{ik}\left(\psi_{ik}(t,w)\right)$ for all $(t,w)\in\C\times\C^3$,
      
      \item 
      $\psi_{ik}(0,\,\cdot\,)=\mathrm{id}$, and
      
      \item
      if $w \in \B$ then $\psi_{ik}(t,w) \in \B$ for all $t \in \C$.
    \end{itemize}
    Let $A_{ik} \in \M(3,\C)$ be a matrix with $V_{ik}(w)=A_{ik}w$ and 
    $\psi_{ik}(t,w)=\exp(tA_{ik})w$. 
    For each positive integer $n$, 
    we can choose a continuous function $h_{ik}^{n} : C \to \R_{\geq0}$ with 
    $\mathrm{supp}(h_{ik}^{n})\subset I_i$, $\mathrm{supp}(h_{ik}^n)$ is connected,
    $\bigcap_{n = 1}^{\infty} \mathrm{supp}(h_{ik}^{n}) = \{p_{ik}\}$, and
    $h_{ik}^{n}\equiv 1$ on a neighborhood of $p_{ik}$.
    Moreover, by \cref{fact:Weierstrass.thm.op.Riem.surf}, 
    there exists $g\in\mathcal O(\tilde{S})$ 
    that vanishes to order $s$ at every point of $A$.
    Now, we define $\Phi_{n}:C\times (\C^3)^l\to\B$ by
    \begin{align}
      \Phi_{n}(p,t_{11}, t_{12}, t_{13}, \dots, & t_{l1}, t_{l2},t_{l3})
      \coloneqq 
      \psi^{g(p) \, h_{11}^{n}(p) \, t_{11}}_{11}\circ\cdots\circ
       \psi^{g(p) \, h_{l3}^{n}(p)\, t_{l3}}_{l3}
      \left(\phi(p)\right)\\
      &=\exp\left(g(p) \, h_{11}^{n}(p) \, t_{11} \, A_{11}\right) \cdots
        \exp\left(g(p) \, h_{l3}^{n}(p) \, t_{l3} \, A_{l3}\right)\,\phi(p),
    \end{align}
    where $\psi^{\ t}_{ik}(p)=\psi_{ik}(t,p)$. Since 
    \begin{align}
      \frac{\partial}{\partial t_{ik}}\bigg |_{t_{ik}=0}\Phi_{n}(p,0, \dots, t_{ik}, \dots, 0)
      &= \frac{\partial}{\partial t_{ik}}\bigg |_{t_{ik}=0}
             \exp\left(g(p) \, h_{ik}^{n}(p) \, t_{ik} \, A_{ik}\right) \, \phi(p) \\
      &= g(p) \, h_{ik}^{n}(p) \, V_{ik}\left(\phi(p)\right),
    \end{align}
    we obtain
    \begin{align} \label{eq.6.1}
      \frac{\partial}{\partial t_{ik}}\bigg |_{t_{ik}=0}
             \Prd_{j}\left(\Phi_{n}(\;\cdot\;,0, \dots, t_{ik}, \dots, 0)\right)
      =\int_{C_j} g \, h_{ik}^{n}\cdot\left(V_{ik}\circ \phi\right)\theta
    \end{align}
    for each $j\in\{1, \dots, l\}$. If $i\ne j$, then
    \begin{align} \label{eq.6.2}
      \int_{C_j} g \, h_{ik}^{n}\cdot\left(V_{ik}\circ \phi\right)\theta=0.
    \end{align}
    Let us consider the case $i=j$. Suppose that $\gamma_i:[0,1]\to C_i$ is 
    a parameterization of $C_i$, and real numbers $a_{ik}^{n}$, $b_{ik}^{n}$, 
    and $\tau_{ik}$ satisfy
    $[a_{ik}^{n},\;b_{ik}^{n}]=\gamma_i^{-1}(\mathrm{supp}(h_{ik}^{n}))$ and 
    $p_{ik}=\gamma_i(\tau_{ik})$. Note that $\tau_{ik}\in[a_{ik}^{n},\;b_{ik}^{n}]$, and
    $\lim_{n\rightarrow\infty}(b_{ik}^{n}-a_{ik}^{n})=0$.
    If we choose $F_{ik}^{n}:[a_{ik}^{n},\;b_{ik}^{n}]\to\C^3$ and 
    $\tilde{\theta}:[a_{ik}^{n},\;b_{ik}^{n}]\to\C$ satisfying 
    $F_{ik}^{n}dt=\gamma_i^*(gh_{ik}^{n}\cdot(V_{ik}\circ \phi) \, \theta)$ and
    $\tilde{\theta} \, dt=\gamma_i^*\theta$, then we get
    \begin{equation}
      \begin{aligned} \label{eq.6.3}
        &\left\|\frac{1}{b_{ik}^{n}-a_{ik}^{n}}\left(\int_{C_i}g \, h_{ik}^{n} \cdot 
        \left(V_{ik}\circ \phi\right)\theta\right)-\tilde{\theta}(\tau_{ik})\;v_{ik}\right\| \\
        =&\left\|\frac{1}{b_{ik}^{n}-a_{ik}^{n}}
           \left(\int_{a_{ik}^{n}}^{b_{ik}^{n}}F_{ik}^{n}dt\right) - 
             F_{ik}^{n}(\tau_{ik})\right\| 
        =\left\|\int_{a_{ik}^{n}}^{b_{ik}^{n}}
            \frac{F_{ik}^{n}-F_{ik}^{n}(\tau_{ik})}{b_{ik}^{n}-a_{ik}^{n}}dt\right\| \\
        \leq&\left(b_{ik}^{n}-a_{ik}^{n}\right)
              \left\|\frac{F_{ik}^{n}-F_{ik}^{n}(\tau_{ik})}{b_{ik}^{n}-a_{ik}^{n}}
                    \right\|_{[a_{ik}^{n}, b_{ik}^{n}]}
        =\left\|F_{ik}^{n}-F_{ik}^{n}(\tau_{ik})\right\|_{[a_{ik}^{n},b_{ik}^{n}]}
        \rightarrow 0.
      \end{aligned}
    \end{equation}
    Let $\{J_{i}^{n}\}_{n=1}^{\infty}$ and $\{H_i^{n}\}_{n=1}^{\infty}$ be sequences 
    in $\M(3,\C)$ defined by
    \begin{gather}
      J_{i}^{n} \coloneqq
      \left(
      \frac{\partial}{\partial t_{ik}}\bigg |_{t_{ik}=0}
         \Prd_i\left(\Phi_{n}(\;\cdot\;,0, \dots,t_{ik}, \dots,0)\right)
      \right)_{k = 1, 2, 3},\\
      H_i^{n}\coloneqq
      \begin{pmatrix}
        (b_{i1}^{n}-a_{i1}^{n})^{-1}&     0                       & 0 \\
                    0             &(b_{i2}^{n}-a_{i2}^{n})^{-1} & 0  \\
            0                     &            0          & (b_{i3}^{n}-a_{i3}^{n})^{-1}
      \end{pmatrix}.
    \end{gather}
    From $\eqref{eq.6.1}$, $\eqref{eq.6.2}$ and $\eqref{eq.6.3}$, we know that the sequence 
    $\{J_{i}^{n}\,H_i^{n}\}_{n=1}^{\infty}$ converge to the following invertible matrix:
    \begin{equation}
      (\tilde{\theta}(\tau_{i1})v_{i1}, 
         \tilde{\theta}(\tau_{i2})v_{i2},
           \tilde{\theta}(\tau_{i3})v_{i3})
      = (v_{i1}, v_{i2}, v_{i3})
      \begin{pmatrix}
        \tilde{\theta}(\tau_{i1})& 0 & 0 \\
         0 & \tilde{\theta}(\tau_{i2}) & 0 \\
         0 & 0 & \tilde{\theta}(\tau_{i3}) 
      \end{pmatrix}.
    \end{equation}
    Hence, there is an integer $n_0$ such that $J_i^{n_0}\,H_i^{n_0}\in \GL(3,\C)$, i.e., 
    $J_i^{n_0}\in \GL(3,\C)$ for all $i\in \{1, \dots, l\}$. Thus $\Phi_{n_0}$ satisfies 
    \begin{equation}
      \frac{\partial}{\partial t}\bigg |_{t=0}\Prd(\Phi_{n_0}(\;\cdot\;,t))=
      \left(
        \begin{array}{c|c|c}
          J_1^{n_0}&\cdots&0 \\ \hline
          \vdots&\ddots&\vdots \\ \hline
          0&\cdots&J_l^{n_0}
        \end{array}
      \right) \in \GL(3l,\C).
    \end{equation}
    Since $C$ is Runge in an open neighborhood $\tilde{S}$ of $S$, 
    the Bishop--Mergelyan theorem (\cref{fact:Mergelyan}) implies that 
    there exists a sequence 
    $\{\tilde{h}_{ik}^{\nu}\}_{\nu=1}^{\infty}\subset\mathcal O (\tilde{S})$ such that
    $\|\tilde{h}_{ik}^{\nu}-h_{ik}^{n_0}\|_{C}\rightarrow0$ $(\nu\rightarrow\infty)$.
    We define $\tilde{\Phi}_{\nu}\in\mathcal{A}(S\times\C^{3l},\B)$ by
    \begin{equation}\label{e.q.period.dominating}
      \begin{split}
        \tilde{\Phi}_{\nu}(p,t_{11}, t_{12},t_{13}, \dots, & t_{l1}, t_{l2}, t_{l3})
        \coloneqq
         \psi^{g \, \tilde{h}_{11}^{\nu}(p)t_{11}}_{11} \circ \cdots \circ
          \psi^{g \, \tilde{h}_{l3}^{\nu}(p)t_{l3}}_{l3} \left (\phi(p) \right)\\
        &=\exp\left(g(p) \, \tilde{h}_{11}^{\nu}(p)t_{11}A_{11}\right) \cdots 
           \exp\left(g(p) \, \tilde{h}_{l3}^{\nu}(p)t_{l3}A_{l3}\right)\,\phi(p).
      \end{split}
    \end{equation}
    There is a constant $L>0$ such that 
    \begin{align}
      &\left\|\frac{\partial}{\partial t_{ik}}\bigg |_{t_{ik}=0}
        \Prd_{j}\left(\tilde{\Phi}_{\nu}(\;\cdot\;,0, ...,t_{ik}, ..., 0)\right)
          - \frac{\partial}{\partial t_{ik}}\bigg |_{t_{ik}=0}
             \Prd_{j}\left(\Phi_{n_0}(\;\cdot\;,0, ...,t_{ik}, ..., 0)\right)\right\|\\
      =&\left\|\int_{C_j} g \, \left(\tilde{h}_{ik}^{\nu}-h_{ik}^{n_0}\right)
         \left(V_{ik}\circ \phi\right)\theta\right\|
      \le L \left\|\tilde{h}_{ik}^{\nu}-h_{ik}^{n_0}\right\|_C
    \end{align}
    holds for all $i,j\in\{1, \dots, l\}$ and $k\in\{1, 2, 3\}$. Thus,
    $\{\partial_t|_{t=0}\Prd(\tilde{\Phi}_{\nu}(\,\cdot\,,t))\}_{\nu=1}^{\infty}$ converges to
    $\partial_t|_{t=0}\Prd(\Phi_{n_0}(\,\cdot\,,t))\in \GL(3l,\C)$. 
    Therefore, we can take an integer $\nu_0$ so that 
    $\partial_t|_{t=0}\Prd(\tilde{\Phi}_{\nu_0}(\,\cdot\,,t))$ is an invertible matrix. 
    Then, $\Phi_{\phi} \coloneqq \tilde{\Phi}_{\nu_0}$ is the desired map.
    Indeed, it is clear that $\Phi_{\phi} (p,t) = \phi(p)$ holds 
    for every $p\in A$ and $t\in\C^{3l}$. 
    Set $B_{ik}(p,t) =\tilde{h}_{ik}^{\nu_0}(p) \, t_{ik} \, A_{ik}$. 
    Then, there is a holomorphic map $F:\tilde{S}\times\C^{3l} \to \M(3,\C)$ such that
    \begin{equation}
      \Phi_{\phi}(p,t)
      = \left( I_3 + \sum_{i=1}^l\sum_{k=1}^3 g(p) \, B_{ik}(p,t) + 
              g(p)\,F(p,t)\right)\,\phi(p).
    \end{equation}
    We get
    $\Phi_{\phi}(p,t) - \phi(p) = g(p) \, (\sum_{i,k}B_{ik}(p,t) + F(p,t)) \, \phi(p)$, 
    and hence $\Phi_{\phi}(\,\cdot\,,t)$ agrees with $\phi$ to order $s$ 
    at every point of $\Int(S) \cap A$.
  \end{proof}

  \begin{remark}\label{rem.6.9}
    Let $\Phi_\phi$ be a period dominating spray with core $\phi$ defined by 
    \eqref{e.q.period.dominating}. Then, there exists $\epsilon > 0$ such that 
    for every $\tilde{\phi} \in \mathcal{A}(S,\B)$ with 
    $\|\phi-\tilde{\phi}\|_S < \epsilon$ and $\tilde{\phi}|_A = \phi|_A$, 
    and such that $\tilde{\phi} - \phi$ vanishes to order $s$ on $A \cap \Int(S)$, 
    replacing $\phi$ with $\tilde{\phi}$ in \eqref{e.q.period.dominating} yields 
    a period dominating spray $\Phi_{\tilde{\phi}}$ with core $\tilde \phi$.
    Indeed, let $A_{ik}\in\M(3,\C)$ $(1\le i\le l,\ k = 1, 2, 3)$ be matrices 
    in \eqref{e.q.period.dominating} and 
    let $L_{ik}\coloneqq\|g\tilde{h}_{ik}^{\nu_0}\|_S\,\|A_{ik}\|$. Then,
    \begin{align}
      &\left\|\frac{\partial}{\partial t_{ik}}\bigg |_{t_{ik}=0}
        \Prd_{j}\left(\Phi_{\phi}(\;\cdot\;,0, ...,t_{ik}, ..., 0)\right)
         -\frac{\partial}{\partial t_{ik}}\bigg |_{t_{ik}=0}
          \Prd_{j}\left(\Phi_{\tilde \phi}(\;\cdot\;,0, ...,t_{ik}, ..., 0)\right)\right\|\\
     =&\left\|
        \int_{C_j}
          g \, \tilde{h}_{ik}^{\nu_0} \, A_{ik}\cdot\left(\phi-\tilde \phi\right)\theta
         \right\|
     \le L_{ik}\cdot\left\|\phi-\tilde \phi\right\|_S\int_{C_j}|\theta|.
   \end{align}
   Since $\partial_t|_{t=0}\Prd(\Phi_{\phi}(\cdot,t))\in\GL(3l,\C)$, 
   there is a positive number $\epsilon>0$ such that 
   if $\|\phi-\tilde \phi\|_S<\epsilon$ then 
   $\partial_t|_{t=0}\Prd(\Phi_{\tilde \phi}(\cdot,t))\in\GL(3l,\C)$.
   Furthermore, $\Phi_{\tilde{\phi}}(\cdot,t)$ agrees with $\tilde{\phi}$ on $A$,
   and to order $s$ on $A \cap \Int(S)$.
  \end{remark}

  The proposition below is a slightly weaker statement, 
  which is needed to prove \cref{prop:per.prerv.full.approx.}.

  \begin{proposition}\label{prop:Mergelyan.for.B-valued.map}
    Let $M$ be an open Riemann surface, 
    let $S\subset M$ be a Runge admissible set of $M$,
    $A\subset S$ be a finite subset, and 
    let $s \in \Z_{>0}$.
    Then, given $\phi \in \mathcal{A}(S,\B)$, there exists a sequence 
    $\{\phi_{n}\}_{n=1}^{\infty}$ in $\mathcal{O}(M,\B)$ satisfying the following:
    \begin{enumerate}
      \item 
      $\|\phi_{n}-\phi\|_S\rightarrow0\quad(n\rightarrow\infty)$,
    
      \item
      $\phi_{n}$ agrees with $\phi$ at every point of $A$, and
    
      \item 
      $\phi_{n}$ agrees with $\phi$ to order $s$ 
      at every point of $A \cap \Int(S)$.
    \end{enumerate}
  \end{proposition}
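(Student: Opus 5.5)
The plan is a two-step reduction: first Mergelyan approximation of $\phi$ by maps holomorphic near $S$, then Runge approximation by maps on $M$ via the Oka property of $\B$ (\cref{fact:B.is.Oka}). In both steps the interpolation conditions on $A$ have to be carried along.

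\emph{Step 1 (Mergelyan step).} Since $S$ is a Runge admissible set, its complement has no holes. In particular it has finitely many holes, and $\mathcal{A}(S)=\overline{\mathcal{O}}(S)$ by the Bishop--Mergelyan theorem (\cref{fact:Mergelyan}). We then apply \cref{fact:Mergelyan.for.mfd-valued} with $X=\B$. This gives, for each $n$, a map $\phi'_n\in\mathcal{O}(S,\B)$ holomorphic on an open neighborhood $U_n$ of $S$ with $\|\phi'_n-\phi\|_S<1/n$. Moreover $\phi'_n$ agrees with $\phi$ at the points of $A$, and to order $s$ at the points of $A\cap\Int(S)$.

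\emph{Step 2 (Runge step).} By \cref{fact:str.subham.Morse.exh.fucn}, we may shrink $U_n$ to a neighborhood of a compact Runge set $K_n$ with $S\subset\Int(K_n)\subset K_n\subset U_n$. We then want to apply \cref{fact:Runge.for.Oka-valued} to $\phi'_n|_{K_n}$, with compact set $K_n$, discrete set $A$, and order $s$. That fact requires a continuous map $M\to\B$ that is holomorphic near $K_n\cup A$, so we must first extend $\phi'_n$ continuously from a neighborhood of $K_n$ to all of $M$.

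This extension is the main obstacle. We handle it via a strongly subharmonic Morse exhaustion $\rho$ with $K_n\subset\{\rho<0\}\subset\{\rho\le 0\}\subset U_n$. The sublevel set $L=\{\rho\le 0\}$ is a compact Runge domain with smooth boundary. Hence $M$ deformation retracts onto a neighborhood of $L\cup\Gamma$, where $\Gamma$ is a union of arcs, or onto a CW complex obtained from $L$ by attaching $1$-cells (open Riemann surfaces have no $2$-cells relative to Runge compacts). Since $\B$ is connected, a continuous extension over each attached $1$-cell exists, and so $\phi'_n$ extends to a continuous map $M\to\B$ that is unchanged near $K_n$.

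Alternatively, one may bypass the extension issue. We can directly use that the Oka property gives the Runge approximation with interpolation on a discrete set for maps defined near a compact Runge set. This is the basic Oka principle with approximation (\cite[Theorem 5.4.4]{F17}), and its homotopy-theoretic hypothesis is automatic because $\B$ is connected and the pair $(M,K_n)$ has relative dimension one.

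\cref{fact:Runge.for.Oka-valued} then yields $\phi_n\in\mathcal{O}(M,\B)$ with $\sup_{K_n}\mathrm{dist}(\phi_n,\phi'_n)$ as small as desired. We use a Riemannian distance on $\B$ comparable to the Euclidean one on the compact set $\phi'_n(K_n)$, so that $\|\phi_n-\phi'_n\|_S<1/n$. The map $\phi_n$ also agrees with $\phi'_n$ to order $s$ at every point of $A\subset \Int(K_n)$.

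\emph{Conclusion.} Combining the two steps gives $\|\phi_n-\phi\|_S<2/n\to0$, which is (i). Since $\phi_n$ agrees with $\phi'_n$ at $A$ to order $s$, and $\phi'_n$ agrees with $\phi$ at $A$ and to order $s$ on $A\cap\Int(S)$, conditions (ii) and (iii) follow by transitivity of jet agreement.
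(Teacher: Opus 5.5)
Your proof is correct, and its skeleton is the same as the paper's. You use \cref{fact:Mergelyan.for.mfd-valued} to get maps in $\mathcal{O}(S,\B)$ with the interpolation conditions, then \cref{fact:Runge.for.Oka-valued,fact:B.is.Oka} once a continuous extension to $M$ is available, then a diagonal choice.

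The genuine difference is how you produce that extension. The paper first extends $\phi$ itself (\cref{lem:extendedness}) by an analytic argument tied to the explicit form of the quadric. It passes to $\mathbb{S}_*=\{z^0z^1=(z^2)^2\}\setminus\{0\}$ and builds global holomorphic maps $M\to\mathbb{S}_*$ approximating near $S$. It does this by prescribing divisors of $u^1,u^2$ via \cref{fact:Weierstrass.Florack} and setting $u^0=(u^2)^2/u^1$. It then uses the tubular-neighborhood retraction of \cref{lem:how.to.extend}~(i) to extend $\phi$, and \cref{lem:how.to.extend}~(ii) to transfer the extension to the approximants for large $n$.

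You instead extend $\phi'_n$ directly by an obstruction-theory argument. A strongly subharmonic Morse exhaustion has no index-$2$ critical points, so $M$ is built from a Runge sublevel set by attaching only $0$- and $1$-handles. Hence any continuous map into the path-connected manifold $\B$ extends.

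Your route is shorter and never needs to extend the merely continuous $\phi$. It also works for any connected Oka target, which shows that the extension hypothesis in \cref{fact:Runge.for.Oka-valued} is automatic over Runge compacts. The paper's route avoids homotopy-theoretic input, at the cost of a longer computation specific to $\B$; the same handle picture does reappear in its proof of \cref{thm:main}.

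Two points to tidy when writing it up:
\begin{enumerate}
\item Index-$0$ critical points contribute $0$-cells (new disc components) in addition to $1$-cells.
\item The cleanest rigorous version is stagewise. Extend over collars using the gradient-flow product structure. Across each index-$1$ level, use a retraction of $\{\rho\le c_i\}$ onto $\{\rho\le c_{i-1}\}\cup e^1$. Start from a regular value slightly below $0$ if $0$ is critical.
\end{enumerate}
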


  To show \cref{prop:Mergelyan.for.B-valued.map}, 
  we prepare lemmas.

  \begin{lemma} \label{lem:how.to.extend}
    \begin{enumerate}
      \item 
      Let $M$ be a smooth manifold, 
      let $K$ be a compact subset of $M$, 
      let $N$ be a smooth submanifold of $\R^m$, and 
      let $\phi:K\to N$ be a continuous map. 
      If there exists a sequence $\{\phi_{n}\}_{n=1}^{\infty}$ 
      of continuous maps from $M$ to $N$ such that $\|\phi_{n}-\phi\|_K\rightarrow0$, 
      then $\phi$ admits a continuous extension to $M$.

      \item 
      Let $M$ be an open Riemann surface, 
      let $K$ be a compact subset of $M$, and 
      let $\{\phi_{n}\}_{n=1}^{\infty}$ be a sequence in $\mathcal{O}(K, \B)$. 
      If there exists a continuous map $\phi \colon M \to \B$ such that 
      $\|\phi_{n} - \phi\|_K \to 0$, 
      then for all sufficiently large $n$, $\phi_{n}$ admits a continuous extension to $M$ 
      such that $\phi_n|_K \in \mathcal{O}(K, \B)$.
    \end{enumerate}
  \end{lemma}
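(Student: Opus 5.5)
For (i), I would use the tubular neighborhood theorem for $N \subset \R^m$. There is an open neighborhood $W$ of $N$ in $\R^m$ and a smooth retraction $\pi \colon W \to N$, the nearest point projection on a suitable tubular neighborhood. Since $\phi(K)$ is compact, there is $\delta > 0$ such that the closed $\delta$-neighborhood $\overline{B_\delta(\phi(K))}$ lies in $W$. Choose $n$ with $\|\phi_n - \phi\|_K < \delta$.

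The goal is a map $\Psi \colon M \to N$ with $\Psi|_K = \phi$. The difference $\phi - \phi_n|_K$ is a continuous map $K \to \R^m$ of sup-norm less than $\delta$. By the Tietze extension theorem, applied componentwise and followed by a radial retraction onto the closed ball $\overline{B_\delta(0)}$, it extends to a continuous $u \colon M \to \R^m$ with $\|u\| \le \delta$ everywhere. On the open set $U = \{p : \phi_n(p) + u(p) \in W\}$, which contains $K$, the map $\pi \circ (\phi_n + u)$ equals $\phi$ on $K$.

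To globalize, I would take a continuous cutoff $\chi \colon M \to [0,1]$ with $\chi = 1$ on $K$ and support in $U' := \{p : \phi_n(p) + s\,u(p) \in W \text{ for all } s \in [0,1]\}$. The set $U'$ is open by compactness of $[0,1]$ and contains $K$, by the choice of $\delta$. Then I set
\begin{equation}
  \Psi \coloneqq \pi \circ (\phi_n + \chi u) \ \text{on } U', \qquad \Psi \coloneqq \phi_n \ \text{off } \mathrm{supp}\,\chi.
\end{equation}
The two definitions agree on the overlap, because there $\chi = 0$ and $\pi$ restricts to the identity on $N$. Hence $\Psi$ is a continuous extension of $\phi$.

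For (ii), I would use the same mechanism with the roles reversed. View $\B \subset \C^3 = \R^6$ as a smooth submanifold; it is closed in $\C^3 \setminus \{0\}$ and smooth there, so a tubular neighborhood $W$ with retraction $\pi$ exists. Since $\phi(K)$ is compact in $\B$, there is $\delta > 0$ with $\overline{B_\delta(\phi(K))} \subset W$ and this neighborhood avoiding the origin. Each $\phi_n$ is holomorphic on some open $U_n \supset K$. For large $n$ we have $\|\phi_n - \phi\|_K < \delta/2$, and by continuity this inequality persists on a smaller open neighborhood $U_n' \subset U_n$ of $K$.

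I then take a cutoff $\chi$ equal to $1$ on a neighborhood of $K$ and supported in $U_n'$, and define
\begin{equation}
  \tilde\phi_n \coloneqq \pi\bigl(\phi + \chi\,(\phi_n - \phi)\bigr) \ \text{on } U_n', \qquad \tilde\phi_n \coloneqq \phi \ \text{elsewhere}.
\end{equation}
The argument $\phi + \chi(\phi_n - \phi)$ lies within $\delta/2$ of $\phi(p)$ on $U_n'$, so it stays in $W$. On the neighborhood of $K$ where $\chi = 1$ we get $\tilde\phi_n = \pi(\phi_n) = \phi_n$, so $\tilde\phi_n$ is holomorphic there. Outside $\mathrm{supp}\,\chi$ we get $\pi(\phi) = \phi$, so the two definitions glue continuously.

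The main point needing care is the uniform choice of $\delta$: the segment from $\phi$ to $\phi_n$, or the ball, must stay inside the tube around $\B$ near all points of the relevant compact region. I handle this by shrinking to neighborhoods where the estimate persists. The existence of a continuous cutoff with $\chi \equiv 1$ near $K$ follows from Urysohn's lemma, since manifolds are normal.
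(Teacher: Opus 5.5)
Your proof is correct and takes essentially the paper's route. Both arguments use a tubular-neighborhood retraction onto $N$ (resp.\ $\B$), a cutoff equal to $1$ near $K$, and the retraction of a convex combination of the approximant with an extension of $\phi$. The only differences are that in (i) you Tietze-extend the difference $\phi-\phi_n$ rather than $\phi$ itself, and that you control the tube with a uniform $\delta$ together with the tube lemma, whereas the paper uses the pointwise radius $\delta(x)=\sup\{\epsilon\in(0,1]: B_\epsilon(x)\subset T\}$.

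In (ii), state explicitly that $U_n'$ is also shrunk so that $\mathrm{dist}(\phi(p),\phi(K))<\delta/2$ on it (or use the segment set you built in (i)). Otherwise ``within $\delta/2$ of $\phi(p)$'' does not by itself put the point in $W$ for $p\notin K$.
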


  \begin{proof}
    (i): 
    Let $T\subset\R^m$ be a tubular neighborhood of $N$ and $r:T\to N$ be a retraction. 
    By Tietze's theorem, there exists $\hat \phi:M\to\R^m$ such that $\hat \phi|_K=\phi$. 
    We set $U\coloneqq\hat \phi^{-1}(T)$ and $\tilde \phi\coloneqq r\circ\hat \phi|_U$. 
    Then, $\tilde \phi:U\to N$ is a continuous extension of $\phi$, i.e., $\tilde \phi|_K=\phi$.
    Let $\delta : N \to \R$ be a continuous function defined by
    \begin{equation} \label{eq:def.delta}
      \delta(x) \coloneqq \sup \{\epsilon\in(0,1] : B_{\epsilon}(x) \subset T\}.
    \end{equation}
    and let $\delta_0\coloneqq\min\{\delta(x):x\in \phi(K)\}$. 
    We can take $n_0$ such that $\|\phi_{n_0}-\phi\|_K<\delta_0$. Since a function 
    $U\ni p\mapsto\delta(\tilde \phi(p))-\|\phi_{n_0}(p)-\tilde \phi(p)\|\in\R$ is continuous,
    $W\coloneqq\{p\in U:\|\phi_{n_0}(p)-\tilde \phi(p)\|<\delta(\tilde \phi(p))\}$
    is an open neighborhood of $K$. 
    We take a bump function $\eta : M \to [0,1]$ with $\eta|_K \equiv 1$ and 
    $\eta|_{M\backslash W}\equiv 0$. Then, for each $p\in W$, we get
    \begin{equation}
      \left\|\left\{\eta(p) \, \tilde{\phi}(p) + 
        (1-\eta(p)) \, \phi_{n_0}(p)\right\} - \tilde{\phi}(p)\right\|
      \le\|\phi_{n_0}(p)-\tilde \phi(p)\|<\delta(\tilde \phi(p)).
    \end{equation}
    Thus, it holds that
    \begin{equation}
      \eta(p) \, \tilde{\phi}(p) + (1-\eta(p)) \, \phi_{n_0}(p) 
      \in B_{\delta(\tilde{\phi}(p))}(\tilde{\phi}(p))\subset T
    \end{equation}
    for all $p \in W$.
    Therefore, we can define a continuous map 
    $F : M\to N$ by $F \coloneqq r\circ(\eta \, \tilde{\phi}+(1-\eta) \, \phi_{n_0})$. 
    This map satisfies $F|_K=\phi$.

    (ii):
    Let $T\subset\C^3$ be a tubular neighborhood of $\B$,  
    $r:T\to\B$ be a smooth retraction, and  
    $\delta$ be a continuous function defined by \eqref{eq:def.delta}. 
    We set $\delta_1\coloneqq\min\{\delta(z):z\in \phi(K)\}$ and
    take sufficiently large $n$ such that $\|\phi_{n}-\phi\|_K<\delta_1$.
    Suppose that $\phi_{n}$ is holomorphic on an open neighborhood $U$ of $K$.
    For the open neighborhood
    $W\coloneqq\{p\in U:\|\phi_{n}(p)-\phi(p)\|<\delta(\phi(p))\}$
    of $K$, we choose a closed subset $A$ satisfying 
    $K\subset \Int(A) \subset A\subset W$ and 
    a bump function $\eta_1 : M \to [0,1]$ satisfying 
    $\eta_1|_A\equiv 1$ and $\eta_1|_{M\backslash W}\equiv 0$.
    Then, $\eta_1(p) \, \phi_{n}(p)+(1-\eta_1(p)) \, \phi(p)\in T$ holds for all $p\in W$. 
    We define a continuous extension $\hat{\phi}_{n}:M\to\B$ of $\phi_n$ by 
    $\hat{\phi}_{n}\coloneqq r\circ(\eta_1 \, \phi_{n}+(1-\eta_1) \, \phi)$.
    This map $\hat{\phi}_{n}$ is holomorphic on the open neighborhood $\Int(A)$ of $K$.
  \end{proof}

  \begin{lemma}\label{lem.sesshoku}
    Let $U$ be a domain in the complex plane $\C$ containing the origin, and 
    let $f,\ F \in \mathcal O(U)$ be holomorphic functions with $f(0)=0$. 
    If $F - f$ vanishes at the origin and $(F-f)(0)>(f)(0)$, 
    then $F$ has a zero at the origin of order $(F)(0)=(f)(0)$, 
    where $(F-f)$, $(F)$, and $(f)$ are principal divisors.
  \end{lemma}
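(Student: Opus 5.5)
The proof is a one-line computation with orders of vanishing; I read the notation $(h)(0)$ as the order of vanishing of $h$ at the origin, with the value $+\infty$ when $h\equiv 0$ near $0$. Put $m \coloneqq (f)(0)$. Since $f(0)=0$, either $f\equiv 0$ near the origin, in which case $m=\infty$ and the hypothesis $(F-f)(0)>m$ cannot hold, or $1\le m<\infty$. So I will assume $m$ is a finite positive integer and write $f(z)=z^m u(z)$ with $u$ holomorphic near $0$ and $u(0)\neq 0$.

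Next, set $k \coloneqq (F-f)(0)$, so that $k>m$. Then $F-f = z^{k} v(z)$ for some $v$ holomorphic near $0$; if $F-f\equiv 0$, take $v\equiv 0$ and any $k>m$. Consequently
\begin{equation}
F(z) = f(z) + (F-f)(z) = z^m\bigl(u(z) + z^{k-m} v(z)\bigr).
\end{equation}
The bracketed function is holomorphic near $0$, and its value at $0$ is $u(0)\neq 0$ because $k-m\ge 1$. Therefore $F$ vanishes at the origin to order exactly $m$, that is, $(F)(0)=(f)(0)$.

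There is no real obstacle in this argument. The only point that needs care is the conventions: the divisor of the identically zero function is $+\infty$, and the degenerate case $f\equiv 0$ is excluded by the strict inequality. If the intended reading of $(f)(0)$ is instead the coefficient of the divisor at the origin, the same factorization proves the statement verbatim.
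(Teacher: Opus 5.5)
Your proof is correct and is essentially the paper's argument: write $f=z^m u$ with $u(0)\neq 0$ and $F-f=z^k v$, so that $F=z^m\bigl(u+z^{k-m}v\bigr)$ with the bracket nonzero at the origin. Your explicit handling of the degenerate cases $f\equiv 0$ and $F-f\equiv 0$ is a small gain in care over the paper. In the second case the paper's requirement that the cofactor of $F-f$ be nonzero at $0$ cannot be met. This does not change the route.
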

  
  \begin{proof}
    Let $k\coloneqq(F-f)(0)$ and $l\coloneqq(f)(0)$. 
    There exist holomorphic functions $g$ and $h$ such that
    \begin{gather}
      F(z)-f(z)=z^kg(z),\qquad g(0)\ne0,\\
      f(z)=z^lh(z),\qquad h(0)\ne0.
    \end{gather}
    Thus, we get $F(z)=z^l(z^{k-l}g(z)+h(z))$ and 
    this implies $(F)(0)=l=(f)(0)$.
  \end{proof}
  
  \begin{lemma}\label{lem:extendedness}
    Let $M$ be an open Riemann surface and 
    let $K$ be a compact Runge subset of $M$. 
    Then, every map in $\mathcal{A}(K,\B)$ admits a continuous extension to $M$.
  \end{lemma}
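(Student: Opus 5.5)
The plan is to reduce to \cref{lem:how.to.extend}(i), with $N = \B$ viewed as a smooth submanifold of $\C^3 \cong \R^6$. Fix $\phi \in \mathcal{A}(K,\B)$. By (i), it suffices to produce a sequence of continuous maps $\phi_n \colon M \to \B$ with $\|\phi_n - \phi\|_K \to 0$. The natural source of such approximants is \cref{fact:Mergelyan.for.mfd-valued} applied with $X = \B$. However, that fact only gives $\mathcal{A}(K,\B) = \overline{\mathcal{O}}(K,\B)$. This means approximation by maps holomorphic on neighbourhoods of $K$, not maps defined on all of $M$. So the approach will be to first get approximants near $K$, and then push them to all of $M$ using a homotopy-type argument that exploits the topology of $\B$.

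\textbf{Step 1.} Since $K$ is Runge in $M$, it has no holes. Hence $\mathcal{A}(K) = \overline{\mathcal{O}}(K)$ by \cref{fact:Mergelyan}, and \cref{fact:Mergelyan.for.mfd-valued} gives $\phi_n \in \mathcal{O}(K,\B)$ with $\|\phi_n - \phi\|_K \to 0$. Each $\phi_n$ is holomorphic on some open neighbourhood $U_n$ of $K$.

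\textbf{Step 2.} Extend $\phi_n$ (or a restriction of it to a smaller neighbourhood) continuously to $M$. The key point is a topological extension problem. By \cref{fact:str.subham.Morse.exh.fucn}, choose a compact Runge set $L$ with $K \subset \Int(L) \subset L \subset U_n$, given as a sublevel set $\{\rho \le c\}$ of a strongly subharmonic Morse exhaustion function. Then $M$ is obtained from $L$ by attaching, in order of increasing critical values, collars and $1$-handles only. Strong subharmonicity excludes index-$2$ critical points, so no $2$-handles occur.

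A continuous map on $L$ into $\B$ extends across collars trivially. It extends across a $1$-handle because $\B$ is path connected. Indeed, $\B$ is the complex cone minus the vertex, which is connected; it is $\C^* \times$(a smooth conic), or more concretely homotopic to $\mathbb{RP}^3$-like spaces. Attaching a $1$-handle only requires joining two points of $\B$ by a path. Doing this handle by handle along the exhaustion, and taking the limit (each step only modifies the map outside the previous compact sublevel set), yields a continuous $\hat\phi_n \colon M \to \B$ with $\hat\phi_n|_L = \phi_n|_L$.

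\textbf{Step 3.} Then $\|\hat\phi_n - \phi\|_K \to 0$, and \cref{lem:how.to.extend}(i) produces the continuous extension of $\phi$.

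The main obstacle is Step 2: justifying rigorously that $M$ deformation retracts onto $L$ union a $1$-dimensional CW complex. Equivalently, $M \setminus \Int(L)$ has no topology requiring $2$-cells. I would first try to use the Morse-theoretic description coming from \cref{fact:str.subham.Morse.exh.fucn} directly, since critical points of a strongly subharmonic Morse function on a surface have index $0$ or $1$. An alternative, if this is cumbersome, is to note that only path-connectedness of $\B$ is used. One can then extend along a proper embedded graph onto which $M$ retracts relative to $L$.
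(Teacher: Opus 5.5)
Your argument is correct, but it turns each approximant into a map on all of $M$ by a genuinely different route. You and the paper agree at both ends: both obtain approximants $\phi_n\in\mathcal{O}(K,\B)$ from \cref{fact:Mergelyan.for.mfd-valued}, and both conclude with \cref{lem:how.to.extend}~(i).

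The paper works complex-analytically. It passes to $\mathbb{S}_*=\{z^0z^1=(z^2)^2\}\setminus\{0\}$ via $\Xi$. It then uses \cref{fact:Weierstrass.Florack} to approximate $u_n^1$ and $u_n^2$ on a Runge neighbourhood $\tilde K$ by functions in $\mathcal{O}(M)$ with controlled divisors at the zeros of $u_n^1$. Setting $u_{n,\nu}^0=(u_{n,\nu}^2)^2/u_{n,\nu}^1$ gives maps in $\mathcal{O}(M,\mathbb{S}_*)$ approximating $u_n$ on $\tilde K$, and \cref{lem:how.to.extend}~(i) is applied twice.

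You work topologically. On a regular sublevel set $L$ of a strongly subharmonic Morse exhaustion, $M$ is built from $L$ by handles of index at most $1$. Extending across these handles only needs $\B$ to be nonempty and path connected.

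Each route has its advantages. Yours is shorter and uses nothing about $\B$ beyond path connectedness, so it proves the statement for any path connected target manifold. It also shows exactly where Rungeness enters, namely the absence of index-$2$ handles. The paper's route yields more: global holomorphic approximants of $u_n$ on $\tilde K$, a hands-on Runge theorem for this particular quadric, although only continuity is used afterwards.

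In fact your Step~1 can be dropped. The Tietze-plus-tubular-retraction part of the proof of \cref{lem:how.to.extend}~(i) already extends $\phi$ continuously to an open neighbourhood $U$ of $K$. You can then run Step~2 directly on a sublevel set $L$ with $K\subset\Int(L)\subset L\subset U$. Holomorphy of $\phi$ plays no role in the conclusion.

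Two small corrections.

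\emph{Index-$0$ handles.} Critical points of index $0$ can occur above the level defining $L$; Subcase~3a in the proof of \cref{thm:main} handles exactly these. So $0$-handles, i.e.\ new disc components, must be attached as well as $1$-handles. This is harmless, since such a disc can be mapped to any point of $\B$.

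\emph{Topology of $\B$.} $\B$ is not a product of $\C^*$ with a conic. It is the image of $\C^2\setminus\{0\}$ under the two-to-one map $(a,b)\mapsto(a^2+b^2,\,a^2-b^2,\,2ab)$, so $\B\cong(\C^2\setminus\{0\})/\{\pm1\}$ is homotopy equivalent to $\mathbb{RP}^3$. This still gives path connectedness, which is all you use.
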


  \begin{proof}
    We define a $2$-dimensional complex submanifold $\mathbb{S}_*$ of $\C^3$ and 
    a biholomorphic map $\Xi : \B \to \mathbb{S}_*$ by
    \begin{gather}
      \mathbb{S}_* 
      \coloneqq \{(z^0, z^1, z^2) \in \C^3 \setminus \{0\} : z^0 \, z^1 = (z^2)^2\},\\
      \Xi(z^0, z^1, z^2) \coloneqq (z^0 - z^1, z^0 + z^1, z^2).
    \end{gather}
    \cref{fact:Mergelyan.for.mfd-valued} implies that there is a sequence 
    $\{u_{n} = (u_n^0,u_n^1,u_n^2)\}_{n=1}^{\infty} \subset \mathcal{O}(K,\mathbb{S}_*)$ 
    which uniformly converges to
    $u \coloneqq \Xi \circ \phi \in \mathcal{A}(K,\mathbb{S}_*)$ on $K$. We note that 
    \begin{equation}\label{eq.3.7.1}
      2(u_{n}^2)=(u_{n}^0)+(u_{n}^1),
    \end{equation}
    where $(u_{n}^i)\ (i=0,1,2)$ are principal divisors. 
    Fix $n \in \Z_{>0}$. Let $Z_{n}^1 \coloneqq \{p \in K : u_{n}^1(p) = 0\}$ and 
    $U$ be an open neighborhood of $K$ such that $u_{n}$ is holomorphic on it. 
    By \cref{fact:str.subham.Morse.exh.fucn}, 
    we can take a compact Runge subset $\tilde{K}$ of $M$ such that
    \begin{equation} \label{eq.3.7.2}
      K\subset \Int(\tilde{K}), \qquad \tilde{K} \subset U.
    \end{equation}
    Since $u_{n}\in\mathcal O(\tilde{K},\mathbb{S}_*)$, 
    \cref{fact:Weierstrass.Florack} guarantees the existence of a sequence 
    $\{u_{n,\nu}^1\}_{\nu}\subset \mathcal{O}(M)$ such that
    \begin{itemize}
      \item 
      $\|u_{n,\nu}^1 - u_{n}^1\|_{\tilde{K}}\rightarrow0\quad(\nu\rightarrow\infty)$,
      
      \item 
      $u_{n,\nu}^1 - u_{n}^1$ vanishes $Z_{n}^1$ to order 
      $s_1 \coloneqq \max\{(u_{n}^1)(p):p\in Z_{n}^1\}+1$, and
      
     \item 
     $u_{n,\nu}^1$ has no zeros on $M\setminus Z_{n}^1$.
    \end{itemize}
    Since $(u_{n,\nu}^1 - u_{n}^1)(p)\ge s_1>(u_{n}^1)(p)\ \text{for all}\ p\in Z_{n}^1$, 
    \cref{lem.sesshoku} gives 
    \begin{align}\label{eq.3.7.3}
      (u_{n,\nu}^1)(p)=(u_{n}^1)(p)\qquad(p\in Z_{n}^1).
    \end{align}
    By Fact $\ref{fact:Weierstrass.Florack}$, 
    we get $\{u_{n,\nu}^2\}_{\nu}\subset\mathcal O(M)$ such that
    \begin{itemize}
      \item 
      $\|u_{n,\nu}^2 - u_{n}^2\|_{\tilde{K}} \rightarrow 0\quad (\nu\rightarrow\infty)$, and
      
      \item 
      $u_{n,\nu}^2 - u_{n}^2$ vanishes on $Z_{n}^1$ to order 
      $s_2\coloneqq\max\{(u_{n}^2)(p) : p \in Z_{n}^1\}+1$.
    \end{itemize}
    Since $(u_{n,\nu}^2 - u_{n}^2)(p) \ge s_2 > (u_{n}^2)(p)\ \text{for all}\ p\in Z_{n}^1$, 
    \cref{lem.sesshoku}, \eqref{eq.3.7.1} and \eqref{eq.3.7.3} give
    \begin{equation}
      2(u_{n,\nu}^2)(p)=2(u_{n}^2)(p)=(u_{n}^1)(p)=(u_{n,\nu}^1)(p) \qquad (p\in Z_{n}^1).
    \end{equation}
    Hence, we get $((u_{n,\nu}^2)^2 / u_{n,\nu}^1)(p)=0$ $(p\in Z_{n}^1)$. 
    This means that $(u_{n,\nu}^2)^2 / u_{n,\nu}^1$ has no zeros and poles at $Z_{n}^1$. 
    We set
    \begin{gather}
      u_{n,\nu}^0 \coloneqq \frac{(u_{n,\nu}^2)^2}{u_{n,\nu}^1},\\
      u_{n,\nu}\coloneqq(u_{n,\nu}^0, u_{n,\nu}^1, u_{n,\nu}^2).
    \end{gather}
    If $p \notin Z_{n}^1$, then $u_{n,\nu}^1(p) \ne 0$, and 
    if $p \in Z_{n}^1$, then $u_{n,\nu}^0(p) \ne 0$. 
    Therefore, we obtain $u_{n,\nu} \in \mathcal{O}(M, \mathbb{S}_*)$.

    Let us show $\|u_{n,\nu}-u_{n}\|_{\tilde{K}}\rightarrow0$ $(\nu\rightarrow\infty)$. 
    It is sufficient to show that 
    $\|u_{n,\nu}^0-u_{n}^0\|_{\tilde{K}}\rightarrow0$ $(\nu\rightarrow\infty)$. 
    The maximum modulus principle implies that there exists $p \in \partial \tilde{K}$ 
    such that $\|u_{n, \nu}^0 - u_n^0\|_K = |u_{n, \nu}^0(p) - u_n^0(p)|$.
    We know that $Z_n^1 \subset \tilde{K}$  because of $\eqref{eq.3.7.2}$, thus
    $u_{n, \nu}^1(p) \ne 0$.
    Therefore, we obtain the following:
     \begin{align}
       \|u_{n,\nu}^0 - u_{n}^0\|_{\tilde{K}}
       &=|u_{n, \nu}^0(p) - u_n^0(p)| \\
       &\le\left|u_{n,\nu}^0(p) - \frac{u_{n}^0(p) u_{n}^1(p)}{u_{n,\nu}^1(p)}\right|
        + \left|\frac{u_{n}^0(p) u_{n}^1(p)}{u_{n,\nu}^1(p)} - u_{n}^0(p)\right|\\
       &\le\left|\frac{1}{u_{n,\nu}^1(p)}\right| \, |u_{n,\nu}^2(p)^2-u_{n}^2(p)^2|
         + \left|\frac{1}{u_{n,\nu}^1(p)}\right| \, 
          |u_{n}^0(p)| \, |u_{n,\nu}^1(p)-u_{n}^1(p)|\\
       &\le \left\|\frac{1}{u_{n,\nu}^1}\right\|_{\partial\tilde{K}}
         \left(\|u_{n,\nu}^2\|_{\tilde{K}} + \|u_{n}^2\|_{\tilde{K}}\right) \, 
          \|u_{n,\nu}^2 - u_{n}^2\|_{\tilde{K}}\\
       &\hspace{100pt} + \left\|\frac{1}{u_{n,\nu}^1}\right\|_{\partial\tilde{K}}
         \|u_{n}^0\|_{\tilde{K}} \, \|u_{n,\nu}^1-u_{n}^1\|_{\tilde{K}}.
     \end{align} 
     The sequence $\{\|(u_{n,\nu}^1)^{-1}\|_{\partial\tilde{K}}\}_{\nu} \subset \R$ converges, 
     since the sequence $\{(u_{n,\nu}^1|_{\partial\tilde{K}})^{-1}\}_{\nu}$ converges uniformly 
     to $(u_{n}^1|_{\partial\tilde{K}})^{-1}$ on $\partial\tilde{K}$. 
     Moreover, the sequence $\{\|u_{n,\nu}^2\|_{\tilde{K}}\}_{\nu}$ also converges.
     Hence, there are positive constants $C_1$ and $C_2$ that do not depend on $\nu$, such that
     \begin{equation}
       \|u_{n,\nu}^0 - u_{n}^0\|_{\tilde{K}}
       \le C_1\|u_{n,\nu}^2 - u_{n}^2\|_{\tilde{K}} + 
            C_2\|u_{n,\nu}^1 - u_{n}^1\|_{\tilde{K}}
       \rightarrow 0 \quad (\nu\rightarrow\infty).
     \end{equation}
     Therefore, $\{u_{n,\nu}\}_{\nu}\subset\mathcal O(M,\mathbb{S}_*)$ 
     uniformly approximates $u_{n}$ on $\tilde{K}$. \cref{lem:how.to.extend} (i) implies 
     the existence of a continuous map $\tilde{u}_{n}:M\to\mathbb{S}_*$ satisfying 
     $\tilde{u}_{n}|_{\tilde{K}}=u_{n}$. 
     Clearly, $\{\tilde{u}_{n}\}_{n}$ uniformly converges to $u$ on $K$. We set 
     $\phi_{n}\coloneqq\Xi^{-1}\circ\tilde{u}_{n}$. Then, there exists a constant $C>0$ with 
     $\|\phi_{n}- \phi\|_K\le C \, \|\tilde{u}_{n}-u\|_K$. 
     Consequently, \cref{lem:how.to.extend} (i) gives a continuous extension of $\phi$.
  \end{proof}

  \begin{proof}[Proof of \cref{prop:Mergelyan.for.B-valued.map}]
    By \cref{fact:Mergelyan.for.mfd-valued}, there exists a sequence 
    $\{\tilde{\phi}_{n}\}_{n=1}^{\infty}$ in $\mathcal{O}(S,\B)$ satisfying the following:
    \begin{itemize}
      \item 
      $\|\tilde{\phi}_{n}-\phi\|_S\rightarrow0\quad(n\rightarrow\infty)$,
      
      \item 
      $\tilde{\phi}_{n}$ agrees with $\phi$ at every point of $A$, and
      
      \item 
      $\tilde{\phi}_{n}$ agrees with $\phi$ to order $s$ at every point of $A\cap\Int(S)$.
    \end{itemize}
    Without loss of generality, we may assume $\phi$ is a continuous map defined 
    on entire $M$ because of \cref{lem:extendedness}. By \cref{lem:how.to.extend} (ii), 
    for sufficiently large $n$, the map $\tilde \phi_{n}$ has a continuous extension
    $\hat{\phi}_{n}$ which is holomorphic on an open neighborhood of $S$. 
    Thus, \cref{fact:Runge.for.Oka-valued,fact:B.is.Oka} imply that 
    there exists a sequence of holomorphic maps $\{\hat{\phi}_{n,\nu}\}_{\nu}$ 
    in $\mathcal O(M,\B)$ satisfying the following conditions:
    \begin{itemize}
      \item
      $\|\hat{\phi}_{n,\nu}-\hat \phi_{n}\|_S\rightarrow0\quad(\nu\rightarrow\infty)$,
    
      \item 
      $\hat{\phi}_{n,\nu}$ agrees with $\hat \phi_{n}$ at every point of $A$, and
    
      \item 
      $\hat{\phi}_{n,\nu}$ agrees with $\hat \phi_{n}$ to order $s$ 
      at every point of $A\cap\Int(S)$.
    \end{itemize}
    There exists a positive integer $\nu_{n}$ such that 
    $\|\hat{\phi}_{n,\nu_{n}}-\hat \phi_{n}\|_K < 1/n$.
    Set $\phi_{n}\coloneqq\hat{\phi}_{n,\nu_{n}}$. 
    Then $\{\phi_{n}\}_{n}\subset\mathcal O(M,\B)$ satisfies (i), (ii) and (iii).
  \end{proof}

  The following is the last proposition required to prove \cref{prop:per.prerv.full.approx.}

  \begin{proposition}\label{prop:subspecies.of.inverse.func.thm}
    Let $F:\C^N\to\C^N$ $($resp. $F_{n}:\C^N\to\C^N$ $(n\in\Z_{>0}))$ be holomorphic maps and 
    let $J:\C^N\to \M(N,\C)$ $($resp. $J_{n}:\C^N\to \M(N,\C))$ be Jacobian matrices of 
    $F$ $($resp. $F_{n})$. If $J(0)$ and $J_{n}(0)$ are invertible matrices and 
    $\{J_{n}\}_{n}$ converges uniformly to $J$ on $B=\{t\in\C^N:\|t\|<1\}$, 
    then there exists $\delta > 0$ such that
    for every $n$, there exist open neighborhoods $V_{n}$ of the origin $0\in\C^N$ and 
    $W_{n}$ of $F_{n}(0)\in\C^N$ satisfying the following conditions:
    \begin{enumerate}
      \item 
      the restriction $F_{n}:V_{n}\to W_{n}$ is biholomorphic, and
      
      \item
      $B_{\delta}(F_{n}(0))\subset W_{n}$.
    \end{enumerate}
  \end{proposition}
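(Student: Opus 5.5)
This is a quantitative inverse function theorem with constants uniform in $n$. The plan is to fix one radius $r$ on which every $F_n$ is injective with a uniformly controlled derivative, and then show that the image of $B_r(0)$ contains a ball of fixed radius about $F_n(0)$.

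\emph{Step 1 (uniform Jacobian control).} Set $L\coloneqq J(0)^{-1}$ and $c\coloneqq\|L\|^{-1}>0$. By continuity of $J$, choose $r\in(0,1)$ with $\|J(t)-J(0)\|\le c/4$ on $\overline{B_r(0)}$. By uniform convergence, choose $n_1$ such that $\|J_n-J\|_B\le c/4$ for $n\ge n_1$. Then for $n\ge n_1$ and $t\in\overline{B_r(0)}$ we have $\|J_n(t)-J(0)\|\le c/2$.

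\emph{Step 2 (injectivity and lower bound).} For $t,t'\in B_r(0)$, write
\[
F_n(t)-F_n(t')=\int_0^1 J_n(t'+\tau(t-t'))\,d\tau\,(t-t').
\]
This gives $\|F_n(t)-F_n(t')-J(0)(t-t')\|\le (c/2)\|t-t'\|$. Since $\|J(0)(t-t')\|\ge c\|t-t'\|$, it follows that
\[
\|F_n(t)-F_n(t')\|\ge (c/2)\|t-t'\|.
\]
So $F_n$ is injective on $B_r(0)$ and its Jacobian is invertible there (it lies within $c/2$ of $J(0)$). Hence $F_n$ maps $V\coloneqq B_r(0)$ biholomorphically onto the open set $F_n(V)$, using the holomorphic inverse function theorem together with injectivity.

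\emph{Step 3 (the image contains a uniform ball).} This is the main point. Fix $w$ with $\|w-F_n(0)\|<\delta\coloneqq cr/4$. Consider the map
\[
T(t)=t-L\bigl(F_n(t)-w\bigr)\quad\text{on }\overline{B_{r'}(0)},\ r'<r .
\]
Its derivative is $I-LJ_n(t)=L(J(0)-J_n(t))$, which has norm at most $\|L\|c/2=1/2$, so $T$ is a contraction with constant $1/2$. Moreover
\[
\|T(0)\|\le\|L\|\,\|F_n(0)-w\|<\delta/c=r/4,
\]
so $\|T(t)\|\le r/4+\|t\|/2<r'$ once $r'$ is close to $r$. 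Hence $T$ maps $\overline{B_{r'}(0)}$ into itself, and the Banach fixed point theorem gives $t$ with $F_n(t)=w$. Therefore $B_\delta(F_n(0))\subset F_n(V)$ for all $n\ge n_1$.

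\emph{Step 4 (the finitely many remaining indices).} For $n<n_1$, use the ordinary inverse function theorem, since $J_n(0)$ is invertible by hypothesis. This gives neighborhoods $V_n,W_n$ with $W_n\supset B_{\delta_n}(F_n(0))$ for some $\delta_n>0$. Replace $\delta$ by $\min\{\delta,\delta_1,\dots,\delta_{n_1-1}\}$, set $W_n=F_n(V_n)$, and take $V_n=B_r(0)$ for $n\ge n_1$.
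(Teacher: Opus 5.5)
Your proof is correct. Its core is the same contraction-mapping argument as the paper's. The paper's iteration $t_i=u+t_{i-1}-\tilde F_n(t_{i-1})$ is exactly the Picard iteration for your map $T$, written in the normalized coordinates $\tilde F_n(t)=J_n(0)^{-1}(F_n(t)-F_n(0))$, with $J_n(0)^{-1}$ in place of your $J(0)^{-1}$. Likewise, the paper's injectivity bound $\|\tilde F_n(t)-\tilde F_n(u)\|\ge\frac{1}{2}\|t-u\|$ is the analogue of your Step 2.

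Where you genuinely differ is in how uniformity in $n$ is obtained.

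\begin{itemize}
\item \emph{The paper} normalizes each $F_n$ by its own $J_n(0)^{-1}$. It proves that the whole family $\{J_n\}$ is equicontinuous at $0$ via Cauchy estimates, using only $\sup_n\|J_n\|_B<\infty$. Combined with $\sup_n\|J_n(0)^{-1}\|<\infty$, this gives a single radius $2\tilde\delta$ that works for every $n$ at once. There is no exceptional set of indices, and $V_n=B_{2\tilde\delta}(0)$ is the same for all $n$.
\item \emph{You} normalize by the fixed matrix $J(0)^{-1}$. You get $\|J_n(t)-J(0)\|\le c/2$ near $0$ from continuity of the limit $J$ plus uniform convergence, which holds only for $n\ge n_1$. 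You then handle the finitely many remaining indices with the classical inverse function theorem.
\end{itemize}

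What each route buys:
\begin{itemize}
\item Your route is shorter and gives the explicit constant $\delta=cr/4$ for large $n$. It uses no holomorphy in the estimates, so it proves the analogous statement for $C^1$ maps, with diffeomorphisms in place of biholomorphisms.
\item The paper's route needs holomorphy for the Cauchy estimates. In exchange, it effectively uses only boundedness of $\{J_n\}$ on $B$ and of $\{J_n(0)^{-1}\}$, not convergence, and it treats all indices uniformly.
\end{itemize}

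A minor point: in Step 3, any $r'\in[r/2,r)$ works, since $\|T(t)\|<r/4+r'/2\le r'$.
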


  \begin{proof}
    We set $L\coloneqq\sup_{n\in\Z_{>0}}\left\|J_{n}\right\|_{B} < \infty$ and 
    denote the $(i,j)$ component of $J_{n}$ by $(J_{n})_{ij}$. 
    For all $t\in B$, we have that 
    \begin{equation}
      |(J_{n})_{ij}(t)|\leq\left\|J_{n}\right\|_{B}\leq L \qquad(n\in\Z_{>0}).
    \end{equation}
    Fix $r>0$ with $D_r(0)^N \subset B$ and 
    $t=(t_1, \dots, t_N)\in D_{r/2}(0)^N$. 
    Then, the sets $C_k\coloneqq \partial D_{r/2}(t_k)$ $(k=1, \dots, N)$ satisfy
    $\prod_{k=1}^NC_k\subset B$. Thus, the Cauchy's integral formula implies that 
    \begin{align}
      \left|\frac{\partial(J_{n})_{ij}}{\partial t_k}(t)\right|
      & = \left(\frac{1}{2\pi}\right)^N\left|\int_{C_1} \dots \int_{C_N}
           \frac{(J_{n})_{ij}(\tau_1, \dots, \tau_N)}
             {(\tau_1-t_1)\cdots(\tau_k-t_k)^2\cdots (\tau_N-t_N)}d\tau_1\cdots d\tau_N\right|\\
      & \leq \left(\frac{1}{2\pi}\right)^N\frac{L}{(r/2)^{N+1}}
          \left(\int_{C_1}|d\tau_1|\right)\cdots\left(\int_{C_N} |d\tau_N|\right)\\
      & = \left(\frac{1}{2\pi}\right)^N\frac{2^{N+1}L}{r^{N+1}}(\pi r)^N
        = \frac{2L}{r}.
    \end{align}
    Hence if $t\in D_{r/2}(0)^N$, then we get 
    \begin{align}
      \left|(J_{n})_{ij}(t)-(J_{n})_{ij}(0)\right|
      &\leq\sum_{k=1}^{N}|(J_{n})_{ij}(t_1, \dots, t_{k-1},t_{k},0, \dots, 0) \\
      & \hspace{100pt} - (J_{n})_{ij}(t_1, \dots, t_{k-1},0,0, \dots, 0)|\\
      &= \sum_{k=1}^{N}
          \left|\int_0^{t_{k}}
           \frac{\partial(J_{n})_{ij}}{\partial\tau_k}(t_1, \dots, t_{k-1},\tau_{k},0, \dots, 0)
             d\tau_{k}\right|\\
      &\leq\sum_{k=1}^{N}\frac{2L}{r}|t_k|
      \leq\left(\sum_{k=1}^N\left(\frac{2L}{r}\right)^2\right)^{\frac{1}{2}}
        \left(\sum_{k=1}^N|t_k|^2\right)^{\frac{1}{2}}
      =\frac{2L\sqrt{N}}{r}\|t\|.
    \end{align}
    Therefore, for every $\epsilon>0$, if $\|t\|<\min\{r\epsilon/(2L\sqrt{N}),\,r/2\}$, then 
    \begin{equation}
      \left|(J_{n})_{ij}(t)-(J_{n})_{ij}(0)\right|<\epsilon\qquad(n\in\Z_{>0})
    \end{equation}
    holds. This guarantees $\{(J_{n})_{ij}\}_{n}$ is equicontinuous at the origin $0\in B$. 
    We define $\tilde{F}_{n}:\C^{N}\to\C^{N}$ by 
    \begin{equation}
      \tilde{F}_{n}(t)\coloneqq J_{n}(0)^{-1}(F_{n}(t)-F_{n}(0)).
    \end{equation}
    The equicontinuity of $\{(J_{n})_{ij}\}_{n}$
    gives the equicontinuity of $\{(J\tilde{F}_{n})_{ij}\}_{n}$ at $0\in B$. 
    We choose $\tilde{\delta}>0$ satisfying the following:
    \begin{itemize}
      \item[(a)]
      $B_{2\tilde{\delta}}(0)\subset B$, and
    
      \item[(b)]
      for all $n$, if $t\in B_{2\tilde{\delta}}(0)$ then 
      $|(J\tilde{F}_{n})_{ij}(t)-(J\tilde{F}_{n})_{ij}(0)|<\dfrac{1}{2N}.$
    \end{itemize}
    Fix $n\in\Z_{>0}$. Let us show that 
    \begin{equation}\label{eq.6.5}
      \|(\tilde{F}_{n}(t)-t)-(\tilde{F}_{n}(u)-u)\|\leq\frac{1}{2}\|t-u\|
    \end{equation}
    holds for all $t,u\in\tilde{V}_{n}\coloneqq B_{2\tilde{\delta}}(0)$. 
    Suppose that $k_{n}=(k^1_{n}, \dots, k^{N}_{n}):B\to\C^{N}$ is a holomorphic map defined by 
    $k_{n}(t)\coloneqq \tilde{F}_{n}(t)-t$. If $t,u\in\tilde{V}_{n}$, 
    then the above condition (b) yields
    \begin{align}
      |k^i_{n}(t)-k^i_{n}(u)|
       &=\left|\int_u^t dk^i_n\right| 
          \le\int_0^1\sum_{j=1}^{N}\left|(Jk_{n})_{ij}(u+s(t-u))\cdot (t_j-u_j)\right| ds \\
       &= \int_0^1
           \sum_{j=1}^{N}
            \left|(J\tilde{F}_{n})_{ij}(u+s(t-u))-J\tilde{F}_n(0)\right|\cdot |t_j-u_j| ds \\
       &< \sum_{j = 1}^N\frac{1}{2N} |t_j-u_j|
           \le \left(\sum_{j=1}^{N}\left(\frac{1}{2N}\right)^2\right)^\frac{1}{2}\|t-u\|
       = \frac{1}{2\sqrt{N}}\|t-u\|.
    \end{align}
    This implies $\left\|k_{n}(t)-k_{n}(u)\right\|^2 < (1/4)\|t-u\|^2$.
    Now, we obtain $\eqref{eq.6.5}$. Moreover, 
    \begin{equation}\label{eq.3.8}
      \|\tilde{F}_{n}(t)-\tilde{F}_{n}(u)\|
      \geq\|t-u\| - \|(\tilde{F}_{n}(t)-t)-(\tilde{F}_{n}(u)-u) \|
      \geq\frac{1}{2}\|t-u\|
    \end{equation}
    holds for all $t, u\in \tilde V_{n}$. 
    Let $\tilde{W}_{n}\coloneqq\tilde{F}_{n}(\tilde{V}_{n})$. 
    The inequality  $\eqref{eq.3.8}$ ensures that the map 
    $\tilde{F}_{n}:\tilde{V}_{n}\to\tilde{W}_{n}$ is injective, and thus
    it is biholomorphic. Next, let us prove
    \begin{equation}\label{eq.6.6}
      B_{\tilde{\delta}}(0)\subset\tilde{W}_{n}\qquad(n\in\Z_{>0}).
    \end{equation}
    It is sufficient to show that for fixed $u\in B_{\tilde{\delta}}(0)$, 
    there exists $t\in\tilde{V}_{n}$ such that $u=\tilde{F}_{n}(t)$. 
    We define a sequence $\{t_i\}_{i}\subset\C^{N}$ by 
    \begin{equation}\label{eq.6.7}
      t_1=0,\qquad t_i=u+t_{i-1}-\tilde{F}_{n}(t_{i-1})\quad(i\geq2).
    \end{equation}
    Then, $\{t_i\}_{i}\subset\tilde{V}_{n}$ is shown by induction. 
    Indeed, $t_1=0\in\tilde{V}_{n}$ and $t_2=u\in\tilde{V}_{n}$ hold. 
    Assume $t_1$, $t_2$, \dots,  $t_{i-1}\in\tilde{V}_{n}$ for $i\geq3$. 
    If $3\leq j\leq i$, then $\eqref{eq.6.5}$ gives
    \begin{align}
      \|t_j-t_{j-1}\|
       &=\left\|\left(u+t_{j-1}-\tilde{F}_{n}(t_{j-1})\right) - 
          \left(u+t_{j-2}-\tilde{F}_{n}(t_{j-2})\right)\right\|\\
       &\leq\frac{1}{2}\left\|t_{j-1}-t_{j-2}\right\|
        \leq\left(\frac{1}{2}\right)^{j-2}\|t_2-t_1\|
        =\left(\frac{1}{2}\right)^{j-2}\|u\|.
    \end{align}
    Hence, we get
    \begin{equation}
      \|t_i\|=\|t_i-t_1\|
      \leq\sum_{j=2}^i\|t_j-t_{j-1}\|
      \leq\sum_{j=2}^i\left(\frac{1}{2}\right)^{j-2}\|u\|<2\tilde{\delta}.
    \end{equation}
    From this, it follows that $\{t_i\}_{i}\subset\tilde{V}_{n}$. 
    Let $i < j$. Then we have
    \begin{equation}
      \|t_i-t_j\|
       \leq \sum_{k=i+1}^{j}\|t_k - t_{k-1}\|
       \leq \sum_{k=i+1}^{j}\left(\frac{1}{2}\right)^{k-2}\|u\|
       < \left(\frac{1}{2}\right)^{i-2}\tilde{\delta}.
    \end{equation}
    Thus $\{t_i\}_i$ is a Cauchy sequence. 
    The limit $t\coloneqq\lim_{i\rightarrow\infty}t_i$ of $\{t_i\}_i$ satisfies 
    \begin{equation}
      \|t\|
       = \lim_{i\rightarrow\infty}\|t_i\|
       \leq \lim_{i\rightarrow\infty}\sum_{j=2}^i\left(\frac{1}{2}\right)^{j-2}\|u\|
       = 2 \, \|u\|
       < 2 \, \tilde{\delta}.
    \end{equation}
    This means that $t\in\tilde{V}_{n}$. 
    Furthermore, the limit on both sides of the second equality 
    $\eqref{eq.6.7}$ yields $u=\tilde{F}_{n}(t)$. Now, we obtain $\eqref{eq.6.6}$.
    The map 
    $G_{n} : \C^{N} \to \C^{N}$ given by 
    $G_{n}(t)\coloneqq J_{n}(0)^{-1}\left(t-F_{n}(0)\right)$ satisfies 
    $\tilde{F}_{n}=G_{n}\circ F_{n}$. Since $G_{n}$ is biholomorphic, 
    $V_{n}\coloneqq\tilde{V}_{n}$ and $W_{n}\coloneqq G_{n}^{-1}(\tilde{W}_{n})$ satisfy (i). 
    Finally, let us show (ii). By $\eqref{eq.6.6}$, it is sufficient to show that 
    there exists $\delta>0$ such that 
    $B_{\delta}(F_{n}(0))\subset G_{n}^{-1}(B_{\tilde{\delta}}(0))$ $(n\in\Z_{>0})$.
    Since the sequence $\{J_{n}(0)^{-1}\}_{n}$ converges to $J(0)^{-1}$, 
    there is a positive number $C>0$ such that $\|J_{n}(0)^{-1}\|<C$ holds for all $n$. 
    We set $\delta\coloneqq\tilde{\delta}/C$. If $t\in B_{\delta}(F_{n}(0))$ then we get
    $\|G_{n}(t)\| = \|J_{n}(0)^{-1}(t-F_{n}(0))\| < C\,\|t-F_{n}(0)\| < \tilde{\delta}$.
    Therefore $t$ is an element of $G_{n}^{-1}(B_{\tilde{\delta}}(0))$.
  \end{proof} 

  \begin{proof}[Proof of \cref{prop:per.prerv.full.approx.}]
    We prove this proposition in four steps.
    
    \emph{Step 1}: Let us show that if $\phi \in \mathcal{A}(S,\B)$ satisfies
    $\Sigma(\phi)\coloneqq\mathrm{Span}_{\C} \, \phi(S)=\C^3$, then there exists 
    $\{F_{n}\}_{n} \subset\mathcal{O}(M,\B)$ with conditions (i)--(v).
    By \cref{prop:Mergelyan.for.B-valued.map}, there is a sequence 
    $\{\phi_{n}\}_{n}\subset\mathcal{O}(M,\B)$ satisfying the following properties:
    \begin{itemize}
     \item 
     $\|\phi_{n}-\phi\|_S\rightarrow 0 \quad(n\rightarrow\infty)$,
     
     \item 
     $\phi_{n}$ agrees with $\phi$ at every point of $A$, and
     
     \item 
     $\phi_{n}$ agrees with $\phi$ to order $s$ at every point of $A\cap\Int(S)$.
    \end{itemize}
    Since $\Sigma(\phi)=\C^3$ holds, $\phi_{n}$ is a full map for sufficiently large $n$. 
    By retaking index numbers, we may assume $\phi_{n}$ is full for each $n\in\Z_{>0}$. 
    \cref{lem.rest.full} implies $\mathrm{Span}_{\C} \, \phi_{n}(I_i)=\C^3$, and in particular, 
    $\phi_{n}(I_i)$ is not contained in any complex line passing through the origin. 
    From \cref{prop:existence.Prd.dominating.spray}, 
    there is a period dominating spray $\tilde\Phi_{n}$ with core $\phi_{n}|_S$ 
    that is expressed as follows, using holomorphic functions 
    $\tilde{h}_{ik,n}$ and $g_n$ depending on $n$ (see \eqref{e.q.period.dominating}):
    \begin{equation}
      \tilde\Phi_{n}(p,t_{11}, \dots, t_{l3})
       = \psi^{g_n(p) \, \tilde{h}_{11,n}(p) \, t_{11}}_{11}\circ\cdots\circ
          \psi^{g_n(p) \, \tilde{h}_{l3,n}(p) \, t_{l3}}_{l3}\left(\phi_{n}(p)\right),
    \end{equation}
    where the functions $\tilde{h}_{11,n}, \dots, \tilde{h}_{l3,n}$ and $g_n$ are chosen 
    as holomorphic functions defined on the entire $M$ since $S$ and 
    $C=\bigcup_{i=1}^lC_i$ are Runge in $M$. 
    Thus $\tilde\Phi_{n}$ is a holomorphic map from $M\times\C^{3l}$ into $\B$. 
    We fix sufficiently large $n_0$ and define $\Phi_{\phi}\in\mathcal A(S\times\C^{3l},\B)$ and
    $\Phi_{n}\in\mathcal O(M\times\C^{3l},\B)$ $(n\ge n_0)$ by 
    \begin{align}
      \Phi_{\phi}(p,t_{11}, \dots, t_{l3})
       &=\psi^{g_{n_0}(p) \, \tilde{h}_{11,n_0}(p) \, t_{11}}_{11}\circ\cdots\circ
          \psi^{g_{n_0}(p) \, \tilde{h}_{l3,n_0}(p) \, t_{l3}}_{l3}\left(\phi(p)\right),\\
      \Phi_{n}(p,t_{11}, \dots, t_{l3})
       &=\psi^{g_{n_0}(p)\tilde{h}_{11,n_0}(p)t_{11}}_{11}\circ\cdots\circ
          \psi^{g_{n_0}(p) \, \tilde{h}_{l3,n_0}(p) \, t_{l3}}_{l3}\left(\phi_{n}(p)\right).
    \end{align}
    By \cref{rem.6.9}, there is $\epsilon>0$ such that if 
    $\phi$ and $\phi_{n}$ satisfy 
    \begin{equation}\label{eq.314}
      \|\phi-\phi_{n_0}\|_S  < \epsilon \quad\text{and}\quad
      \|\phi_{n}-\phi_{n_0}\|_S<\epsilon,
    \end{equation}
    then $\Phi_{\phi}$ (resp. $\Phi_{n}$) is a period dominating spray 
    with core $\phi$ (resp. $\phi_{n}$). We may assume $n_0$ is sufficiently large 
    to satisfy \eqref{eq.314}. 
    Let $\mathcal{F}, \, \mathcal{F}_{n} : \C^{3l}\to\C^{3l}$ $(n\ge n_0)$ be 
    holomorphic maps defined by 
    \begin{equation}
      \mathcal F(t)\coloneqq\Prd(\Phi_{\phi}(\cdot,t)),\qquad
      \mathcal F_{n}(t)\coloneqq\Prd(\Phi_{n}(\cdot,t)).
    \end{equation}
    Then, the partial derivatives of $\mathcal F$ and $\mathcal F_{n}$ are given by
    \begin{align}
      &\frac{\partial\mathcal F}{\partial t_{ik}}
        =\left(\int_{C_j}\frac{\partial}{\partial t_{ik}}
           \left(\exp(t_{11}a_{11}^{n_0}A_{11}^{n_0})\cdots
            \exp(t_{l3}a_{l3}^{n_0}A_{l3}^{n_0})\phi\right)\theta\right)_{j=1,...,l}\\
        &\ =\left(\int_{C_j}\exp(t_{11}a_{11}^{n_0}A_{11}^{n_0})\cdots
            a_{ik}^{n_0}A_{ik}^{n_0}\exp(t_{ik}a_{ik}^{n_0}A_{ik}^{n_0})\cdots
             \exp(t_{l3}a_{l3}^{n_0}A_{l3}^{n_0})\phi\theta\right)_{j=1,...,l},\\
      &\frac{\partial\mathcal F_{n}}{\partial t_{ik}}
        =\left(\int_{C_j}\frac{\partial}{\partial t_{ik}}
           \left(\exp(t_{11}a_{11}^{n_0}A_{11}^{n_0})\cdots
            \exp(t_{l3}a_{l3}^{n_0}A_{l3}^{n_0})\phi_{n}\right)\theta\right)_{j=1,...,l}\\
        &\ =\left(\int_{C_j}\exp(t_{11}a_{11}^{n_0}A_{11}^{n_0})\cdots
            a_{ik}^{n_0}A_{ik}^{n_0}\exp(t_{ik}a_{ik}^{n_0}A_{ik}^{n_0})\cdots
             \exp(t_{l3}a_{l3}^{n_0}A_{l3}^{n_0})\phi_{n}\theta\right)_{j=1,...,l},
    \end{align}
    where we set $a_{ik}^{n_0}\coloneqq gh_{ik,n_0}$ and 
    $A_{ik}^{n_0}\in \M(3,\C)$ $(1\leq i \leq l, \, k = 1, 2, 3)$ are suitable matrices. 
    Let $B\coloneqq\{t\in\C^{3l}:\|t\|<1\}$. We have a constant $C>0$ such that 
    \begin{equation}
      \left\|
        \frac{\partial\mathcal F_{n}}{\partial t}-\frac{\partial\mathcal F}{\partial t}
         \right\|_B
      \leq C\|\phi_{n}-\phi\|_S \quad \text{for all $n\ge n_0$}.
    \end{equation} 
    Thus the sequence of Jacobian matrices $\{\partial_t\mathcal F_{n}\}_{n}$ 
    uniformly converges to $\partial_t\mathcal F$ on $B$. 
    By \cref{prop:subspecies.of.inverse.func.thm}, 
    we obtain $\delta > 0$, an open neighborhood $V_{n}$ of the origin $0\in\C^{3l}$, and 
    an open neighborhood $W_{n}$ of $\mathcal F_{n}(0)$ satisfying the following conditions:
    \begin{itemize}
      \item 
      the restriction $\mathcal F_{n}:V_{n}\to W_{n}$ is biholomorphic, and
      
      \item  
      $B_{\delta}(\mathcal{F}_{n}(0))\subset W_{n}$ for all $n\ge n_0$.
    \end{itemize}
    By the continuity of $\mathcal F_{n}^{-1}$ at $\mathcal{F}_n(0)$, 
    there is a positive number $r>0$ such that
    \begin{equation}\label{eq.6.10}
      \|\mathcal F_{n}^{-1}(u)\| < \frac{1}{n} 
      \quad \text{for all } u\in B_r(\mathcal{F}_{n}(0)).
    \end{equation} 
    Let $r'\coloneqq\min\{\delta,r\}$. We can take $n_1\in\Z_{>n_0}$ so that 
    if $n\ge n_1$ then $\|\mathcal{F}_{n}(0)-\mathcal{F}(0)\| = \|\Prd(\phi_{n})-\Prd(\phi)\|<r'$. 
    Hence $\Prd(\phi)$ is an element of $B_{\delta}(\mathcal{F}_{n}(0))\subset W_{n}$ 
    for all $n\ge n_1$. This gives $t_{n}\in V_{n}$ with $\mathcal F_{n}(t_{n})=\Prd(\phi)$. 
    Also, the fact that $\mathcal{F}_{n}(t_{n})\in B_r(\mathcal{F}_{n}(0))$ and 
    \eqref{eq.6.10} imply that
    \begin{equation}
      \|t_{n}\|
      = \|\mathcal F_{n}^{-1}\left(\mathcal F_{n}(t_{n})\right)\|
      < \frac{1}{n} \qquad (n\ge n_1).
    \end{equation}
    Thus, the sequence $\{t_{n}\}_{n}$ converges to $0\in\C^{3l}$.
    We set $F_{n}\coloneqq\Phi_{n}(\cdot,t_{n})$. 
    The definitions of $\Phi_{n}$ and $\Phi_{\phi}$ yield 
    the existence of positive numbers $C_1, \, C_2>0$ such that 
    $\|F_{n}-\phi\|_S
    \le \|\Phi_n(\cdot, t_n) - \Phi_{\phi}(\cdot, t_n)\|_S + \|\Phi_{\phi}(\cdot, t_n) - \phi\|_S
    \le C_1 \,\|\phi_{n}-\phi\|_S + C_2 \, \|t_n\| \rightarrow 0$.
    Since $\Sigma(\phi) = \C^3$ holds, $\Phi_{n}(\cdot,t_{n})$ is a full map 
    for sufficiently large $n$. Moreover, the properties for 
    the sequence $\{\phi_{n}\}_{n}$ and \cref{prop:existence.Prd.dominating.spray} guarantee that 
    $F_{n}$ agrees with $\phi$ at every point of $A$ and agrees with $\phi$ to order $s$ 
    at every point of $A\cap\Int(S)$. 
    Therefore, $\{F_{n}\}_{n}\subset\mathcal O(M,\B)$ satisfies conditions (i)--(v).

    \emph{Step 2}: Let us show that for every $\phi \in \mathcal{A}(S,\B)$
    with $\Sigma(\phi)\subsetneq\C^3$, there exists a sequence 
    $\{g_{n}\}_{n} \subset \mathcal A(S,\B)$ satisfying 
    $\dim\Sigma(\phi)<\dim\Sigma(g_{n})$, and conditions (ii)--(v).
    Let $ P \subset S\setminus A$ be a set consisting of one or two points such that 
    $\{\phi(p) : p \in P\}$ is a basis of $\Sigma(\phi)$. There exist a point 
    $p_0\in S\setminus(A \cup P)$ and a holomorphic map $V:\C^3\to\C^3$ satisfying
    $V(\phi(p_0))\notin\Sigma(\phi)$ and $V(w)\in T_w\B$ for all $w\in\B$. 
    Indeed, if $\dim\Sigma(\phi) = 1$, then every point $p_0\in S\setminus(A\cup P)$ satisfies 
    $T_{ \phi(p_0)}\B\neq\Sigma(\phi)$. We fix $p_0$ and take 
    $v\in T_{\phi(p_0)}\B\setminus\Sigma(\phi)$. 
    \cref{lem:vec.fld.and.flow.maxface} implies that there exists a holomorphic map 
    $V:\C^3 \to \C^3$ with $V(\phi(p_0)) = v \notin \Sigma(\phi)$ and 
    $V(w)\in T_w\B$ for all $w\in\B$. If $\dim\Sigma(\phi) = 2$, then we can take two points 
    $p,p'\in S\setminus(A \cup P)$ such that $\phi(p)$ and $\phi(p')$ are 
    $\C$-linearly independent. This and \cref{lem:tan.sp.of.B} yields 
    $T_{\phi(p)}\B\neq T_{\phi(p')}\B$. We choose 
    $v\in (T_{\phi(p)}\B\cup T_{ \phi(p')}\B)\setminus\Sigma(\phi)$. 
    If $v\in T_{\phi(p)}\B$, then we set $p_0\coloneqq p$, and if $v\in T_{\phi(p')}\B$, 
    then we set $p_0\coloneqq p'$. Again \cref{lem:vec.fld.and.flow.maxface} guarantees 
    the existence of a suitable holomorphic map $V:\C^3\to\C^3$. 
    Let $\psi:\C\times\C^3\to\C^3$ be a holomorphic flow of $V$ 
    obtained by the matrix exponential map.
  
    By \cref{fact:Weierstrass.thm.op.Riem.surf}, we can choose a holomorphic function 
    $h\in\mathcal{O}(S)$ that has zeros only at $A \cup P$ and 
    vanishes to order $s$ at every point of $A$.
    Given $\xi\in\mathcal{O}(M)$, we define $\Theta(\xi)\in\mathcal A(S,\B)$ by 
    $\Theta(\xi) \coloneqq \psi(h \, \xi, \phi) 
      = \phi+  h \, \xi \cdot (V\circ \phi) + o(|h \,\xi|) $.
    Since $\mathcal{O}(M)$ is an infinite-dimensional vector space, 
    we can choose linearly independent functions 
    $\xi_1, \xi_2, \dots, \xi_{3l +1} \in \mathcal{O}(M)$.
    Let $\Xi : \C^{3l + 1} \to \C^{3l}$ be a holomorphic map given by
    $\Xi(z_1, \dots, z_{3l + 1}) \coloneqq
      \Prd\left(\Theta(\sum_{i = 1}^{3l + 1} z_i \, \xi_i) \right) - \Prd(\phi)$.
    Then, the origin $0 \in \C^{3l + 1}$ is contained in the analytic set 
    $X \coloneqq\{z \in \C^{3 l + 1} : \Xi(z) = 0\}$. 
    Hence, we can take
    $\{z_n = (z_{n, 1}, \dots, z_{n, 3l+1})\}_n \subset X \setminus \{0\}$ 
    such that $z_n \rightarrow 0$.
    The functions $\eta_{n} \coloneqq \sum_{i = 1}^{3l + 1}z_{n,i} \, \xi_i$ $(n\in\Z_{>0})$
    satisfy $\|\eta_{n}\|_{\tilde S} \rightarrow 0$ and 
    $\Prd(\Theta(\eta_{n})) - \Prd(\phi) = \Xi(z_n) = 0$. 
    We set $g_{n}\coloneqq\Theta(\eta_{n})$. It satisfies (iii) and the following:
    \begin{equation}\label{eq.6.8.5}
      g_{n}(p) = \psi(h(p) \, \eta_{n}(p), \phi(p))=\phi(p) \qquad (p \in P).
    \end{equation}
    This implies that $\phi(P)\subset g_{n}(S)$, i.e., $\Sigma(\phi)\subset\Sigma(g_{n})$. 
    Since $\eta_{n}$ is holomorphic at $p_0\in S$, 
    we can assume $\eta_{n}(p_0)\neq0$ by moving $p_0$ slightly under the conditions $h(p_0)\neq0$ 
    and $V(\phi(p_0)) \notin \Sigma(\phi)$. 
    It follows that $\phi(p_0) + h(p_0) \eta_{n}(p_0) V(\phi(p_0)) \notin \Sigma(\phi)$. 
    Since $g_n(p_0) \approx \phi(p_0) + h(p_0) \eta_{n}(p_0) V(\phi(p_0))$ 
    for sufficiently large $n$, we obtain $g_n(p_0) \notin \Sigma(\phi)$. 
    Thus, $\Sigma(\phi) \subsetneq \Sigma(g_{n})$, 
    which implies $\dim\Sigma(\phi) < \dim\Sigma(g_{n})$.
    We take a matrix $B\in \M(3,\C)$ and a holomorphic map $\beta:\C\to \M(3,\C)$ satisfying 
    $g_{n} = (E_3 + \eta_{n}\,h \, B + \eta_{n}^2\, h^2 \, \beta(\eta_{n}h))\cdot \phi$ 
    and set 
    \begin{equation}
      C_1 \coloneqq \|h\|_S,\qquad 
      C_2 \coloneqq \sup\{\|\beta(z)\|:z\in\C,\,|z|\leq C_1\}.
    \end{equation}
    Since $|\eta_{n}(p) \, h(p)| \leq C_1$ holds for all $p\in S$ and 
    sufficiently large $n\in\Z_{>0}$, we have  
    $\{\eta_{n}(p) \, h(p) : p \in S,\,n : \text{sufficiently large}\}
      \subset\{z\in\C:|z|\leq C_1\}$. 
    Hence it holds that
    $\|\beta(\eta_{n} \, h) \, \phi\|_S \leq C_2\,\|\phi\|_S$.
    Thus, for all $p\in S$, we obtain 
    \begin{align}
      \|g_{n}(p)- \phi(p)\|
      &\leq|\eta_{n}(p)h(p)| 
        \left\{\|B \, \phi(p)\|+ |h(p)| \, \|\beta(\eta_{n}(p)h(p))\phi(p)\|\right\}\\
      &\leq C_1\, \|\eta_n\|_S \, 
        \left(\|B\| \, \|\phi\|_S + C_1 \, C_2 \, \| \phi\|_S \right).
    \end{align}
    This and $\|\eta_n\|_S \rightarrow 0$ mean that $\{g_{n}\}_{n}$ satisfy (ii). Since 
    $g_{n} - \phi=h\cdot(\eta_{n}V \circ \phi + \eta_{n}^2 h \beta(\eta_{n}h))$ 
    vanishes on $A$, and to order $s$ on $A\cap\Int(S)$, $g_{n}$ satisfies (iv) and (v). 

    \emph{Step 3}: Let us show that for each $j\in\{1, 2\}$, every map 
    $\phi\in X_j\coloneqq\{\hat \phi\in\mathcal{A}(S,\B):\dim\Sigma(\hat{\phi})= j\}$ 
    is a limit of a sequence $\{G_{n}\}_{n}\subset\mathcal A(S,\B)$ satisfying $\Sigma(G_{n})=\C^3$ 
    and the conditions (ii)--(v).
    If $j = 2$, then Step 2 implies the conclusion. 
    Let $\phi\in X_1$. By Step 2, we can take a sequence $\{g_{n}\}_{n}\subset\mathcal{A}(S,\B)$ 
    with $\dim\Sigma(\phi)<\dim\Sigma(g_{n})$ and the conditions (ii)--(v). 
    Fix $n\in\Z_{>0}$. If $\Sigma(g_n) = \C^3$, then we set $g_{n, \nu} \coloneqq g_n$ for all 
    $\nu \in \Z_{>0}$. If $g_{n}\in X_2$, then we take a sequence 
    $\{g_{n,\nu}\}_{\nu}\subset\mathcal{A}(S,\B)$ obtained by applying Step 2 to $g_n$. 
    In either case, $\{g_{n,\nu}\}_{\nu}$ satisfies the following conditions:
    \begin{itemize}
      \item 
      $\Sigma(g_{n,\nu})=\C^3$ holds for all $\nu\in\Z_{>0}$,
      
      \item 
      $\|g_{n,\nu}-g_{n}\|_S \rightarrow 0\quad(\nu\rightarrow\infty)$,
      
      \item 
      $\Prd(g_{n,\nu})=\Prd(g_{n})=\Prd(\phi)$ holds for all $\nu\in\Z_{>0}$, and
      
      \item 
      $g_{n,\nu} - \phi = (g_{n, \nu} - g_n) + (g_n - \phi)$ vanishes on $A$, and
      to order $s$ on $A \cap \Int(S)$.
    \end{itemize}
    We take a positive integer $\nu_{n}$ such that $\|g_{n,\nu_{n}}-g_{n}\|_S<1/n$ and 
    set $G_{n}\coloneqq g_{n,\nu_{n}}$. This map $G_n$ satisfies
    \begin{equation}
      \|G_{n}-\phi\|_S
      \leq\|G_{n}-g_{n}\|_S + \|g_{n} - \phi\|_S
      \leq\frac{1}{n}+\|g_{n}-\phi\|_S \rightarrow 0.
    \end{equation}
    Moreover, $\{G_{n}\}_{n}$ also satisfies (iii), (iv) and (v).

    \emph{Step 4: Completion of proof.} 
    Let $\phi \in\mathcal{A}(S,\B)$. By Step 1, if $\Sigma(\phi) = \C^3$, 
    then the assertion holds true. Let $\Sigma(\phi) \subsetneq \C^3$, i.e., 
    $\phi \in X_1 \cup X_2$. By Step 3, there is a sequence 
    $\{G_{n}\}_{n}$ in $\mathcal{A}(S,\B)$ satisfying $\Sigma(G_{n})=\C^3$ 
    and conditions (ii)--(v). Fix $n\in\Z_{>0}$. Step 1 gives a sequence 
    $\{G_{n,\nu}\}_{\nu}\subset\mathcal{O}(M,\B)$ satisfying the following:
    \begin{itemize}
      \item 
      $G_{n,\nu}$ is a full map for all $\nu$,
      
      \item 
      $\|G_{n,\nu}- G_{n}\|_S\rightarrow 0\quad(\nu\rightarrow\infty)$,
      
      \item 
      $\Prd(G_{n,\nu})=\Prd(G_{n})=\Prd(\phi)$ holds for all $\nu$, and 
      
      \item 
      $G_{n,\nu} - \phi = (G_{n, \nu} - G_n) + (G_n - \phi)$ vanishes on $A$, and
      to order $s$ on $A \cap \Int(S)$.
    \end{itemize}
    We choose $\nu_{n}\in\Z_{>0}$ with $\|G_{n,\nu_{n}}-G_{n}\|_S<1/n$ and 
    define holomorphic maps $\phi_{n}:M\to\B$ $(n=1,2,...)$ by 
    $\phi_{n}\coloneqq G_{n,\nu_{n}}$. Then, $\{\phi_{n}\}_{n}\subset\mathcal O(M,\B)$ 
    is the desired sequence.
  \end{proof}
  
\section{Approximation and Interpolation Theorems for Maxfaces}
  In this section, we state and prove the approximation and interpolation theorems for maxfaces
  (\cref{prop:weak.approx.,thm:main}), 
  which correspond to {\cite[Proposition 3.3.2, Theorem 3.6.1]{AFL21}}, and 
  present their corollaries (\cref{cor:main.,cor:dense.im.sing}). 
  Although the proofs of \cref{prop:weak.approx.,thm:main} follow the arguments in \cite{AFL21},
  their assertions incorporate new conditions concerning surface singularities. 
  \cref{cor:main.,cor:dense.im.sing} are results obtained by using these conditions about singularities.
  
  \begin{proposition} \label{prop:weak.approx.}
    Assume that $M$ is an open Riemann surface, 
    $\theta$ is a nonvanishing holomorphic 1-form on $M$,
    $S=K \cup E\subset M$ is a connected admissible set such that 
      $\hml{S} \emb \hml{M}$ $($in particular, $S$ is Runge in $M )$, 
    $A \subset S$ and $\Sigma \subset A \cap \Int(S)$ are finite subsets,
    $\mathcal{C}$ is a homology basis of $S$ 
      obtained by \cref{fact:homology.basis.of.addmissible}, and
    $(f,\phi \,\theta)$ is a generalized maxface from $S$ into $\L$.
    Given a positive number $\epsilon > 0$ and a positive integer $s \in \Z_{>0}$, 
    there exists a full maxface $\tilde{f} : M\to \L$ satisfying the following conditions. 
    \begin{enumerate}
      \item 
      $\|\tilde{f} - f\|_S < \epsilon$, $\|\tilde{\phi} - \phi\|_S < \epsilon$,
      where $\tilde{\phi} = 2 \, \partial \tilde{f} / \theta \in \mathcal{O}(M, \B)$.
      
      \item
      The differences $\tilde{f} - f$ and $\tilde{\phi} - \phi$ vanish 
      at every point of $A$, and to order $s$ at every point of $A \cap \Int(S)$.

      \item 
      $\Flux_{\tilde{f}} = \Flux_f^{\mathcal{C}}$ on $\hml{S} \emb \hml{M}$.

      \item 
      If the maxface $f|_{\Int(S)}$ is $\mathcal{A}$-equivalent to a cuspidal edge
      $($resp. swallowtail, cuspidal cross cap, cuspidal butterfly, cuspidal $S_1^-$ singularity$)$
      at $p \in \Sigma$, then $\tilde{f}$ is also $\mathcal A$-equivalent to a cuspidal edge
      $($resp. swallowtail, cuspidal cross cap, cuspidal butterfly, cuspidal $S_1^-$ singularity$)$
      at $p$.
    \end{enumerate}
  \end{proposition}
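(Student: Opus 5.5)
The plan is to apply \cref{prop:per.prerv.full.approx.} to $\phi$ with a suitably enlarged interpolation set and order, and then integrate. First choose a base point $p_0 \in A$ (or add one to $A$ if $A=\varnothing$). The curves of $\mathcal{C}$ from \cref{fact:homology.basis.of.addmissible} are Jordan curves, each with a private arc $I_i$, and their union $C$ is connected and Runge in a regular neighborhood $S_r$. Since $\hml{S}\emb\hml{M}$, $C$ is also Runge in $M$.

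\textbf{Period conditions and arcs to points of $A$.} Besides matching the periods over $\mathcal{C}$, we must have $\tilde f$ agree with $f$ at each $a\in A$. So we enlarge $\mathcal{C}$ by oriented Jordan arcs $\Gamma_a\subset S$ from $p_0$ to each $a\in A\setminus\{p_0\}$. Each arc is chosen to have a subarc disjoint from all the other curves and arcs, and so that the union stays Runge in $M$; since $S$ is a connected admissible set this can be done after a small isotopy. Apply \cref{prop:per.prerv.full.approx.} to this enlarged family, with finite set $A$ and order $s' \coloneqq s+3$. This gives full $\phi_n\in\mathcal O(M,\B)$ with $\phi_n\to\phi$ on $S$, with $\int_{C_i}\phi_n\theta=\int_{C_i}\phi\theta$ and $\int_{\Gamma_a}\phi_n\theta=\int_{\Gamma_a}\phi\theta$, and with $\phi_n-\phi$ vanishing on $A$ and to order $s'$ on $A\cap\Int(S)$. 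Then set $\tilde f(p)\coloneqq f(p_0)+\Re\int_{p_0}^p\phi_n\theta$.

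\textbf{Checking (i)--(iii).} $\tilde f$ is well defined because $\Re\int_\gamma\phi_n\theta$ depends only on the homology class of $\gamma$, and by $\hml{S}\emb\hml{M}$ every class in $M$ is represented in $S$. On $\mathcal{C}$ the real periods of $\phi_n\theta$ vanish, since they equal those of $\phi\theta$, which vanish by \cref{def:GMG.GNLI}. The imaginary parts give (iii). $\tilde f$ is a maxface since $\phi_n$ is full and \cref{lem:full.then.not.identically.zero} applies. For (i), use $\|\tilde f-f\|_S\le \mathrm{const}\cdot\|\phi_n-\phi\|_S$: on the connected admissible set $S$, paths from $p_0$ can be taken of uniformly bounded length, and the periods agree. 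For (ii), the arc periods give $\tilde f(a)=f(a)$, and at interior points of $A$ the vanishing of $\phi_n-\phi$ to order $s'$ integrates to vanishing of $\tilde f-f$ to order $s'+1\ge s$.

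\textbf{Singularities, condition (iv); the main point.} At $p\in\Sigma\subset A\cap\Int(S)$, write $\phi=(\phi^0,\phi^1,\phi^2)$. The Weierstrass data are recovered as $g=\phi^0/(\phi^1-\I\phi^2)$ (or the analogous quotient) and $\omega=\frac12(\phi^1-\I\phi^2)\theta$, up to the sign convention of \cref{fact:Weierstrass.rep.maxface}. Since $\phi_n$ agrees with $\phi$ to order $s'$ at $p$, their jets of order $s'$ coincide. Where $\phi^1-\I\phi^2$ vanishes at $p$, pass to $1/g$ or use the other quotient, noting that $\phi\neq 0$. In either case $\tilde g$ and $\tilde\omega$ agree with $g,\omega$ to order at least $s'-1\ge 2$ at $p$, provided $s'$ is also taken larger than the zero orders of the denominators at the finitely many points of $\Sigma$ (enlarge $s'$ accordingly). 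The quantities $\alpha,\beta,\gamma$ of \cref{fact:hanntei} involve at most three derivatives of $g$ and two of $\hat\omega$, so their values at $p$ coincide for $f$ and $\tilde f$. Since the criteria in \cref{fact:hanntei} are pointwise conditions on $\alpha,\beta,\gamma$ at $p$, the singularity type is preserved. The delicate part is the bookkeeping of jet orders through the quotient defining $g$ when the denominator vanishes at $p$. Choosing $s'$ large relative to these vanishing orders, and using \cref{lem.sesshoku} to see that zero orders are preserved, should handle it.
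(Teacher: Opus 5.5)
Your overall plan matches the paper's. You enlarge the curve family by paths from a base point to the points of $A$, apply \cref{prop:per.prerv.full.approx.} with interpolation of order at least $3$, integrate, and transfer the singularity type through the jets via \cref{fact:hanntei}.

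\textbf{The gap: the choice of the arcs $\Gamma_a$.} You require each $\Gamma_a$ to carry a subarc disjoint from all other curves and arcs, with the union staying Runge, and you claim a small isotopy achieves this. In general it cannot.

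\begin{itemize}
\item The curves of $\mathcal{C}$ already generate $\hml{M}$. So any cycle of the enlarged union not coming from $\bigcup\mathcal{C}$ can be corrected by a cycle of $\bigcup\mathcal{C}$ to one that is null-homologous in $M$ but not in the union. That means the union has a hole.
\item Hence Runge-ness forces the added arcs to form trees, each meeting $\bigcup\mathcal{C}$ in at most one point.
\item Then an embedded arc from $p_0$ to a point $a\in A\cap\bigcup\mathcal{C}$ either lies entirely in $\bigcup\mathcal{C}$, or runs along the unique tree path from $p_0$, which it shares with every other such arc.
\item So if two points of $A\cup\{p_0\}$ lie on $\bigcup\mathcal{C}$, no admissible choice of the $\Gamma_a$ exists.
\end{itemize}

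Such points cannot always be isotoped off $\bigcup\mathcal{C}$. Let $S=K\cup E$ be a closed disc $K$ with one handle $E$ inside an annulus $M$. Let $\mathcal{C}=\{C\}$ with $E\subset C$, and let $a,a'\in A$ be interior points of $E$. Every arc in $S$ from $a$ to $a'$ either lies in $C$, or leaves $C$ and rejoins it inside the disc $K$, which encloses a hole. So the hypotheses of \cref{prop:per.prerv.full.approx.} fail for your family.

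\textbf{The fix: subdivide.} Let $G=\bigcup\mathcal{C}\cup T$, where $T\subset S$ is a forest reaching the points of $A$ off $\bigcup\mathcal{C}$, each tree meeting $\bigcup\mathcal{C}$ in one point. Declare all branch points and all points of $A$ to be vertices, and apply \cref{prop:per.prerv.full.approx.} to the family of edges of $G$.
\begin{itemize}
\item The interior of each edge is a private arc.
\item $G$ is Runge, since it has no cycles beyond those of $\bigcup\mathcal{C}$.
\item Preserving every edge integral preserves both the periods over $\mathcal{C}$ and $\int\phi\,\theta$ along paths in $G$ from $p_0$ to each $a\in A$, which is exactly what (ii) and (iii) need.
\end{itemize}
This is the role of the paper's splitting of the basis curves at the points of $A$ in steps (a)--(c). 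There too, one should feed the individual pieces into the period map rather than their nested unions $\tilde C_i$, which lack private arcs. With this change, the rest of your argument for (i)--(iii) goes through.

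\textbf{On (iv).} Your bookkeeping for a vanishing denominator is unnecessary. At $p\in\Sigma$ where $f$ has one of the listed singularities, $|g(p)|=1$, so $g$ is finite at $p$. Positivity of $(1+|g|^2)^2\omega\bar\omega$ then forces $\omega_p\neq0$, i.e.\ $\phi^1-\I\phi^2=2\,\omega/\theta\neq0$ near $p$. Thus agreement of $\tilde\phi$ and $\phi$ up to third derivatives at $p$ transfers directly to $g$ and $\hat\omega$, which is all that $\alpha,\beta,\gamma$ involve. Note that $\gamma$ needs three derivatives of $g$, so your ``$\ge 2$'' should read ``$\ge 3$''. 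Interpolation order $\max\{s,3\}$ suffices, as in the paper.
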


  \begin{proof}
    Suppose $K_i$ (resp. $E_k$) is a connected component of $K$ (resp. $E$). 
    Let $a_{i, j},q_i\in K_i$ and $A_{i,j}\subset K_i$ be the points and the arcs chosen in 
    the proof of \cref{fact:homology.basis.of.addmissible}. 
    We enlarge $A$ by adding to it the endpoints of all connected components $E_k$ and $q_1 \in K_1$. 
    Let us construct a collection $\tilde{\mathcal{C}}$ of arcs and closed curves in $S$ 
    in the following way.
    \begin{itemize}
      \item[(a)] 
      Suppose $C \in \mathcal{C}$ satisfies $C \cap A \neq \varnothing$. 
      We split $C$ into a union of arcs whose endpoints lie in $C \cap A$, and 
      label these arcs as $C_1, \dots, C_N$ so that the terminal point of $C_i$ coincides with 
      the initial point of $C_{i+1}$, where $C_{N+1} = C_1$. 
      Since $q_1 \in C$, we may assume that the initial point of $C_1$ and 
      the terminal point of $C_N$ are both $q_1$. 
      Let $\tilde{C}_i \coloneqq \bigcup_{j=1}^i C_j$ for $i = 1, \dots, N$, and 
      let $\mathcal{C}_1$ be the collection of all arcs $\tilde{C}_i$ obtained by 
      this procedure for each $C$ with $C \cap A \neq \varnothing$. 
     
      \item[(b)] 
      Suppose $E_k$ is not contained in any $C \in \mathcal{C}$ and 
      one of its endpoints $e$ belongs to $K_i$. We choose the point $a_{ij}$ 
      that lies in the connected component of $\partial K_i$ containing $e$. 
      Let $e'$ be the other endpoint of $E_k$. We construct an arc connecting 
      $q_1 \in K_1$ to $e'$ as follows: first, we connect $q_1$ to $q_i \in K_i$ 
      as in the proof of \cref{fact:homology.basis.of.addmissible}; then, 
      using $a_{ij}$ and the arc $A_{ij} \subset K_i$, we connect $q_i$ to $e$ in the same manner; 
      finally, we connect $e$ to $e'$ along $E_k$. 
      We split the resulting arc into a union of arcs $\tilde{C}_i$ in the same way as in (a), 
      noting that the initial point of $C_1$ is $q_1$ and the terminal point of $C_N$ is $e'$. 
      Let $\mathcal{C}_2$ be the collection of all arcs obtained in this way.
    
      \item[(c)] 
      Suppose $a\in A'\coloneqq A\setminus(\bigcup(\mathcal{C}\cup\mathcal{C}_1\cup\mathcal{C}_2))$. 
      Then $a$ lies in $K_i$. We choose an arc $\Lambda_a\subset K_i$ connecting $q_i\in K_i$ to $a$
      so that 
      \begin{gather}
        \Lambda_a \cap 
        \left(
        \left(\bigcup(\mathcal{C}\cup\mathcal{C}_1\cup\mathcal{C}_2)\right) \setminus\{q_i\}
        \right)
        =\varnothing,\\
        \Lambda_a\cap(\Lambda_{a'}\setminus\{q_i\})=\varnothing \ 
         \text{for all }a'\in A'\setminus\{a\}.
      \end{gather}
      Let $\tilde{\Lambda}_a$ be the arc obtained by connecting the arc from $q_1$ to $q_i$ 
      with $\Lambda_a$. We then define $\mathcal{C}_3 \coloneqq \{\tilde{\Lambda}_a \mid a \in A'\}$.
    \end{itemize}
    We define a collection $\tilde{\mathcal{C}}$ of arcs and closed curves in $S$ as
    $\tilde{\mathcal{C}}\coloneqq\mathcal{C}\cup\mathcal{C}_1\cup\mathcal{C}_2\cup\mathcal{C}_3$.
    By the construction, $\bigcup\tilde{\mathcal{C}}$ is connected and Runge in $M$ and 
    every $C\in\tilde{\mathcal{C}}$ contains a nontrivial arc disjoint from 
    all other curves in $\tilde{\mathcal{C}}$. 

    We define the Riemannian metric on $M$ by 
    $g \coloneqq |\theta|^2 = \theta \, \overline{\theta}$. For each $p, q \in S$, 
    let $\Omega_{p, q}$ be the set of all piecewise $C^1$ curves in $S$ from $p$ to $q$ and
    let $d_S(p, q) \coloneqq \inf_{\gamma \in \Omega_{p, q}} L_g(\gamma)$,
    where $L_g(\gamma) = \int_a^b \|\gamma'\|_g  dt = \int_{\gamma} |\theta|$.
    Then, $d_S$ defines a metric on $S$. Indeed, it is clear that $d_S \ge 0$, 
    $d_S(p, q) = d_S(q, p)$, and $d_S(p, q) \le d_S(p, r) + d_S(r, q)$ hold. 
    Let $d_g$ be the Riemannian distance function on $M$ induced by $g$. 
    Since $d_g \le d_S$ on $S$, it follows that $d_S(p, q) > 0$ for $p \ne q$.
    Let us show that the topology on $S$ induced by $d_S$ coincides with the relative 
    topology from $M$. It suffices to show that for each point $p \in S$ and any $\epsilon > 0$,
    there exists $\delta > 0$ such that $B_g(p, \delta) \cap S \subset B_S(p, \epsilon)$, 
    where $B_g(p, \delta) \coloneqq \{q \in M : d_g(p, q) < \delta\}$ and 
    $B_S(p, \epsilon) = \{q \in S : d_S(p, q) < \epsilon\}$.
    Since $S$ is an admissible set, if we choose $\delta > 0$ sufficiently small, 
    then for every $q \in B_g(p, \delta) \cap S$, we can find a curve $\gamma$ in $S$ 
    from $p$ to $q$ such that $L_g(\gamma) < \epsilon$. 
    Therefore, we have $q \in B_S(p, \epsilon)$.
    Now that we have shown that $(S, d_S)$ is a compact metric space, 
    $\max_{p \in S} d_S(p, q_1)$ is positive. Let $L \coloneqq \max_{p \in S} d_S(p, q_1) + 1$.

    Let $\Prd$ be a period map associated to $\tilde{\mathcal{C}}$.  
    \cref{prop:per.prerv.full.approx.} implies that there exists a full map 
    $\tilde{\phi} = (\tilde{\phi}^0, \tilde{\phi}^1, \tilde{\phi}^2) \in \mathcal{O}(M,\B)$ 
    which satisfies 
    $\|\phi - \phi \|_S < \epsilon/L$, $\Prd(\tilde{\phi}) = \Prd(\phi)$, 
    $\tilde{\phi} = \phi$ on $A$, and 
    $\tilde{\phi}$ agrees with $\phi$ to order $\max\{s, \ 3\}$ at every point of $A\cap\Int(S)$. 
    We define $\tilde{f} : M \to \L$ by
    \begin{equation}
       \tilde{f}(p) \coloneqq f(q_1) + \Re\int_{q_1}^p \tilde{\phi} \, \theta.
    \end{equation}
    Since $\Prd(\tilde{\phi}) = \Prd(\phi)$, $\tilde{f}$ is well-defined.
    Furthermore, \cref{lem:full.then.not.identically.zero} implies 
    $-|\tilde{\phi}^0|^2 + |\tilde{\phi}^1|^2 + |\tilde{\phi}^2|^2$ does not vanish identically.
    Thus $\tilde{f}$ is a well-defined full maxface on $M$. 
    Fix an arbitrary point $p \in S$. Since $d_S$ is the infimum of the lengths of curves in $S$, 
    there exists a curve $\gamma_p \in \Omega_{q_1, p}$ such that 
    $L_g(\gamma_p) < d_S(p_0, p) + 1 = L$. Hence, we have
    \begin{equation}
      \|\tilde{f}(p) - f(p)\| 
      = \left\|\Re\int_{\gamma_p} (\tilde{\phi} - \phi )\theta \right\| 
      \le \|\tilde{\phi} - \phi \|_S \int_{\gamma_p}|\theta| 
      < L \|\tilde{\phi} - \phi \|_S.
    \end{equation}
    That is, $\|\tilde{f} - f\|_S \le L \|\tilde{\phi} - \phi \|_S < \epsilon$ holds.
    Moreover, we have the following for all $C \in \mathcal{C}$:
    \begin{equation}
      \Flux_{\tilde{f}}([C])
      = \Im\int_C \tilde{\phi} \, \theta
      = \Im\int_C \phi \, \theta
      = \Flux_f^{\mathcal{C}}([C]).
    \end{equation}
    Thus, $\Flux_{\tilde{f}} = \Flux_{f}^{\mathcal{C}}$ holds. Let $p\in A$. 
    Since $\bigcup\tilde{\mathcal{C}}$ is connected, there exist
    $C_1, \dots, C_N\in\tilde{\mathcal{C}}$ such that $\bigcup_{i=1}^NC_i$ is 
    a curve from $q_1$ to $p$. Hence,
    \begin{equation}
      \tilde{f}(p)
        = f(q_1) + \sum_{i=1}^N \Re\int_{C_i} \tilde{\phi} \, \theta 
        = f(q_1) + \sum_{i=1}^N \Re\int_{C_i} \phi \, \theta 
        = f(p)
    \end{equation}
    holds and we get $\tilde{f} = f$ on $A$.
    For each point $p \in A \cap \Int(S)$, taking a chart $(U, z = u + \I \, v)$ around $p$, 
    we have
    \begin{equation}
      \frac{\partial}{\partial u}(\tilde{f} - f)
      = \Re\left((\tilde{\phi} - \phi) \cdot 
         \theta\left(\frac{\partial}{\partial z}\right)\right),\quad
      \frac{\partial}{\partial v}(\tilde{f} - f)
      = - \Im\left((\tilde{\phi} - \phi) \cdot
           \theta\left(\frac{\partial}{\partial z}\right)\right).
    \end{equation}
    Since $\tilde{\phi}-\phi$ vanishes on $A \cap \Int(S)$ to order $s$, 
    so does $\tilde{f} - f$.

    Let $f|_{\Int(S)}$ be $\mathcal{A}$-equivalent to a cuspidal edge at $p\in\Sigma$.
    Around $p$, the Weierstrass data $(g, \omega)$ of $f|_{\Int(S)}$ can be expressed as 
    \begin{equation}\label{eq.EW.kakikae}
      g=-\frac{\phi^0}{\phi^1-\I\phi^2},\qquad \omega=\frac{1}{2}(\phi^1-\I\phi^2)\theta.
    \end{equation}
    Since $\tilde{\phi}$ agrees with $\phi$ at $p$ to order at least $3$, 
    we have $\tilde{\phi}^{(k)}(p) = \phi^{(k)}(p)$ ($k = 0, 1$).
    Rewriting the Weierstrass data $(\tilde{g}, \tilde{\omega})$ of $\tilde{f}$ similarly to 
    \eqref{eq.EW.kakikae} yields 
    $\tilde{g}(p) = g(p)$,  
    $d\tilde{g}_p = dg_p$, and 
    $\omega_p = \tilde{\omega}_p$.
    Therefore, we get 
    \begin{equation}
      \Re\left(\frac{d\tilde{g}_p}{\tilde{g}(p)^2\tilde\omega_p}\right)
       = \Re\left(\frac{dg_p}{g(p)^2\omega_p}\right)\ne0,
      \quad
      \Im\left(\frac{d\tilde{g}_p}{\tilde{g}(p)^2\tilde\omega_p}\right)
       = \Im\left(\frac{dg_p}{g(p)^2\omega_p}\right)\ne0.
    \end{equation}
    \cref{fact:hanntei} (i) implies that $\tilde f$ is $\mathcal A$-equivalent to a cuspidal edge.
    When $f$ is $\mathcal{A}$-equivalent to a swallowtail or a cuspidal cross cap at $p$,
    we can apply the same argument as in the case of the cuspidal edge, 
    using $\tilde{\phi}^{(k)}(p) = \phi^{(k)}(p)$ for $k = 0, 1, 2$ 
    together with \cref{fact:hanntei} (ii) and (iii).
    Furthermore, when $f$ is $\mathcal{A}$-equivalent to a cuspidal butterfly or 
    a cuspidal $S_1^-$ singularity at $p$, we obtain the conclusion using 
    $\tilde{\phi}^{(k)}(p) = \phi^{(k)}(p)$ for $k = 0, 1, 2, 3$, 
    together with \cref{fact:hanntei} (iv) and (v).
  \end{proof}
  
  The following lemma is required for the proof of \cref{thm:main}.
  
  \begin{lemma} \label{prop:existence.of.Morse}
   Let $M$ be an open Riemann surface, 
   let $K \subset M$ be a compact Runge set, 
   let $U \subset M$ be an open neighborhood of $K$, and
   let $\Lambda \subset M$ be a closed discrete set.
   Then, there exists a strongly subharmonic Morse exhaustion function $\rho : M \to \R$ 
   satisfying the following:
   \begin{enumerate}
     \item 
     $K \subset \{\rho < 0\} \subset \{\rho \le 0\} \subset U$,

     \item 
     $\Lambda \cap \mathrm{Crit}(\rho) = \varnothing$,
     
     \item 
     $\rho|_{\mathrm{Crit}(\rho) \cup \Lambda}$ is injective, and

     \item 
     $0 \notin \rho(\mathrm{Crit}(\rho) \cup \Lambda)$,
   \end{enumerate}
    where $\mathrm{Crit}(\rho)$ is the set of its critical points of $\rho$.
 \end{lemma}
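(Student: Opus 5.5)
Start from \cref{fact:str.subham.Morse.exh.fucn}, which gives a strongly subharmonic Morse exhaustion function $\rho_0$ with $K \subset \{\rho_0 < 0\} \subset \{\rho_0 \le 0\} \subset U$. Then make small perturbations that preserve strong subharmonicity, the Morse property and exhaustiveness, one condition at a time. The tool is local: a perturbation $\rho_0 + \sum_j \chi_j \ell_j$, where $\chi_j$ are cutoff functions with pairwise disjoint, locally finite supports around chosen points and $\ell_j$ are small real affine (or constant) functions in local coordinates. Since strong subharmonicity and nondegeneracy of critical points are open conditions in the $C^2$ topology on compact sets, a perturbation that is small enough in the fine (Whitney) $C^2$ sense keeps $\rho$ strongly subharmonic and Morse. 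It also keeps $\rho$ exhaustive, because $|\rho - \rho_0| < 1$, and it keeps condition (i) if we make it $C^0$-small on the compact set $\{\rho_0 \le 0\}$. For (i), concretely, pick $c>0$ small with $\{\rho_0 \le c\} \subset U$ and $K \subset \{\rho_0 < -c\}$, and demand $|\rho - \rho_0| < c$ there.

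\emph{Step 1, condition (ii).} $\mathrm{Crit}(\rho_0)$ is closed and discrete (the critical points are nondegenerate), and so is $\Lambda$. Around each point $a \in \Lambda \cap \mathrm{Crit}(\rho_0)$, choose a coordinate disc $D_a$; the discs are pairwise disjoint and locally finite. On $D_a$ add $\chi_a \cdot \Re(\bar b_a z)$ with $b_a$ tiny. This moves the nondegenerate critical point to a nearby point $z \approx -(\mathrm{Hess})^{-1} b_a$, which is different from $a$, and creates no new critical points in $D_a$ as long as the perturbation is $C^1$-small on the annulus where $\chi_a$ is nonconstant; there $d\rho_0 \neq 0$ after shrinking. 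Off the discs, $\rho$ is unchanged.

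\emph{Step 2, conditions (iii) and (iv).} The set $P := \mathrm{Crit}(\rho) \cup \Lambda$ is closed and discrete, hence countable: $P = \{p_1, p_2, \dots\}$. Take disjoint, locally finite coordinate discs $D_j \ni p_j$ that avoid the other points of $P$, and add $\chi_j \cdot c_j$ with $\chi_j \equiv 1$ near $p_j$ and $c_j \in \R$ small. Inside the region where $\chi_j \equiv 1$, adding a constant changes neither the critical set nor the Hessian. On the annulus, $|d\rho| \ge m_j > 0$ (no critical points there), so $|c_j| \, \|d\chi_j\|_{C^1} < m_j$ and a similar $C^2$-smallness keep $\mathrm{Crit}$ unchanged and $\rho$ strongly subharmonic. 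Choose the $c_j$ inductively. At step $j$, the admissible values of $c_j$ form an interval minus countably many points: we exclude $0 - \rho(p_j)$ and each $\rho(p_i)+c_i - \rho(p_j)$ for $i<j$. For $i>j$, the later choices do the avoiding. So a choice always exists.

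The main obstacle is the bookkeeping of smallness over the infinitely many points. The bounds $|c_j|$ and $|b_a|$ must be chosen depending on local $C^2$ data, namely $m_j$, the lower bound of $\partial^2\rho/\partial z\partial\bar z$ on $D_j$, and the Hessian nondegeneracy. On $\{\rho_0 \le c\}$ they must in addition be bounded by $c$. Because the discs are disjoint and locally finite, each of these requirements is a separate local condition, so they can be satisfied simultaneously. The resulting $\rho$ then satisfies (i)--(iv).
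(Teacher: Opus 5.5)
Your proposal is essentially the paper's proof. First comes a perturbation $\rho_1=\rho_0-\sum_k\chi_k\psi_k$ by small linear tilts near the points of $\mathrm{Crit}(\rho_0)\cap\Lambda$. Then comes $\rho=\rho_1-\sum_i\epsilon_i\eta_i$, with the constants chosen inductively to avoid finitely many bad values. All smallness conditions are imposed disc by disc, and your $C^2$-openness argument replaces the paper's explicit Laplacian and gradient estimates and its condition $a_k\in F_k((\mathrm{Hess}\,\rho_0)^{-1}(\GL(2,\R)))$.

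There is one slip to fix. Bounding $|\rho-\rho_0|<c$ only on $\{\rho_0\le c\}$, with the cruder bound $1$ elsewhere, does not give $\{\rho\le 0\}\subset U$. A point $p\notin U$ with $c<\rho_0(p)<1$ could be pushed down to $\rho(p)\le 0$. You need the bound $c$ on all of $M$. This costs nothing, since the supports are disjoint and each local term can be taken with sup norm $<c$. It is what the paper does via $\inf_{M\setminus U}\rho_0>0$ and $\sum_k\|a_k\|\le\frac12\min\{\inf_{M\setminus U}\rho_0,\,-\max_K\rho_0\}$.
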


 \begin{proof}
   \emph{Step 1: There exists a strongly subharmonic Morse exhaustion function
   $\rho_1 : M \to \R$ satisfying 
   $K \subset \{\rho_1 < 0\} \subset \{\rho_1 \le 0\} \subset U$ and 
   $\mathrm{Crit}(\rho_1) \cap \Lambda = \varnothing$.}
   By \cref{fact:str.subham.Morse.exh.fucn}, 
   there exists a strongly subharmonic Morse exhaustion function
   $\rho_0 : M \to \R$ such that $K \subset \{\rho_0 < 0\} \subset \{\rho_0 \le 0\} \subset U$.
   If $\mathrm{Crit}(\rho_0) \cap \Lambda = \varnothing$, then we set $\rho_1 \coloneqq \rho_0$.
   Assume $I \coloneqq \mathrm{Crit}(\rho_0) \cap \Lambda \ne \varnothing$.
   Since $I$ is a closed discrete set, it is at most countable. 
   We write $I = \{p_k : k = 1, 2, \dots \}$.
   For each $p_k \in I$, there is a chart $(W_k, w_k = u_k + \I \, v_k)$ around $p_k$ such that
   $W_k \cap \mathrm{Crit}(\rho_0) = \{p_k\}$,
   $W_k \cap W_l = \varnothing$ $(k \ne l)$, $w_k(p_k) = 0$, and $w_k(W_k) = D_2(0)$.
   We set $\overline{D}_k \coloneqq w_k^{-1}(\overline{D_1(0)})$. 
   Let $F_k : \overline{D}_k \to \R^2$ be a map defined by $F_k \coloneqq \grad \rho_0$, where
   $\grad \rho_0 \coloneqq (\partial \rho_0/\partial u_k, \partial \rho_0/ \partial v_k)$.
   Hereafter, whenever it is clear from the context that the support of the function $\psi$ is 
   contained in a chart $(V, z = u + \I \, v)$, 
   we write $\grad \psi \coloneqq (\partial \psi/ \partial u, \partial \psi / \partial v)$.
   Since $p_k$ is a nondegenerate critical point of $\rho_0$, it holds that $F_k(p_k) = 0$ and 
   the Jacobian matrix of $F_k$ at $p_k$ is invertible.
   Hence, there are a constant $r_k \in (0, 1)$ and an open neighborhood $\hat{W}_k$ of 
   the origin $F_k(p_k)$ 
   such that the restriction $F_k : w_k^{-1}(D_{r_k}(0)) \to \hat{W}_k$ is a diffeomorphism.

   Let $\hat{D}_k \coloneqq w_k^{-1}(D_{r_k}(0))$ and 
   $A_k \coloneqq \overline{D}_k \setminus \hat{D}_k$.
   Let us show that $\min_{A_k}\|F_k\| > 0$ and $\inf_{M \setminus U} \rho_0 > 0$.
   The function $\rho_0$ has no critical point on $A_k$. Thus, $F_k \ne 0$ on $A_k$. 
   This implies $\min_{A_k}\|F_k\| > 0$. To show $\inf_{M \setminus U} \rho_0 > 0$, we assume
   $\inf_{M \setminus U} \rho_0 = 0$. Then there is a sequence $\{p_n\}_n \subset M \setminus U$
   such that $\rho_0(p_n) \rightarrow 0$. We may assume $\{p_n\}_n \subset \{\rho_0 \le 1\}$.
   Since $\{\rho_0 \le 1\}$ is compact, $\{\rho_0 \le 1\} \cap (M \setminus U)$ is also compact.
   Therefore, $\{p_n\}_n \subset \{\rho_0 \le 1\} \cap (M \setminus U)$ has a subsequence which 
   converges to some point $p_* \in M \setminus U$. The continuity of $\rho_0$ gives 
   $\rho_0(p_*) = 0$, i.e., $p_* \in \{\rho_0 \le 0\} \subset U$. 
   This is a contradiction. Now, we have $\inf_{M \setminus U} \rho_0 > 0$.
   
   We take a smooth function $\chi_k : M \to \R$ 
   such that $\chi_k \equiv 1$ on $\hat{D}_k$, $0 \le \chi_k \le 1$ on $A_k$, and
   $\chi_k \equiv 0$ on $M \setminus \overline{D}_k$. 
   Fix $a_k \coloneqq (a_{k,1}, a_{k,2}) \in D_{r_k}(0) \setminus \{0\}$ satisfying
   \begin{equation} \label{eq:how.to.take.ak}
     \begin{gathered}
       \|a_k\| < \min\bigg\{
       \dfrac{\min_{\overline{D}_k} \Delta \rho_0}
           {\max_{\overline{D}_k}(|\Delta \chi_k| + 2 \, \|\grad \chi_k \|) + 1}, 
       \dfrac{\min_{A_k}\|F_k\|}{\max_{A_k}(\chi_k + |w_k| \, \|\grad \chi_k\|)}, \\
       \hspace{190pt}
        2^{-k-1} \, \min\{\inf_{M \setminus U}\rho_0, - \max_K \rho_0\},
       \bigg\}, \\
         a_k \in F_k((\mathrm{Hess} \, \rho_0)^{-1}(\mathrm{GL}(2, \R))),
     \end{gathered}
   \end{equation}
   where $\Delta = \partial^2 / \partial u_k^2 + \partial^2 / \partial v_k^2$ and
   $\mathrm{Hess} \, \rho_0 : W_k \to \M(2, \R)$ is the Hessian with respect to 
   the coordinates $u_k + \I v_k$. 
   We note that $\max_K \rho_0$ is negative and $\Delta \rho_0 > 0$ holds on $\overline{D}_k$ 
   since $\rho_0$ is strongly subharmonic. Let $\psi_k : \overline{D}_k \to \R$ and 
   $\rho_1 : M \to \R$ be functions defined by 
   \begin{align}
     \psi_k \coloneqq a_{k,1} \, u_k + a_{k,2} \, v_k,  \qquad 
     \rho_1 \coloneqq \rho_0 - \sum_{p_k \in I} \chi_k \, \psi_k.
   \end{align}
   
   First, we prove that $\rho_1$ is strongly subharmonic. 
   It suffices to show that $\Delta \rho_1 > 0$ on each $\overline{D}_k$, 
   which follows from \eqref{eq:how.to.take.ak} and the following inequality on $\overline{D}_k$:
   \begin{align}
     \Delta \rho_1 
     &\ge \Delta \rho_0 - |\Delta(\chi_k \, \psi_k)|
     \ge \min_{\overline{D}_k} \Delta \rho_0 
           - (|\Delta \chi_k| \, \|a_k\| + 2 \, \|\grad \chi_k\| \, \|a_k\|) \\
     &\ge \min_{\overline{D}_k} \Delta \rho_0
           - \|a_k\| \, 
           \left\{
           \max_{\overline{D}_k}(|\Delta \chi_k| + 2 \, \|\grad \chi_k\|) + 1
           \right\}
     > 0.
   \end{align}

   Next, let us show that $\rho_1$ is a Morse function. 
   Since $\rho_1 = \rho_0$ holds on $M \setminus (\bigcup_k \overline{D}_k)$, 
   $\rho_1$ has only nondegenerate critical points on 
   $M \setminus (\bigcup_k \overline{D}_k)$. 
   For a fixed $p_k \in I$ and for all $p \in A_k$, we obtain
   \begin{align}
     \|\chi_k(p) \, a_k + \psi_k(p) \, \grad \chi_k (p)\|
     &\le \chi_k(p) \, \|a_k\| + \|a_k\| \, \|(u_k(p), v_k(p))\| \, \|\grad \chi_k (p)\| \\
     &\le \|a_k\| \, \max_{A_k}(\chi_k + |w_k| \, \|\grad \chi_k \|).
   \end{align}
   Therefore, by \eqref{eq:how.to.take.ak}, the following holds on $A_k$:
   \begin{align}
     \|\grad \rho_1 \|
     &\ge \|\grad \rho_0\| - \|\chi_k \, a_k + \psi_k \, \grad \chi_k \| \\
     &\ge \min_{A_k} \|F_k\| - \|a_k\| \, \max_{A_k} (\chi_k + |w_k| \, \|\grad \chi_k\|) > 0.
   \end{align}
   This means that $\rho_1$ has no critical points on $A_k$. By the definition of $\psi_k$, 
   $\grad \rho_1 = \grad \rho_0 - a_k$ holds on $\hat{D}_k$. 
   Thus, $\grad \rho_1 = 0$ is equivalent to $F_k = a_k$. 
   Hence, $\rho_1$ has a unique critical point $F_k^{-1}(a_k)$ in $\hat{D}_k$.
   This implies that $\mathrm{Crit}(\rho_1) \cap \Lambda = \varnothing$.
   Furthermore, it is clear that 
   $\mathrm{Hess} \, \rho_1 = \mathrm{Hess} \, \rho_0$ holds on $\hat{D}_k$, and 
   by \eqref{eq:how.to.take.ak}, we obtain:
   \begin{equation}
     \mathrm{Hess} \, \rho_1 (F_k^{-1}(a_k)) 
     = \mathrm{Hess} \, \rho_0(F_k^{-1}(a_k)) 
     \in \mathrm{GL}(2, \mathbb{R}).
   \end{equation}
   This shows that $\rho_1$ is a Morse function.

   Let us show that $\rho_1$ is an exhaustion function. 
   There exists a constant $L > 0$ such that $\sum_k |\chi_k \, \psi_k| < L$ holds on $M$.
   Thus, for each $c \in \R$ and $p \in \{\rho_1 \le c\}$, we obtain:
   \begin{equation}
     \rho_0(p) 
     = \rho_1(p) + \sum_k \chi_k \, \psi_k 
     \le c + L.
   \end{equation}
   Hence, $\{\rho_1 \le c\} \subset \{\rho_0 \le c + L \}$ holds. 
   Since $\{\rho_1 \le c\}$ is a closed subset of the compact set $\{\rho_0 \le c + L \}$, 
   it is compact.
   
   Finally, we prove that $K \subset \{\rho_1 < 0\} \subset \{\rho_1 \le 0\} \subset U$.
   Noting $\|\psi_k\| \le |w_k| \, \|a_k\| \le \|a_k\|$ and \eqref{eq:how.to.take.ak}, 
   we obtain the following for all $p \in K$:
   \begin{align}
     \rho_1(p) 
     &\le \max_K \rho_0 + \left|\sum_k \chi_k \, \psi_k \right|  
     \le \max_K \rho_0 + \sum_k \|a_k\| \\ 
     &\le \max_K \rho_0 + \sum_k \frac{-\max_K \rho_0}{2^{k+1}}
     < \frac{1}{2} \max_K \rho_0 < 0
   \end{align}
   Thus, $K \subset \{\rho_1 < 0\}$. Let $p \in M \setminus U$. Then, we have:
   \begin{equation}
     \rho_1(p)
     \ge \inf_{M \setminus U} \rho_0 - \sum_k \|a_k\|
     \ge \inf_{M \setminus U} \rho_0 - \sum_k \frac{\inf_{M \setminus U} \rho_0}{2^{k + 1}}
     \ge \frac{1}{2} \inf_{M \setminus U} \rho_0 > 0.
   \end{equation}
   Hence, it follows that $M \setminus U \subset \{\rho_1 > 0\}$, i.e., 
   $\{\rho_1 \le 0\} \subset U$.

   \emph{Step 2: Completion of the proof.}
   We set $X = \{q_i : i = 1, 2, \dots\} \coloneqq \mathrm{Crit}(\rho_1) \cup \Lambda$.
   For each $q_i \in X$, there exists a chart $(U_i, z_i)$ around $q_i$ such that 
   $U_i \cap U_j = \varnothing$ $(i \ne j)$, $z_i(q_i) = 0$, and $z_i(U_i) = D_2(0)$.
   Let $\overline{V}_i \coloneqq z_i^{-1}(\overline{D_1(0)})$ and let $\eta_i : M \to \R$
   be a smooth function such that $\eta_i \equiv 1$ on $z_i^{-1}(D_{1/2}(0))$, 
   $0 \le \eta_i \le 1$ on $\overline{V}_i \setminus z_i^{-1}(D_{1/2}(0))$, and 
   $\eta_i \equiv 0$ on $M \setminus \overline{V}_i$. 
   If we set $B_i \coloneqq \overline{V}_i \setminus z_i^{-1}(D_{1/2}(0))$, then 
   the same argument as in Step 1 implies $\min_{B_i} \|\grad \rho_1\| > 0$ and
   $\inf_{M \setminus U} \rho_1 > 0$. Take $\delta_i > 0$ satisfying
   \begin{equation} \label{eq:how.to.take.deltai}
     \begin{gathered}
       \delta_i < \min\bigg\{
       \dfrac{\min_{B_i}\|\grad \rho_1\|}{\max_{B_i}(\|\grad \eta_i\|) + 1}, \ \  
        2^{-i-1} \, \min\{\inf_{M \setminus U}\rho_1, \ - \max_K \rho_1\}, \\ 
        \hspace{240pt}
        \dfrac{\min_{\overline{V}_i} \Delta \rho_1}
           {\max_{\overline{V}_i}(|\Delta \eta_i|) + 1}
       \bigg\},
     \end{gathered}
   \end{equation}
   Let $I_i \coloneqq (0, \delta_i) \setminus \rho_1(X)$ and
   let $\epsilon_1, \epsilon_2, \dots$ be positive numbers such that
   \begin{gather}
     \epsilon_1 \in I_1, \quad 
     \epsilon_2 \in I_2 \setminus \{-\rho_1(q_1) + \epsilon_1 + \rho_1(q_2)\}, \\
     \epsilon_3 \in I_3 \setminus \{-\rho_1(q_1) + \epsilon_1 + \rho_1(q_3), \, 
                                    -\rho_1(q_2) + \epsilon_2 + \rho_1(q_3)\}, \\
     \epsilon_4 \in I_4 \setminus \{-\rho_1(q_1) + \epsilon_1 + \rho_1(q_4), \,
                                    -\rho_1(q_2) + \epsilon_2 + \rho_1(q_4), \,
                                    -\rho_1(q_3) + \epsilon_3 + \rho_1(q_4)\} \\
     \vdots
   \end{gather}
   We define $\rho : M \to \R$ by
   \begin{equation}
     \rho = \rho_1 - \sum_{q_i \in X} \epsilon_i \, \eta_i.
   \end{equation}
   Noting \eqref{eq:how.to.take.deltai}, we can show that 
   $\rho$ is a strongly subharmonic Morse exhaustion function
   satisfying $K \subset \{\rho < 0\} \subset \{\rho \le 0\} \subset U$
   by the same argument as in Step 1.

   We now show that $\rho|_X$ is injective. 
   Note that $\mathrm{Crit}(\rho) = \mathrm{Crit}(\rho_1)$. If $i < j$, then 
   \begin{equation}
     \epsilon_j \in I_j \setminus \{-\rho_1(q_1) + \epsilon_1 + \rho_1(q_j), ...,
                                    -\rho_1(q_i) + \epsilon_i + \rho_1(q_j), ...,
                                    -\rho_1(q_{j-1}) + \epsilon_{j-1} + \rho_1(q_j)\}.
   \end{equation}
   Thus, $\rho(q_i) = \rho_1(q_i) - \epsilon_i \ne \rho_1(q_j) - \epsilon_j = \rho(q_j)$.
   Furthermore, each $\epsilon_i \in I_i$ satisfies $\epsilon_i \ne \rho_1(q_i)$, i.e.,
   $\rho(q_i) = \rho_1(q_i) - \epsilon_i \ne 0$. 
   Therefore, $\rho(q_i) \ne 0$ for all $q_i \in X$. 
 \end{proof}
 \begin{fact}[{\cite[Lemma 3.5.4]{AFL21}}] \label{fact:connecting.path.in.B}
   Assume that $A$ is an irreducible nondegenerate algebraic subvariety of $\C^n$ 
   $($i.e., $A$ is not contained in any affine hyperplane of $\C^n)$.
   Given a continuous map $f_0 : [0, 1] \to A_{\mathrm{reg}}$ 
   into the regular locus $A_{\mathrm{reg}}$ of $A$, 
   a continuous function $g : [0,1] \to \C \setminus \{0\}$, 
   a vector $v \in \C^n$, and 
   a connected domain $\Omega \subset \C^n$ containing $0$ and $v$,
   there exists a homotopy $f_{\tau} : [0,1] \to A_{\mathrm{reg}}$ $(\tau \in [0,1])$ 
   fixing the endpoints such that 
   the map $f = f_1$ is smooth, 
   $f([0, \epsilon])$ is not contained in any affine complex line in $\C^n$ 
   for sufficiently small $\epsilon > 0$, and
   \begin{equation} \label{eq:integral}
     \int_0^1 f(s) \, g(s) \, ds = v \quad \text{and} \quad
     \int_0^t f(s) \, g(s) \, ds \in \Omega \quad \text{for all $t \in [0, 1]$.}
   \end{equation}
   In particular, any pair of points in $A_{\mathrm{reg}}$ can be connected by 
   a smooth path $f : [0,1] \to A_{\mathrm{reg}}$ satisfying condition \eqref{eq:integral}.
 \end{fact}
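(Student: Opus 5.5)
The plan is to build $f$ from $f_0$ by inserting short ``excursions'' into $A_{\mathrm{reg}}$. Each excursion goes out and comes back along the same route, so it does not change the homotopy class rel endpoints. Its time-weights are chosen so that the running integral $\int_0^t f g\,ds$ follows a prescribed path from $0$ to $v$ inside $\Omega$.

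\emph{Step 1 (convexity).} First I would show that the convex hull of $A_{\mathrm{reg}}$ is all of $\C^n$. I expect this to be the main obstacle. Suppose not. Then there is a nonzero $\C$-linear functional $\lambda$ and a constant $c$ with $\Re\lambda \le c$ on $A_{\mathrm{reg}}$, hence on $A$ by density. Through a generic point of $A$ there are irreducible affine algebraic curves in $A$. Their normalizations are affine curves, which are parabolic, so the bounded-above subharmonic function $\Re\lambda$ is constant on each of them. Since any two points of the irreducible variety $A$ can be joined by a chain of such curves, $\lambda$ is constant on $A$. Then $A$ lies in an affine hyperplane, contradicting nondegeneracy.

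Because the weight $g$ is continuous and nonvanishing, this extends locally. For every $s_0\in(0,1)$, the values $\int_J h\,g\,ds$ over short intervals $J\ni s_0$ and continuous $h\colon J\to A_{\mathrm{reg}}$ with prescribed endpoints $f_0(\partial J)$ fill, after rescaling by $|J|^{-1}$, a set containing an open neighborhood of $g(s_0)\,f_0(s_0)$. The neighborhood can be taken arbitrarily large. The mechanism is to choose points $a_0,\dots,a_n\in A_{\mathrm{reg}}$ whose convex hull contains a large ball, and to let $h$ travel from $f_0(s_0)$ to each $a_j$ and back. Connectedness of $A_{\mathrm{reg}}$ (irreducibility) allows this, and $h$ lingers near $a_j$ for a time proportional to a barycentric weight $t_j$. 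The travel time is negligible, so the rescaled integral is close to $g(s_0)\sum t_j a_j$, and continuous dependence on the weights together with a degree argument gives openness.

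\emph{Step 2 (following a path in $\Omega$).} Next I would fix a path $\sigma\colon[0,1]\to\Omega$ from $0$ to $v$ and a margin $r>0$ with $\sigma([0,1])+B_r(0)\subset\Omega$. I would subdivide $[0,1]$ into many intervals $J_1,\dots,J_m$, so fine that the variation of $f_0$ and $g$ on each is small. On each $J_k$ I would replace $f_0$ by an excursion path as in Step 1. Its integral over $J_k$ is chosen to equal $\sigma(k/m)-\sigma((k-1)/m)$ exactly, which is possible since these increments are $O(1/m)$ while Step 1 yields an open set of achievable values of size comparable to $|J_k|$. Consecutive pieces match at the subdivision points, and each piece is homotopic rel endpoints to $f_0|_{J_k}$. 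Hence the concatenation is homotopic to $f_0$ rel endpoints, with a homotopy inside $A_{\mathrm{reg}}$. The running integral stays within $O(1/m)<r$ of $\sigma$, so it stays in $\Omega$, and its final value is $v$.

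\emph{Step 3 (smoothing and the nondegeneracy at $0$).} Finally I would smooth $f$ in the manifold $A_{\mathrm{reg}}$, rel endpoints. This breaks the exact equality $\int_0^1 fg\,ds=v$ only by a small error. I would restore it with a finite-dimensional correction, namely a local spray of $n$ excursion parameters in one interval, for which Step 1 gives a submersion onto a neighborhood; the implicit function theorem then applies. On the initial segment $[0,\epsilon]$ I would arrange, before the correction, that $f$ is not contained in any affine complex line. This is an open and dense condition: since $A_{\mathrm{reg}}$ has dimension at least $1$ and is nondegenerate, a small generic perturbation near $s=0$ fixing $f(0)$ achieves it, and it survives the small corrective spray. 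The final claim of the Fact follows by taking $f_0$ to be any path in the connected manifold $A_{\mathrm{reg}}$ joining the two given points.
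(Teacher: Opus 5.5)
The paper does not prove this statement. It is quoted from \cite[Lemma 3.5.4]{AFL21} and used only as a black box (its ``in particular'' clause, in Case 2 and Subcases 3b, 3c of the proof of \cref{thm:main}), so there is no in-text argument to compare your route with. Judged on its own, your outline is correct in substance:
\begin{enumerate}
\item The half-space argument does show that the convex hull of $A_{\mathrm{reg}}$ is $\C^n$. A quicker finish: $\lambda(A)$ is a connected constructible subset of $\C$, hence a point or a cofinite set, and a cofinite set cannot lie in a half-plane.
\item Back-and-forth excursions leave the homotopy class rel endpoints unchanged.
\item Localizing on short intervals where $g$ is almost constant is exactly what makes the argument work for non-conical $A$, where $g(s)$ cannot be absorbed into $A$ by rescaling.
\end{enumerate}

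Several details need fixing, none of them serious.
\begin{enumerate}
\item Convex hulls here are real. A ball in $\C^n\cong\R^{2n}$ never lies in the convex hull of $n+1$ points, so you need finitely many points $a_j$, at least $2n+1$ (Carath\'eodory applied to the vertices of a large simplex supplies them). Likewise, a correcting family built from lingering times needs $2n$ real parameters, not $n$.
\item To make Step 2 uniform in $k$ and $m$, take $\sigma$ Lipschitz (e.g.\ polygonal), so that the increments really are $O(1/m)$. Also route every excursion through one fixed compact set: run back along $f_0$ to $f_0(0)$, then along a fixed path to the cluster of the $a_j$. Then travel contributes $O(\delta)$ to the rescaled integral independently of $k$ and $m$, and the degree argument gives exact values on every $J_k$.
\item The condition on $f([0,\epsilon])$ concerns the germ of $f$ at $s=0$; it is not an open condition. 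It survives your correction simply because the correction is supported away from $0$. The condition also forces $n\ge 2$: for $n=1$ one has $A=\C$, which is itself an affine complex line, so the statement tacitly assumes $n\ge2$. With $n\ge2$:
\begin{enumerate}
\item if $\dim A=1$, any germ that is nonconstant on every $[0,\epsilon]$ works, since a proper algebraic subset of the irreducible curve $A$ is finite and $A$ is not a line;
\item if $\dim A\ge 2$, a generic second-order term at $f(0)$ in a local chart of $A_{\mathrm{reg}}$ does it.
\end{enumerate}
\item Step 3 can be dropped. Smooth $f_0$ first (including its germ at $0$), and build the excursions from smooth pieces joined by smooth time reparametrizations. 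The degree argument of Step 2 then already produces an exact smooth $f$, and no implicit function theorem is needed. As written, your Step 3 also overstates Step 1: a degree argument gives continuity and covering, not a submersion.
\end{enumerate}
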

 \begin{theorem} \label{thm:main}
   Assume that $M$ is an open Riemann surface, 
   $\theta$ is a nonvanishing holomorphic $1$-form on $M$, 
   $S \subset M$ is a connected Runge admissible set,
   $\Lambda \subset M$ and $\Sigma \subset \Int(S) \cup \Lambda$ are closed discrete subsets, 
   $V \subset M$ is an open neighborhood of $\Lambda$,
   $f : S \cup V \to \L$ is a map such that 
   $(f|_S, \phi|_S \, \theta)$ is a generalized maxface and $f|_V$ is a maxface, 
   where $\phi = 2 \, \partial f / \theta$.

   Given a positive number $\epsilon$, 
   a map $k \colon \Lambda \to \Z_{>0}$, and 
   a group homomorphism $\mathfrak{p} \colon H_1(M, \Z) \to \R^3$ with 
   $\mathfrak{p}|_{H_1(S, \Z)} = \Flux_f^{\mathcal{C}}$
   $($where $\mathcal{C}$ is a homology basis of $S$ 
   obtained by \cref{fact:homology.basis.of.addmissible}$)$,
   there exists a full maxface $\tilde{f} \colon M \to \L$ satisfying the following conditions.
   \begin{enumerate}
     \item
     $\|\tilde{f} - f \|_S \le \epsilon$.
     
     \item
     The difference $\tilde{f} - f$ vanishes to order $k(p)$ at every point $p \in \Lambda$.
     
     \item
     $\Flux_{\tilde{f}} = \mathfrak{p}$ on $H_1(M, \Z)$.

     \item 
     If the maxface $f|_{\Int(S) \cup V}$ is $\mathcal{A}$-equivalent to a cuspidal edge 
     $($resp. swallowtail, cuspidal cross cap, cuspidal butterfly, cuspidal $S_1^-$ singularity$)$ 
     at $p \in \Sigma$, then $\tilde{f}$ is also $\mathcal{A}$-equivalent to a cuspidal edge 
     $($resp. swallowtail, cuspidal cross cap, cuspidal butterfly, cuspidal $S_1^-$ singularity$)$
     at $p$.
   \end{enumerate}
 \end{theorem}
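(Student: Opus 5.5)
We will follow the inductive scheme of \cite[Theorem 3.6.1]{AFL21}, with \cref{prop:weak.approx.} as the basic step. First, using \cref{fact:str.subham.Morse.exh.fucn} and \cref{prop:existence.of.Morse}, choose a strongly subharmonic Morse exhaustion function $\rho$ on $M$ with $S \subset \{\rho<0\}$ and $\Lambda \cap \mathrm{Crit}(\rho)=\varnothing$. Choose it also so that $\rho|_{\mathrm{Crit}(\rho)\cup\Lambda}$ is injective and $0\notin\rho(\mathrm{Crit}(\rho)\cup\Lambda)$. Then choose regular values $0<c_1<c_2<\cdots\to\infty$ so that each interval $(c_{j-1},c_j]$ contains exactly one critical value of $\rho$ or exactly one point of $\rho(\Lambda)$, but not both. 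Set $M_0\coloneqq S$ and $M_j\coloneqq\{\rho\le c_j\}$. Each $M_j$ is a compact Runge domain, $\bigcup_j M_j=M$, and $\Lambda\cap M_j$ is finite. Since $\Lambda$ is discrete, we may also shrink $V$ to a union of pairwise disjoint small closed discs $\overline{D}_p$ around the points $p\in\Lambda$. We then construct inductively full maxfaces $f_j$ on neighbourhoods of $M_j$, together with numbers $\epsilon_j>0$ with $\sum_j\epsilon_j<\epsilon$. Each $f_j$ should satisfy $\|f_j-f_{j-1}\|_{M_{j-1}}<\epsilon_j$. It should agree with $f$ to order $k(p)$ at every $p\in\Lambda\cap M_j$, and to order $\max\{k(p),3\}$ at $p\in\Sigma\cap M_j$. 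Its flux should equal $\mathfrak p$ on $H_1(M_j,\Z)$. Finally, $\epsilon_j$ is chosen small enough (Cauchy estimates on a slightly smaller set) that the $C^3$-closeness near $\Sigma\cap M_{j-1}$ and fullness survive in the limit $\tilde f=\lim f_j$. By \cref{fact:hanntei}, exactly as in the proof of \cref{prop:weak.approx.}, agreement of $\phi$ to order $3$ at $p\in\Sigma$ preserves the singularity type. This gives (iv). Items (i)--(iii) pass to the limit directly.

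The induction step splits into three cases according to what $(c_{j-1},c_j]$ contains. \emph{Noncritical case} (no critical point and no point of $\Lambda$): $M_{j-1}$ is a deformation retract of $M_j$. Apply \cref{prop:weak.approx.} to $S=M_{j-1}$ with $M$ replaced by a neighbourhood of $M_j$, then use the Runge property and the homotopy-equivalence argument of \cite{AFL21}. Concretely, we approximate on $M_j$ by applying \cref{prop:weak.approx.} with $\hml{M_{j-1}}\emb\hml{M}$ on the whole $M$, and then restrict. \emph{Point of $\Lambda$}: let $p\in\Lambda$ with $\rho(p)\in(c_{j-1},c_j)$. Form the admissible set $S_j=M_{j-1}\cup\overline{D}_p\cup E$, where $E$ is a smooth arc from $\partial M_{j-1}$ to $\partial\overline{D}_p$. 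On $\overline{D}_p$ take $\phi$ from $f|_V$. Along $E$, use \cref{fact:connecting.path.in.B} (with $A=\overline{\B}$ the null quadric, $A_{\mathrm{reg}}=\B$) to choose $\phi|_E$ connecting the endpoint values. We require $\Re\int_E\phi\theta=f(e')-f(e)$, which forces the values of $f$ to match at the endpoints and makes $(f,\phi\theta)$ a generalized maxface on $S_j$. Since $S_j$ adds no homology, no flux condition arises. Then apply \cref{prop:weak.approx.} with $A\ni p$, interpolation order $k(p)$, and $\Sigma\ni p$ if needed, and pass to $M_j$ as in the noncritical case. \emph{Critical point of index $0$}: a new disc component appears; treat it like the previous case with an arbitrary small maxface on it (e.g.\ a Lorentzian Enneper piece). \emph{Critical point of index $1$}: $M_j$ deformation retracts onto $M_{j-1}\cup E$ for an arc $E$ attached at two points of $\partial M_{j-1}$. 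Either a new closed curve $\gamma$ is created, or two components are joined. In the first case, use \cref{fact:connecting.path.in.B} with $v$ chosen so that $\int_\gamma\phi\theta=\I\,\mathfrak p([\gamma])$, i.e.\ the real period vanishes and the flux is prescribed. Adding the new curve to the homology basis $\mathcal C$ as in \cref{fact:homology.basis.of.addmissible} makes the flux condition (iii) of \cref{prop:weak.approx.} yield $\Flux=\mathfrak p$ on $H_1(M_j,\Z)$.

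The main obstacle will be the index-one step. There the path in $\B$ along $E$ must simultaneously do three things. It must match the given values of $\phi$ at both endpoints. It must have its integral equal to a prescribed complex vector: its real part should be the real period $0$, or the displacement between components, and its imaginary part the flux $\mathfrak p$. And it must keep the nondegeneracy needed for the period dominating spray, namely that the image of a subarc is not contained in a complex line through $0$. We plan to settle all three by \cref{fact:connecting.path.in.B}, which supplies a path with the prescribed integral and nondegenerate initial segment. A second delicate point is the convergence bookkeeping: choosing $\epsilon_j$ so that the limit stays full, stays a maxface (\cref{lem:full.then.not.identically.zero}), and keeps order-$3$ jets at $\Sigma$. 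We handle this as in \cite{AFL21}, taking $\epsilon_j$ less than half the distance allowed by the previous step.
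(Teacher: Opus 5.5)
Your overall architecture matches the paper's: a Morse exhaustion from \cref{prop:existence.of.Morse}, \cref{prop:weak.approx.} as the basic step, the noncritical, point-of-$\Lambda$, index-$0$ and index-$1$ cases with \cref{fact:connecting.path.in.B} on the attached arcs, then a limit. However, the start of the induction has a genuine gap, in two places.

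First, \cref{prop:weak.approx.} interpolates to order $s$ only at points of $A\cap\Int(S)$. At a point of $\Lambda$ lying on $\partial K$ or on one of the arcs of $S$ it only matches values. Hence the approximant you build on $M_0=S$ cannot agree with $f$ to order $k(p)$ at $p\in\Lambda\cap(S\setminus\Int(S))$. No later step repairs this, because each step interpolates the previous approximant rather than $f$. So (ii) is not obtained at those points, nor is (iv) at points of $\Sigma\cap\Lambda$ on the boundary of $S$. The missing idea is to enlarge $S$ first to $\hat{S}=S\cup\bigcup_p\overline{D}_p$, with small closed discs $\overline{D}_p\subset V$ around those points; on these discs $f$ is a genuine maxface. $\hat{S}$ is still a connected Runge admissible set with $\hml{S}\emb\hml{\hat{S}}$, and now all of $(\Lambda\cup\Sigma)\cap S$ lies in its interior.

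Second, putting $M_0=S$ while only asking $S\subset\{\rho<0\}$ leaves $\{\rho\le c_1\}\setminus S\supset\{\rho\le 0\}\setminus S$ uncontrolled. It may contain critical points of $\rho$, points of $\Lambda$, and new homology classes on which no flux has been prescribed, so your first step is none of the elementary cases. The paper handles this in three moves:
\begin{enumerate}
\item fix a regular neighbourhood $W_0=S_r$ with $\hml{S}\emb\hml{W_0}$ and $\Lambda\cap W_0\subset S$;
\item apply \cref{prop:weak.approx.} to $\hat{S}$ with $M$ replaced by $W_0$, which gives $f_0$ on all of $W_0$;
\item only then choose $\rho$ with $\hat{S}\subset\{\rho<0\}\subset\{\rho\le 0\}\subset W_0$, and set $M_0=\{\rho\le 0\}$.
\end{enumerate}

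There are also some further corrections.
\begin{enumerate}
\item Your ``concretely'' sentence applies \cref{prop:weak.approx.} on all of $M$ using $\hml{M_{j-1}}\emb\hml{M}$. This isomorphism fails in general; the map is only injective. The proposition must be applied with $M$ replaced by $W_j=\{\rho<c_j+b_j\}$.
\item It must also be applied componentwise, since it assumes $S$ connected. In the index-$0$ case the new disc is a separate component of $W_j$ and is treated on its own, not joined by an arc.
\item In the limit, \cref{lem:full.then.not.identically.zero} only shows that $-|\tilde{\phi}^0|^2+|\tilde{\phi}^1|^2+|\tilde{\phi}^2|^2$ does not vanish identically. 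It does not keep $\tilde{\phi}$ away from $0\in\C^3$, and locally uniform limits of $\B$-valued maps can vanish. For example, $\phi_n=\bigl((z^2+n^{-2})/2,\,(n^{-2}-z^2)/2,\,z/n\bigr)\to(z^2/2,\,-z^2/2,\,0)$. In that case $d\tilde{f}$ has a zero and $\tilde{f}$ is not a maxface. You need an explicit condition such as the paper's $\epsilon_j<(1-\alpha_j)\min_{M_{j-1}}\|\phi_{j-1}\|$ with $\prod_j\alpha_j=1/2$.
\item The nondegeneracy you list as the main obstacle in the index-one step is not needed. \cref{prop:per.prerv.full.approx.}, and hence \cref{prop:weak.approx.}, accepts any $\phi\in\mathcal{A}(S,\B)$ and produces fullness itself in Steps 2--4 of its proof.
\item The jets at $\Sigma$ are fixed exactly at every step and pass to the limit by Weierstrass' convergence theorem. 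So no Cauchy-estimate slack is required for (iv).
\end{enumerate}
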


 \begin{proof}
   Let $g$ be a complete Riemannian metric on $M$ and 
   $d_g$ be the Riemannian distance function induced by $g$.
   The function $p \mapsto \mathrm{Inj}(p)$, 
   which assigns the injectivity radius of $\exp_p$ to each point $p$, is continuous on $M$
   since $g$ is complete.
   Thus, $\max_S \mathrm{Inj}$ is positive. 
   By \cref{rem:reg.nbd.of.admissible.set}, 
   there is $r_1 > 0$ such that for each $r \in (0, r_1)$, 
   a regular neighborhood $S_r$ of $S$ obtained by using $d_g$ satisfies 
   $H_1(S, \Z) \xhookrightarrow[]{\cong} H_1(S_r, \Z)$. 
   Fix a positive number 
   $0 < r < \min\{\max_S \mathrm{Inj}, \, r_1, \, d_g(S, \Lambda \setminus S)\}$.
   Then, a regular neighborhood $W_0 \coloneqq S_r$ is connected and satisfies
   $\hml{S} \emb \hml{W_0}$ and $\Lambda \cap W_0 \subset S$.
   The connectedness of $W_0$ follows from 
   $W_0 = \bigcup_{p \in S} D_g(p, r)$, where $D_g(p, r)$ is the geodesic ball of radius $r$
   centered at $p$.
   For each point $p \in \Lambda \cap S$ with $p \notin \Int(S)$, we take a simply connected 
   compact neighborhood $\overline{D}_p \subset W_0$ such that 
   \begin{equation}
     \hat{S} \coloneqq S \cup 
     \bigcup_{\substack{p \in \Lambda \cap S \\ p \notin \Int(S)}} \overline{D}_p
   \end{equation}
   is a connected Runge admissible set in $W_0$ satisfying
   $\hml{S} \emb \hml{\hat{S}} \emb \hml{W_0}$. 
   Furthermore, we define  $\hat{k} : \Lambda \cup \Sigma \to \Z$ by
   \begin{equation}
       \hat{k}(p) = 
       \begin{cases}
         3 & (\text{$p \in \Int(S) \cap \Sigma$}), \\
         \max\{3, \ k(p)\} & (\text{$p \in \Lambda \cap \Sigma$}), \\
         k(p) & (\text{otherwise}). \\
       \end{cases}
    \end{equation}
   Applying \cref{prop:weak.approx.} to 
   $(f|_{\hat{S}}, \phi|_{\hat{S}}\,\theta) \in \GMF(\hat{S})$ and $\epsilon_0 \in (0, \epsilon/2)$, 
   we obtain a full maxface $f_0 \in \MF(W_0)$ and 
   a full map $\phi_0 \in \mathcal{O}(W_0, \B)$ such that
   \begin{itemize}
     \item 
     $\|f_0 - f\|_{\hat{S}} < \epsilon_0, \quad \|\phi_0 - \phi\|_{\hat{S}} < \epsilon_0$,
     
     \item
     $f_0(p) = f(p_0) + \Re\displaystyle\int_{p_0}^p \phi_0 \, \theta$ 
     for a fixed point $p_0 \in S$,
     
     \item
     $\Flux_{f_0} = \Flux_{f}^{\mathcal{C}} = \mathfrak{p}$ on 
     $H_1(S, \Z) \xhookrightarrow[]{\cong} H_1(\hat{S}, \Z) \xhookrightarrow[]{\cong} H_1(W_0, \Z)$,
     and
     
     \item
     The differences $f_0 - f$ and $\phi_0 - \phi$ vanish at every point of 
     $(\Lambda \cup \Sigma) \cap \Int(\hat{S}) = (\Lambda \cup \Sigma) \cap S$ to order 
     $\max \{\hat{k}(p) \colon p \in (\Lambda \cup \Sigma) \cap S\}$.
   \end{itemize}
   
   Let $\rho : M \to \R$ be a strongly subharmonic Morse exhaustion function obtained by 
   applying \cref{prop:existence.of.Morse} to the compact set $\hat{S}$, 
   its open neighborhood $W_0$,
   and the closed discrete set $\Lambda$.
   Since $\rho$ is an exhaustion function, 
   $\rho(\mathrm{Crit}(\rho) \cup \Lambda)$ is a closed discrete subset of 
   $\R$ not containing $0$. 
   Thus, there exists a sequence $\{c_i\}_{i = 0}^{\infty}$ of 
   regular values of $\rho$ satisfying the following conditions.
   \begin{itemize}
     
     \item
     $ 0 = c_0 < c_1 < \cdots < c_i < c_{i + 1} < \cdots$, 
     $\lim_{i \rightarrow \infty}c_i = \infty.$
     
     \item 
     $A_i \coloneqq \{c_{i-1} < \rho < c_i\}$ $(i = 1, 2,\dots)$ contains 
     at most one critical point of $\rho$ or at most one point of $\Lambda$, but not both.
     
     \item
     There is a sequence $\{b_i\}_{i = 1}^{\infty} \subset \R$ with $c_i + b_i < c_{i + 1}$
     such that
     the set $\{c_i < \rho < c_i + b_i \}$ contains  neither a critical point of $\rho$ nor 
     a point of $\Lambda$.
   \end{itemize}
   We note that the compact domains $M_i \coloneqq \{\rho \le c_i\}$ $(i = 0, 1, \dots)$
   with smooth boundaries have finitely many connected components, 
   and thus they are admissible sets.
   Moreover, the open neighborhoods 
   $W_i \coloneqq \{\rho < c_i + b_i\}$ $(i = 1, 2, \dots)$ of $M_i$ 
   satisfy $H_1(M_i, \Z) \xhookrightarrow[]{\cong} H_1(W_i, \Z)$.
   Let $n_i$ denote the number of connected components of both $M_i$ and $W_i$, 
   and let us denote these components by $M_{i,l}$ and $W_{i,l}$ ($l = 1, \dots, n_i$), respectively,
   where we assume that $M_{i,1}$ and $W_{i,1}$ are the components containing $S$.
   Let $\{\alpha_i\}_{i = 0}^{\infty} \subset \R$ be a sequence defined by 
   $\alpha_i \coloneqq \exp(-2^{-i-1} \log 2)$ $(i = 0, 1, \dots)$. Clearly, the following hold.
   \begin{equation}
     \frac{1}{2} < \alpha_0 < \alpha_1 < \cdots < \alpha_i < \alpha_{i + 1}< \cdots < 1, \qquad 
     \prod_{i = 0}^{\infty} \alpha_i = \frac{1}{2}.
   \end{equation}
   
   Now, we construct $f_i \in \MF(W_i)$,
   $\phi_i \in \mathcal{O}(W_i, \B)$, and
   $\epsilon_i > 0$ for $i = 1, 2, \dots$ inductively,
   satisfying the following conditions:
   \begin{itemize}
     \item[$(a_i)$] 
     $\epsilon_i < \min\{\epsilon/2^{i + 1}, \  (1 - \alpha_{i}) \, 
     \min_{M_{i - 1}}\|\phi_{i-1}\|\}$.
     
     \item[$(b_i)$]
     $\|f_i - f_{i - 1}\|_{M_{i-1}} < \epsilon_i$ and 
     $\|\phi_i - \phi_{i - 1}\|_{M_{i-1}} < \epsilon_i$.
     Furthermore, the restriction of $\phi_i$ to each connected component of $W_i$ is a full map.
        
     \item[$(c_i)$]
     $2 \partial f_i = \phi_i \, \theta$.
     
     \item[$(d_i)$]
     $\Flux_{f_{i}} = \mathfrak{p}$ on $H_1(M_i, \Z) \xhookrightarrow[]{\cong} H_1(W_i, \Z)$.

     \item[$(e_i)$]
     The differences $f_i - f$ and $\phi_i - \phi$ vanish at every point  
     $p \in (\Lambda \cup \Sigma) \cap M_i$ to order $\hat{k}(p)$.
   \end{itemize}
   We note that $f_0 \in \MF(W_0)$ and $\phi_{0} \in \mathcal{O}(W_0, \B)$ satisfy the conditions
   $(c_0)$--$(e_0)$. 
   For $i \ge 1$, assuming that $f_{i-1}$ and $\phi_{i-1}$ satisfy conditions 
   $(c_{i-1})$--$(e_{i-1})$, we construct $f_i$ and $\phi_i$ satisfying $(b_i)$--$(e_i)$.

   \emph{Case 1: The set $A_i$ contains neither a critical point of $\rho$ nor 
   a point of $\Lambda$.}
   In this case, $n_{i-1} = n_i$ holds, and each connected component of $W_i$ contains
   exactly one connected component of $M_{i-1}$. By reindexing if necessary, 
   we may assume that $M_{i-1, l} \subset W_{i,l}$ ($l = 1, \dots, n_{i-1}=n_i$). 
   For $l \in \{1, \dots, n_i\}$, 
   since $\hml{M_{i-1,l}} \emb \hml{W_{i,l}}$, we get $f_i \in \MF(W_{i,l})$ and 
   $\phi_i \in \mathcal{O}(W_{i, l}, \B)$ by applying \cref{prop:weak.approx.} to 
   the generalized maxface
   $(f_{i-1}|_{M_{i-1,l}}, \phi_{i-1}|_{M_{i-1,l}} \, \theta) \in \GMF(M_{i-1, l})$,
   the finite set $(\Lambda \cup \Sigma) \cap M_{i, l} = (\Lambda \cup \Sigma) \cap M_{i-1, l}$,
   and the positive integer $s = \max\{\hat{k}(p) : p \in (\Lambda \cup \Sigma) \cap M_{i, l}\}$.
   We can regard $f_i$ and $\phi_i$ as maps on $W_i$ that satisfy the conditions $(b_i)$--$(e_i)$, 
   where we set $p_{i, l} =  p_{i-1, l}$.
   Note that when applying \cref{prop:weak.approx.}, 
   it is not necessary to fix a homology basis of $M_{i-1, l}$. 
   This is because the flux of the generalized maxface 
   $(f_{i-1}|_{M_{i-1,l}}, \phi_{i-1}|_{M_{i-1, l}} \, \theta)$, 
   obtained as the restriction of a maxface defined on 
   an open neighborhood $W_{i-1,l}$, is independent of the homology basis of $M_{i-1, l}$.

   \emph{Case 2: The set $A_i$ contains a point $a \in \Lambda$.}
   As in Case 1, we may assume that $M_{i-1, l} \subset W_{i,l}$ ($l = 1, \dots, n_{i-1}=n_i$). 
   Fix the connected component $W_{i,l}$ containing $a \in \Lambda$, 
   and take a neighborhood $\overline{D}_a \subset V \cap W_{i, l}$ of $a$ 
   which is diffeomorphic to a closed disk.
   Furthermore, take an arc 
   $E \subset (W_{i, l} \setminus (\overline{D}_a \cup M_{i - 1, l})) \cup \{q, q'\}$ 
   whose endpoints are $q \in \partial M_{i-1}$ and $q' \in \partial \overline{D}_a$.
   Let $\gamma : [0, 1] \to E$ be a parameterization of $E$ with $\gamma(0) = q$ and $\gamma(1) = q'$.
   By \cref{fact:connecting.path.in.B}, there exists a smooth map $\hat{\phi} : [0, 1] \to \B$ such that
   $\hat{\phi}(0) = \phi_{i - 1}(q)$, $\hat{\phi}(1) = \phi(q')$, and
   \begin{equation}
     \int_E \hat{\phi} \, \theta = \int_0^1 \hat{\phi} \, \gamma^*\theta = f(q') - f_{i - 1}(q).
   \end{equation}
   We set $\hat{M}_{i - 1, l} = M_{i - 1,l} \cup E \cup \overline{D}_a$ 
   and define two maps $\hat{\phi}_{i - 1} : \hat{M}_{i - 1,l} \to \B$ and 
   $\hat{f}_{i-1} : \hat{M}_{i-1, l} \to \L$ by
   \begin{equation}
     \hat{\phi}_{i - 1}(p) \coloneqq 
     \begin{cases}
       \phi_{i - 1}(p) & (p \in M_{i -1,l}), \\
       \hat{\phi} \circ \gamma^{-1} (p) & (p \in E), \\
       \phi(p) & (p \in \overline{D}_a),
     \end{cases} \quad
     \hat{f}_{i-1}(p) 
     \coloneqq f_{i-1}(p_{i-1}) + \Re \int_{p_{i-1}}^p \hat{\phi}_{i-1} \, \theta,
   \end{equation}
   where $p_{i - 1} \in M_{i-1, l}$ is some fixed point.
   The map $\hat{f}_{i-1}$ is well-defined. Indeed, let $C \subset \hat{M}_{i-1, l}$ 
   be an arbitrary closed curve. If $C \subset \overline{D}_a$, 
   we have $\Re \int_C \hat{\phi}_{i - 1} \, \theta = 0$ because 
   $\hat{\phi}_{i - 1}|_{\overline{D}_a} \in \mathcal{O}(\overline{D}_a)$ and
   $\overline{D}_a$ is simply connected. If $C \subset M_{i-1, l}$, the same holds since 
   $\hat{\phi}_{i - 1}|_{M_{i-1, l}} = \phi_{i-1} = 2 \partial f_{i-1} / \theta$. 
   Finally, suppose $C \cap \overline{D}_a \neq \varnothing$ and $C \cap M \neq \varnothing$. 
   Then there exist closed curves $C_1 \subset M$ and $C_2 \subset \overline{D}_a$ 
   passing through $q$ and $q'$, respectively, such that
   \begin{equation}
     \Re \int_C \hat{\phi}_{i-1} \, \theta 
     = \Re \int_{C_1} \hat{\phi}_{i-1} \, \theta 
       + \Re \int_E \hat{\phi}_{i-1} \, \theta 
         - \Re \int_E \hat{\phi}_{i-1} \, \theta 
           + \Re \int_{C_2} \hat{\phi}_{i-1} \, \theta.
   \end{equation}
   Since we have already seen that 
   $\Re \int_{C_1} \hat{\phi}_{i-1} \, \theta = \Re \int_{C_2} \hat{\phi}_{i-1} \, \theta = 0$, 
   we obtain $\Re \int_C \hat{\phi}_{i-1} \, \theta = 0$. 
   By applying \cref{prop:weak.approx.} to 
   $(\hat{f}_{i-1}, \hat{\phi}_{i-1} \, \theta) \in \GMF(\hat{M}_{i-1})$, 
   we obtain $f_i \in \MF(W_{i,l})$ and $\phi_i \in \mathcal{O}(W_{i,l}, \B)$ such that 
   the differences $f_i - \hat{f}_{i-1}$ and $\phi_i - \hat{\phi}_{i-1}$ vanish on 
   $(\Lambda \cup \Sigma) \cap M_{i, l} = ((\Lambda \cap \Sigma) \cap M_{i-1, l}) \cup \{a\}$
   to order $\max\{\hat{k}(p) : p \in (\Lambda \cup \Sigma) \cap M_{i, l}\}$.
   For the connected components other than $W_{i,l}$, 
   we can construct $f_i$ and $\phi_i$ by applying the argument of Case 1. 
   By regarding them as maps defined on $W_i$, 
   we see that they satisfy conditions $(b_i)$ through $(e_i)$.

   \emph{Case 3: The set $A_i$ contains a critical point $a \in \mathrm{Crit}(\rho)$.}
   In this case, further case distinctions are required depending on 
   the Morse index of $a \in \mathrm{Crit}(\rho)$.
   Note that since $c_{i-1}$ and $c_i$ are regular values of $\rho$, 
   both $M_{i-1}$ and $M_i$ are manifolds with boundary.
   If the Morse index is $0$, then $M_i$ is diffeomorphic to the disjoint union of $M_{i-1}$ 
   and a closed disk containing $a$.
   If the Morse index is $1$, then $M_i$ is diffeomorphic to the manifold obtained by attaching 
   $I = [0,1]^2$ to $M_{i-1}$ via a diffeomorphism 
   $\psi : \{\pm 1\} \times [0,1] \to \partial M_{i-1}$ 
   onto its image. The image of $\psi(\{\pm 1\} \times [0,1])$ consists of 
   two connected components, and the situation differs depending on 
   whether they are contained in the same connected component of $M_{i-1}$ or not 
   (see \cite[pp. 21--22]{AFL21}).
   Since $\rho$ is strongly subharmonic, the Morse index of $a \in \mathrm{Crit}(\rho)$ cannot be $2$.
   Therefore, Case 3 is divided into the following three subcases:
   \begin{itemize}
     \item
     \emph{Subcase 3a: The Morse index of $a \in \mathrm{Crit}(\rho)$ is $0$.}

     \item 
     \emph{Subcase 3b: The Morse index of $a \in \mathrm{Crit}(\rho)$ is $1$, 
   and the two connected components of $\psi(\{\pm 1\} \times [0, 1])$ are contained in 
   the same connected component of $M_{i-1}$.}

     \item 
     \emph{Subcase 3c: The Morse index of $a \in \mathrm{Crit}(\rho)$ is $1$, 
   and the two connected components of $\psi(\{\pm 1\} \times [0, 1])$ are contained in 
   different connected components of $M_{i-1}$.}
   \end{itemize}

   \emph{Subcase 3a:}
   The sets $M_i$ and $W_i$ have exactly one more connected component than $M_{i-1}$ and $W_{i-1}$,
   respectively. That is, $n_{i-1} + 1 = n_i$. 
   As in Case 1, we may assume that $M_{i-1, l} \subset W_{i,l}$ for $l = 1, \dots, n_{i-1}$. 
   Furthermore, we can suppose that the component $M_{i, n_i}$ contains the critical point 
   $a \in \mathrm{Crit}(\rho)$ and is diffeomorphic to a closed disk.
   By making it sufficiently small if necessary, we can choose a chart $(U, z = u + \I \,v)$ 
   containing $M_{i, n_i}$. We set $\hat{M}_{i - 1} \coloneqq M_{i -1} \cup M_{i, n_i}$. 
   Let  
   $\hat{f}_{i-1} : \hat{M}_{i - 1} \to \L$ and 
   $\hat{\phi}_{i - 1} : \hat{M}_{i - 1} \to \B$ be the maps given by
   \begin{align}
     \hat{f}_{i-1}(p) =
     \begin{cases}
       f_{i-1}(p) & (p \in M_{i - 1}), \\
       (0, u(p), v(p))  &  (p \in M_{i, n_i}),
     \end{cases} 
     \qquad
     \hat{\phi}_{i-1} \coloneqq \frac{2 \, \partial \hat{f}_{i-1}}{\theta}.
   \end{align}
   There is a one-to-one correspondence between the connected components of $\hat{M}_{i-1}$ and
   those of $W_{i}$, and a homology basis of each connected component of $\hat{M}_{i-1}$ is also
   a homology basis for the connected component of $W_{i}$ containing it. 
   Therefore, by applying \cref{prop:weak.approx.} on each connected component, 
   we obtain $f_i \in \MF(W_i)$ and $\phi_i \in \mathcal{O}(W_i, \B)$ satisfying $(b_i)$--$(e_i)$.
   We deal with condition $(e_i)$ by choosing a finite set and 
   a positive integer in the same way as in Cases 1 and 2.

   \emph{Subcase 3b:} 
   There exists an arc $E \subset \Int(M_i) \setminus \Int(M_{i - 1})$ such that
   \begin{enumerate}
     \item[(1)]
     it intersects $\partial M_{i - 1}$ transversely only at its endpoints $q_0$ and $q_1$, and
     
     \item[(2)]
     $\hat{M}_{i-1} \coloneqq M_{i - 1} \cup E$ satisfies $\hml{\hat{M}_{i - 1}} \emb \hml{W_i}$.
   \end{enumerate}
   In this case, $q_0$ and $q_1$ are contained in the same connected component of $M_{i-1}$. 
   Since $n_{i - 1} = n_i$, by reindexing if necessary, we may assume that 
   $q_0, q_1 \in M_{i-1, n_{i-1}} \subset M_{i , n_i}$.
   Take an arc $E' \subset \Int(M_{i-1, n_{i - 1}}) \cup \{q_0, q_1\}$ connecting $q_0$ and $q_1$, 
   and set $C_0 \coloneqq E \cup E'$. By adding $C_0$ to the homology basis of $M_{i-1}$, 
   we obtain a homology basis $\hat{\mathcal{C}}$ for $\hat{M}_{i - 1}$, $M_i$, and $W_i$. 
   Let $\gamma : [0, 1] \to E$ and $\Gamma : [0,1] \to E'$ be parameterizations of $E$ and $E'$,
   respectively, satisfying 
   $\gamma(0) = q_0, \ \gamma(1) = q_1, \ \Gamma(0) = q_1$, and $\Gamma(1) = q_0$. 
   By \cref{fact:connecting.path.in.B}, we can choose a map 
   $\hat{\phi} : [0,1] \to \B$ satisfying the following:
   \begin{equation}
      \int_E \hat{\phi} \, \theta 
      = \I \, \mathfrak{p}([C_0]) - \int_{E'} \phi_{i - 1} \, \theta.
   \end{equation}
   Let $\hat{\phi}_{i-1} : \hat{M}_{i -1} \to \B$ be a map defined by
   \begin{equation} \label{eq:def.of.phi.hat}
     \hat{\phi}_{i-1}(p) \coloneqq
     \begin{cases}
       \phi_{i - 1}(p) & (p \in M_{i -1}), \\
       \hat{\phi}(\gamma^{-1}(p)) & (p \in E).
     \end{cases}
   \end{equation}
   Then, for each homology basis $C \in \hat{\mathcal{C}}$, it holds that
   \begin{equation}
    \int_C\hat{\phi}_{i - 1} \, \theta =
    \begin{dcases}
      \int_{C}\phi_{i-1} \, \theta = \I \, \mathfrak{p}([C]) 
      & (C \ne C_0) \\
      \int_{E} \hat{\phi} \, \theta + \int_{E'} \phi_{i -1} \, \theta
      = \I \, \mathfrak{p}([C_0]) 
      & (C = C_0).
    \end{dcases}
   \end{equation}
   Let $\hat{f}_{i-1} : \hat{M}_{i-1} \to \L$ be a map defined 
   on each connected component of $\hat{M}_{i-1}$ as
   \begin{equation} \label{eq:def.of.f.hat}
     \hat{f}_{i-1}(p) = f_{i - 1}(p_l) + \Re\int_{p_{l}}^p \hat{\phi}_{i - 1}\, \theta,
   \end{equation}
   where $p_{l} \in M_{i - 1, l}$ are fixed points $(l = 1, \dots n_{i - 1})$.
   Then, \cref{prop:weak.approx.} ensures the existence of 
   $f_i \in \MF(W_i)$ and $\phi_i \in \mathcal{O}(W_i,\B)$ satisfying 
   conditions $(b_i)$--$(e_i)$. Condition $(d_i)$ follows from 
   $\Flux_{f_i} = \Flux_{\hat{f}_{i-1}}^{\hat{\mathcal{C}}}$, and
   we deal with condition $(e_i)$ by choosing a finite set and
   a positive integer in the same way as in Cases 1 and 2.

   \emph{Subcase 3c:}
   In this case, $q_0$ and $q_1$ are contained in different connected components of $M_{i-1}$.
   Let $E \subset \Int(M_i) \setminus \Int(M_{i - 1})$ be an arc satisfying the conditions 
   (1) and (2) in Subcase 3b.
   Since $n_{i - 1} = n_i + 1$, by reindexing if necessary, we may assume that 
   $q_0 \in M_{i-1, n_{i-1}-1}$, $q_1 \in M_{i-1, n_{i-1}}$, and 
   $M_{i-1, n_{i-1}-1} \cup M_{i-1, n_{i-1}}\subset M_{i , n_i}$.
   Let $\gamma : [0,1] \to E$ be a parameterization of $E$ with 
   $\gamma(0) = q_0$ and $\gamma(1) =q_1$,
   and let $\hat{\phi} : [0,1] \to \B$ be a map obtained by 
   \cref{fact:connecting.path.in.B} satisfying
   \begin{equation}
     \int_{E} \hat{\phi} \, \theta = f_{i-1}(q_1) - f_{i-1}(q_0).
   \end{equation}
   We define $\hat{\phi}_{i-1} : \hat{M}_{i - 1} \to \B$ by \eqref{eq:def.of.phi.hat}, and 
   define $\hat{f}_{i - 1} : \hat{M}_{i - 1} \to \L$ on each connected component of $\hat{M}_{i-1}$
   as \eqref{eq:def.of.f.hat}.
   Proceeding as in Subcase 3b, \cref{prop:weak.approx.} ensures the existence of 
   $f_i \in \MF(W_i)$ and $\phi_i \in \mathcal{O}(W_i,\B)$ satisfying conditions $(b_i)$--$(e_i)$.
   
   Now, we obtain $f_i \in \MF(W_i)$, $\phi_i \in \mathcal{O}(W_i, \B)$, and $\epsilon_i > 0$ 
   ($i = 1, 2, \dots$) satisfying conditions $(a_i)$--$(d_i)$. 
   Fix an arbitrary $p \in M$. Then, there is an integer $i_0 > 0$ such that for all $i \ge i_0$, 
   $p$ is contained in $M_i$. The sequences $\{f_i(p)\}_{i \ge i_0}$ and $\{\phi_i(p)\}_{i \ge i_0}$
   satisfy the following for every $i_0 < i < j$.
   \begin{align}
     |f_i(p) - f_j(p)| 
     &\le \sum_{k = i + 1}^j\|f_{k}-f_{k-1}\|_{M_{k-1}}
     \le \sum_{k = i + 1}^j \epsilon_k
     < \sum_{k = i + 1}^j \frac{\epsilon}{2^{k + 1}},\\
     |\phi_i(p) - \phi_j(p)| 
     &\le \sum_{k = i + 1}^j\|\phi_{k}-\phi_{k-1}\|_{M_{k-1}}
     \le \sum_{k = i + 1}^j \epsilon_k
     < \sum_{k = i + 1}^j \frac{\epsilon}{2^{k + 1}}.
   \end{align}
   Hence, $\{f_i(p)\}_{i \ge i_0}$ and $\{\phi_i(p)\}_{i \ge i_0}$ are Cauchy sequences.
   By denoting the limits of these sequences by $\tilde{f}(p)$ and $\tilde{\phi}(p)$ respectively, 
   we define $\tilde{f} : M \to \L$ and $\tilde{\phi} : M \to \C^3$.
   Let us show that $\tilde{f}$ is the desired maxface.
   
   First, let us show that $\{\phi_i\}_i$ uniformly converges to $\tilde{\phi}$ on compact sets. 
   For any compact set $K \subset M$, there exists a positive integer $i_1$ such that
   $K \subset M_i$ for all $i \ge i_1$. 
   Take an arbitrary $r > 0$ and $p \in K$. Then, there exists a positive integer $i_2$ 
   such that $\|\phi_i(p) - \phi_j(p)\| < r$ holds for $j > i \ge i_2$. 
   Letting $j \to \infty$, we have $\|\phi_i(p) - \tilde{\phi}(p)\| \le r$. 
   Since $r > 0$ is independent of $p$, we obtain $\|\phi_i - \tilde{\phi}\|_K \le r$. 
   This shows that $\{\phi_i\}_i$ converges locally uniformly to $\tilde{\phi}$, 
   and therefore $\tilde{\phi} \in \mathcal{O}(M, \C^3)$.
   
   By condition $(a_i)$, we have 
   $\|\phi_i - \phi_{i - 1}\|_{M_{i - 1}} 
   < \epsilon_i < (1 - \alpha_i) \min_{M_{i - 1}}\|\phi_{i - 1}\|$. 
   Therefore, for all $p \in M_{i - 1}$, we have
   \begin{equation}
   \|\phi_i(p)\| - \|\phi_{i - 1}(p)\|
   > -\epsilon_i
   > -(1 - \alpha_i) \, \min_{M_{i - 1}}\|\phi_{i - 1}\|
   \ge -(1 - \alpha_i) \, \|\phi_{i-1}(p)\|.
   \end{equation}
   That is, $\|\phi_i\| > \alpha_i \|\phi_{i - 1}\|$ holds on $M_{i - 1}$. 
   Taking an arbitrary $p \in M$, there exists a positive integer $i_3$ such that 
   $p \in M_{i-1}$ for all $i \ge i_3$. 
   Thus, for all $i > i_3$, we have
   \begin{align}
   \|\phi_i(p)\|
    &> \alpha_i \, \|\phi_{i - 1}(p)\| 
    > \alpha_i \, \alpha_{i - 1} \, \|\phi_{i - 2}(p)\| \\ 
    &> \cdots 
    > \alpha_i \cdots \alpha_{i_3+1} \, \|\phi_{i_3}(p)\| 
    \ge \left(\prod_{j = 0}^{\infty} \alpha_j\right) \|\phi_{i_3}(p)\|.
   \end{align}
   Letting $i \to \infty$, we obtain $\|\tilde{\phi}(p)\| \ge (1/2) \, \|\phi_{i_3}(p)\| > 0$.
   Furthermore, we know that $\tilde{\phi} \in \mathcal{O}(M, \B)$ 
   since $-(\phi_i^0(p))^2 + (\phi_i^1(p))^2 + (\phi_i^2(p))^2 = 0$ holds for all $p \in M$ and 
   $\phi_i = (\phi_i^0, \phi_i^1, \phi_i^2)$.
   
   Take an arbitrary $[C] \in \hml{M}$. Since $C \subset M$ is compact, 
   $\{\phi_i\}_i$ converges uniformly to $\tilde{\phi}$ on $C$. Therefore,
   \begin{equation}
     \lim_{i \rightarrow \infty}\int_C \phi_i \, \theta = \int_C\tilde{\phi} \, \theta. 
   \end{equation}
   On the other hand, since $C \subset M_i$ for any sufficiently large $i$, condition ($d_i$) implies
   \begin{equation}
     \int_{C} \phi_i \, \theta = \I \, \mathfrak{p}([C]).
   \end{equation}
   From the above, we obtain
   \begin{equation} \label{eq:well-defness.and.flux}
     \Re\int_{C} \tilde{\phi} \, \theta = 0 \quad \text{and} \quad
     \Im\int_{C} \tilde{\phi} \, \theta = \mathfrak{p}([C]).
   \end{equation}
   Condition $(b_i)$ ensures that the locally uniform limit $\tilde{\phi}$ of $\{\phi_i\}$ is 
   a full map. 
   By \cref{lem:full.then.not.identically.zero}, 
   the map $\bar{f} : M \to \L$ defined below is a maxface:
   \begin{align}
     \bar{f}(p) \coloneqq \tilde{f}(p_0) + \Re \int_{p_0}^p \tilde{\phi} \, \theta,
   \end{align}
   where $p_0 \in M$ is a fixed point.
   Let us show that $\tilde{f} = \bar{f}$. 
   Choose a positive integer $i$ large enough so that $M_i$ is connected, 
   and take a curve $\gamma_p$ in $M_i$ connecting an arbitrary point $p \in M_i$ and $p_0$.
   Noting that $\phi_i$ converges uniformly on $\gamma_p$ since it is compact, we obtain
   \begin{align}
     \tilde{f}(p) 
     = \lim_{i \rightarrow \infty} f_i(p)
     = \lim_{i \rightarrow \infty} 
       \left( f_i(p_{0}) + \Re \int_{\gamma_p} \phi_{i} \, \theta \right)
     = \tilde{f}(p_0) + \Re \int_{\gamma_p} \tilde{\phi} \, \theta
     = \bar{f}(p)
   \end{align}
   Hence, $\tilde{f}$ is a maxface. 
   The following calculation shows that it satisfies assertion (i):
   \begin{align}
     \|\tilde{f} - f\|_S
     &\le \|f - f_0\|_S + \sum_{k = 1}^{i} \|f_{k} - f_{k - 1}\|_{M_{k-1}} + \|f_{i} - \tilde{f}\|_S \\
     &< \frac{\epsilon}{2} + \sum_{k = 1}^i \frac{\epsilon}{2^{k + 1}} + \|f_{i} - \tilde{f}\|_S 
     < \epsilon + \|f_{i} - \tilde{f}\|_S \rightarrow \epsilon \qquad (i \rightarrow \infty).
   \end{align}

    Let us verify that condition (ii) holds. 
    First, for each $p \in \Lambda \cup \Sigma$ and any sufficiently large $i$, 
    condition $(e_i)$ implies $f_i(p) = f(p)$. Letting $i \to \infty$, we obtain $\tilde{f}(p) = f(p)$.
    Moreover, since $\{\phi_i\}_i$ converges to $\tilde{\phi}$ uniformly on compact sets, 
    its derivatives $\{\phi_i^{(l)}\}_i$ also converge locally uniformly to $\tilde{\phi}^{(l)}$.
    Furthermore, condition $(e_i)$ ensures that for all $p \in \Lambda \cup \Sigma$ and 
    sufficiently large $i$, we have 
    $\phi_{i}^{(l)}(p) = \phi^{(l)}(p)$ ($l = 1, \dots, \hat{k}(p)$). 
    Letting $i \to \infty$, we obtain
    \begin{equation} \label{eq:order.of.phi}
      \tilde{\phi}^{(l)}(p) = \phi^{(l)}(p) \qquad (l = 1, \dots, \hat{k}(p)).
    \end{equation}
    This, together with the same argument as in \cref{prop:weak.approx.}, yields (ii). 
    Furthermore, (iii) is an immediate consequence of equation \eqref{eq:well-defness.and.flux}, 
    while (iv) follows from \eqref{eq:order.of.phi} together with the same argument as in 
    \cref{prop:weak.approx.}.
  \end{proof}
  
  By setting $\mathfrak{p} \equiv 0$ in \cref{thm:main}, 
  we obtain approximation and interpolation for Lorentzian null immersions.
  
  \begin{corollary}
    Assume that $M$ is an open Riemann surface, 
    $\theta$ is a nonvanishing holomorphic $1$-form on $M$, 
    $S \subset M$ is a connected Runge admissible set,
    $\Lambda \subset M$ is a closed discrete subset, 
    $V \subset M$ is an open neighborhood of $\Lambda$,
    $F : S \cup V \to \C^3$ is a map such that 
    $(F|_S, \phi|_S \, \theta)$ is a generalized Lorentzian null immersion and 
    $F|_V$ is a Lorentzian null immersion, 
    where $\phi = \partial F / \theta $.

    Given a positive number $\epsilon > 0$ and
    a map $k \colon \Lambda \to \Z_{>0}$, 
    there exists a Lorentzian null immersion 
    $\tilde{F} \colon M \to \C^3$ satisfying the following conditions.
    \begin{enumerate}
      \item
      $\|\tilde{F} - F \|_S \le \epsilon$.
     
      \item
      The difference $\tilde{F} - F$ vanishes to order $k(p)$ at every point $p \in \Lambda$.
    \end{enumerate}
  \end{corollary}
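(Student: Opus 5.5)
The plan is to deduce the corollary from \cref{thm:main} with $\Sigma = \varnothing$ and $\mathfrak{p} \equiv 0$, by passing between a Lorentzian null immersion $F$ and the pair consisting of its real part and its imaginary part. Set $f \coloneqq \Re F$ on $S \cup V$, so that $2\,\partial f/\theta = \partial F/\theta = \phi$. On $S$ the periods $\int_C \phi\,\theta$ vanish for every closed curve $C$, so both their real and imaginary parts vanish. Hence $(f|_S, \phi|_S\,\theta)$ satisfies the period and integral conditions of a generalized maxface with $\Flux_f^{\mathcal{C}} = 0$. Similarly, on $V$ the map $f|_V$ is the real part of a null immersion.

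The one obstacle is the nondegeneracy condition: we need $-|\phi^0|^2+|\phi^1|^2+|\phi^2|^2 \not\equiv 0$ on $\Int(S)$ and on each component of $V$. This can fail, for instance when $\phi$ takes values in a real multiple of a fixed null vector. To handle it I would first perturb, replacing $V$ by small disjoint disks around the points of $\Lambda$ and using the (ii)-type interpolation only to the required finite orders. On the components where the quantity vanishes identically, Step~2 of \cref{prop:per.prerv.full.approx.} lets me modify $\phi$ by $\Theta(\eta_n)$. This makes $\phi$ full while preserving periods, approximation on $S$, and agreement to high order at the points of $\Lambda\cap S$; then \cref{lem:full.then.not.identically.zero} gives nondegeneracy. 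On a disk around a point of $\Lambda \setminus S$, I would replace $F$ by a null immersion agreeing with it to order $k(p)$ whose derivative is full. For example, add $z^{k(p)+1}$ times a suitable null flow via $\exp(h A)$ as in \cref{lem:vec.fld.and.flow.maxface}; on a simply connected disk there are no periods to preserve. Integration of the modified $\phi$ then gives the modified $F$, whose $\Re$ is a maxface. Alternatively, one could argue that the proof of \cref{thm:main} never really uses nondegeneracy of the input, since \cref{prop:weak.approx.} produces full maps regardless; I would note this as the cleaner route if one is willing to reread that proof.

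Apply \cref{thm:main} with $\epsilon/C$ in place of $\epsilon$ (for a constant $C$ fixed below), with $k$, with $\Sigma=\varnothing$ and with $\mathfrak{p} = 0$. This gives a full maxface $\tilde f$ with $\tilde\phi = 2\,\partial\tilde f/\theta \in \mathcal{O}(M,\B)$, whose periods satisfy $\Re\int_C\tilde\phi\,\theta=0$ and, from $\Flux_{\tilde f}=0$, also $\Im\int_C \tilde\phi\,\theta = 0$. So $\tilde F(p) \coloneqq F(p_0) + \int_{p_0}^p \tilde\phi\,\theta$ is a well-defined Lorentzian null immersion on $M$, since $\tilde\phi$ never vanishes, and $\Re \tilde F = \tilde f$.

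It remains to check (i) and (ii) for $\Im\tilde F$. The proof of \cref{thm:main} actually yields $\|\tilde\phi-\phi\|_S$ small and agreement of $\tilde\phi$ with $\phi$ to order $k(p)$ at $p \in \Lambda$. Integrating along curves in $S$ of length at most $L$ from $p_0$ (as in \cref{prop:weak.approx.}) gives $\|\tilde F - F\|_S \le L\|\tilde\phi-\phi\|_S$, which fixes $C$. For (ii), the vanishing of $\tilde F - F$ at a point $p\in\Lambda$ follows from $\Re$-interpolation of $\tilde f$ together with equality of the imaginary parts. The latter I obtain by integrating $\tilde\phi-\phi$ along a path from $p_0$ to $p$ and using the vanishing of all periods. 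If this path leaves $S\cup V$, I would instead add the points of $\Lambda$ to $A$ via the arcs of \cref{prop:weak.approx.}(c)/Case~2, whose $\C^3$-periods are preserved by \cref{prop:per.prerv.full.approx.}(iii). Higher-order vanishing then follows because $d(\tilde F - F) = (\tilde\phi-\phi)\theta$ vanishes to order $k(p)$.
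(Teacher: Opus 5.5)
Your route is the paper's. Its entire proof is the sentence that the corollary follows from \cref{thm:main} with $\mathfrak{p}\equiv 0$. You are right that the statement of \cref{thm:main} alone does not suffice, since it only controls $\Re\tilde F=\tilde f$. Several of your points are fine:
\begin{enumerate}
\item the bound $\|\tilde F-F\|_S\le L\,\|\tilde\phi-\phi\|_S$;
\item the derivative part of (ii) via \eqref{eq:order.of.phi};
\item the observation that, with $\Sigma=\varnothing$, nondegeneracy of the input is never used in the proofs of \cref{prop:weak.approx.} and \cref{thm:main}.
\end{enumerate}
For $\Re\tilde F=\tilde f$ and $\tilde F(p_0)=F(p_0)$ to hold, take $p_0$ to be a point you keep in the interpolation set $A$ at every step.

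The gap is the order-zero part of (ii) for the imaginary component, i.e.\ $\Im\tilde F(p)=\Im F(p)$; the paper's one-line argument is silent here as well. Vanishing of all periods only makes $\int_{p_0}^{p}(\tilde\phi-\phi)\,\theta$ path-independent, not zero. For $p\in\Lambda\setminus S$, nothing makes $\tilde\phi$ close to $\phi$ along a path in $V$.

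Your fallback is preservation of $\C^3$-periods of the arcs by \cref{prop:per.prerv.full.approx.}(iii). But that only preserves the value an arc had when it was created. In Case~2 of the proof of \cref{thm:main}, the bridge $E$ from $q\in\partial M_{i-1}$ to $q'\in\partial\overline{D}_a$ is created with the \emph{real} integral $\int_E\hat\phi\,\theta=f(q')-f_{i-1}(q)$. Let $F_{i-1}$ be the null curve with $\Re F_{i-1}=f_{i-1}$. Then $\hat F_{i-1}-F\equiv\I\,(\Im F_{i-1}(q)-\Im F(q'))$ on $\overline{D}_a$. This constant is generally nonzero and is carried unchanged to the limit, so $\tilde F(a)\neq F(a)$. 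The same defect occurs in Subcase~3c, where the bridge merging two components gets the real integral $f_{i-1}(q_1)-f_{i-1}(q_0)$. That matters once a component not containing $S$ already carries points of $\Lambda$.

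To close the gap, rerun the induction at the level of the null curves $F_i$ and prescribe complex targets:
\begin{enumerate}
\item $\int_E\hat\phi\,\theta=F(q')-F_{i-1}(q)$ in Case~2;
\item $\int_E\hat\phi\,\theta=F_{i-1}(q_1)-F_{i-1}(q_0)$ in Subcase~3c.
\end{enumerate}
\cref{fact:connecting.path.in.B} permits this, since it accepts any $v\in\C^3$. The real parts, and hence everything about $\tilde f$, are unchanged. Subcase~3b already prescribes the full complex period. With this change, \cref{prop:per.prerv.full.approx.}(iii) transports $F_i=F$ at the interpolation points through all later steps, and your argument goes through.
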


  \begin{corollary} \label{cor:main.}
    Let $M$ be an open Riemann surface, 
    let $\Lambda \subset M$ be a closed discrete subset, and 
    let $\alpha \colon \Lambda \to \L$ be a map. 
    Then there exists a maxface $\tilde{f} \colon M \to \L$ such that 
    $\tilde{f}|_{\Lambda} = \alpha$. 
    Furthermore, given a map 
    $\alpha = (\alpha_1, \alpha_2) \colon \Lambda \to \L \times \{0, 1, 2, 3, 4, 5\}$, 
    there exists a maxface $\tilde{f} \colon M \to \L$ 
    satisfying $\tilde{f}|_{\Lambda} = \alpha_1$ and 
    the following conditions for each $p \in \Lambda$:
    \begin{itemize}
      \item 
      if $\alpha_2(p) = 0$, then $p$ is a regular point of $\tilde{f}$,

      \item 
      if $\alpha_2(p) = 1$, 
      then $\tilde{f}$ is $\mathcal{A}$-equivalent to a cuspidal edge at $p$,

      \item 
      if $\alpha_2(p) = 2$, 
      then $\tilde{f}$ is $\mathcal{A}$-equivalent to a swallowtail at $p$,

      \item 
      if $\alpha_2(p) = 3$,
      then $\tilde{f}$ is $\mathcal{A}$-equivalent to a cuspidal cross cap at $p$,

      \item 
      if $\alpha_2(p) = 4$,
      then $\tilde{f}$ is $\mathcal{A}$-equivalent to a cuspidal butterfly at $p$, and 

      \item 
      if $\alpha_2(p) = 5$,
      then $\tilde{f}$ is $\mathcal{A}$-equivalent to a cuspidal $S_1^-$ singularity at $p$.
    \end{itemize} 
  \end{corollary}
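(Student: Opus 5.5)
The plan is to derive \cref{cor:main.} directly from \cref{thm:main}. We take $S$ to be a single small closed disk and put all of the prescribed data into the interpolation conditions at the points of $\Lambda$.

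First fix a nonvanishing holomorphic $1$-form $\theta$ on $M$, which exists by Gunning--Narasimhan. Around each $p\in\Lambda$ choose pairwise disjoint coordinate disks $(D_p,z_p)$ with $z_p(p)=0$, and set $V\coloneqq\bigcup_{p\in\Lambda}D_p$; since $\Lambda$ is closed and discrete, the disks can be taken disjoint. Choose a point $q_0\in M\setminus\Lambda$, a small closed disk $S$ around $q_0$ disjoint from $\overline V$, and put $\Sigma\coloneqq\Lambda$. Then $S$ is a connected Runge admissible set with $H_1(S,\Z)=0$, so $\mathfrak p\equiv0$ is an admissible flux.

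Next we prescribe local maxfaces on each $D_p$, depending on $\alpha_2(p)$:
\begin{enumerate}
\item for $\alpha_2(p)=0$, the local plane $z\mapsto(0,\Re z,\Im z)$ (Weierstrass data $g=0$), which is regular at $0$;
\item for $\alpha_2(p)\in\{1,2,3\}$, the Lorentzian Enneper surface of \cref{ex:L.Enneper}, precomposed with a translation of $\C$ that sends $0$ to a cuspidal edge point, to $1$, or to $e^{\I\pi/4}$ respectively;
\item for $\alpha_2(p)\in\{4,5\}$, the maxfaces $f_1$, $f_2$ of \cref{ex:cB.cS}, recentred at $\log(1/\sqrt2)+\I\pi/2$.
\end{enumerate}
After shrinking $D_p$, each local model is defined on a simply connected disk. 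Its periods therefore vanish, so it is a genuine maxface on $D_p$, and we translate it in $\L$ so that its value at $p$ is $\alpha_1(p)$. The $\mathcal A$-type at $p$ is intrinsic and unchanged by this reparametrization, since a holomorphic change of chart is a diffeomorphism. On $S$ we take any maxface, for instance a translated plane.

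This gives $f\colon S\cup V\to\L$ with $(f|_S,\phi|_S\theta)\in\GMF(S)$ and $f|_V$ a maxface. We apply \cref{thm:main} with any $\epsilon$, with $k\equiv1$ (the theorem raises the order to $\hat k\ge3$ on $\Sigma$ automatically), and with $\mathfrak p\equiv0$. The resulting $\tilde f$ agrees with $f$ on $\Lambda$, so $\tilde f|_\Lambda=\alpha_1$. Condition (iv) transfers the cuspidal edge, swallowtail, cuspidal cross cap, cuspidal butterfly and cuspidal $S_1^-$ types at points with $\alpha_2(p)\ge1$. The first assertion of the corollary, without $\alpha_2$, is the special case $\alpha_2\equiv0$.

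The step that needs an extra argument is regularity at points with $\alpha_2(p)=0$, since (iv) only covers the five singular types. Here we use the interpolation instead: $\tilde\phi$ agrees with $\phi$ at $p$ by \eqref{eq:order.of.phi}. The singular set is the zero set of the continuous function $-|\phi^0|^2+|\phi^1|^2+|\phi^2|^2$, and for the plane this function is nonzero at $p$, so $\tilde f$ is also regular at $p$. It also remains to confirm that \cref{thm:main} allows $S\cap\Lambda=\varnothing$ with $\Sigma\subset\Lambda$; this is permitted, since $\Sigma\subset\Int(S)\cup\Lambda$.
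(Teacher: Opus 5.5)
Your proposal is correct and is essentially the paper's proof: you place local model maxfaces on disjoint charts around $\Lambda$ (a translated plane, the Lorentzian Enneper surface centred at a cuspidal edge point, at $1$, or at $e^{\I\pi/4}$, and the examples of \cref{ex:cB.cS} centred at $\log(1/\sqrt{2})+\I\pi/2$), translate them so that their values at $p$ are $\alpha_1(p)$, and apply \cref{thm:main} with $\Sigma=\Lambda$. Your explicit choice of a disjoint disk $S$ with $\mathfrak{p}\equiv 0$, and your regularity check at points with $\alpha_2(p)=0$ via $\tilde\phi(p)=\phi(p)$, supply details the paper leaves implicit; taking $k\equiv 2$ would let you obtain $\tilde\phi(p)=\phi(p)$ from condition (ii) of \cref{thm:main} itself rather than from inside its proof.
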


  \begin{proof}
    It suffices to prove the latter part of the claim, since it implies the former.
    We choose a chart $(U_p, z_p)$ around each point $p \in \Lambda$ such that
    \begin{itemize}
      \item 
      $U_p \cap U_q = \varnothing$ if $p \neq q$,

      \item
      $z_p(U_p)$ is the disk of radius $1$ centered at $z_p(p)$,

      \item 
      $z_p(p) = 0$ if $\alpha_2(p)$ = 0,

      \item 
      $z_p(p) = e^{\I \, \pi/3}$ if $\alpha_2(p) = 1$,

      \item 
      $z_p(p) = 1$ if $\alpha_2(p) = 2$,

      \item 
      $z_p(p) = e^{\I \, \pi / 4}$ if $\alpha_2(p) = 3$, and

      \item 
      $z_p(p) = \log(1/\sqrt{2}) + \I \, \pi /2$ if $\alpha_2(p) \in \{4, 5\}$.
    \end{itemize}
    
    Let $f_0 : \C \to \L$ be the Lorentzian Enneper surface as in \cref{ex:L.Enneper} and
    let $f_1, \ f_2 : \R \times (-\pi, \pi) \to \L$ be the maxfaces as in \cref{ex:cB.cS}.
    For every $q \in U_p$, we set 
    \begin{equation}
      f(q) \coloneqq
      \begin{cases}
        (0, \Re(z_p(q)), \Im(z_p(q))) + \alpha_1(p) & (\alpha_2(p) = 0), \\
        f_0(z_p(q)) - f_0(z_p(p)) + \alpha_1(p)  & (\alpha_2(p) = 1, 2, 3), \\
        f_1(z_p(q)) - f_1(z_p(p)) + \alpha_1(p) & (\alpha_2(p) = 4), \\
        f_2(z_p(q)) - f_2(z_p(p)) + \alpha_1(p) & (\alpha_2(p) = 5).
      \end{cases}
    \end{equation}
    Noting that the map 
    $f \colon \bigcup_{p \in \Lambda} U_p \to \L$ 
    satisfies $f|_{\Lambda} = \alpha_1$, we obtain
    the desired maxface $\tilde{f}$ by applying \cref{thm:main} to $f$.
  \end{proof}

  \begin{corollary} \label{cor:dense.im.sing}
    Let $M$ be an open Riemann surface. 
    Then there exists a maxface $\tilde{f} \colon M \to \L$ such that 
    the image of its singular set is dense in $\L$.
  \end{corollary}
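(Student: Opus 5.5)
The plan is to deduce the corollary from \cref{cor:main.} by a diagonal choice of prescribed singular points. Fix a countable dense subset $Q = \{x_1, x_2, \dots\}$ of $\L$ (for instance the points with rational coordinates). Choose a closed discrete subset $\Lambda = \{p_1, p_2, \dots\} \subset M$ with the $p_j$ pairwise distinct; such a set exists because $M$ is open and hence noncompact. For example, take a strongly subharmonic exhaustion function $\rho$ as in \cref{fact:str.subham.Morse.exh.fucn} and pick $p_j \in \{j < \rho < j+1\}$, which is nonempty since $\rho$ is unbounded and $M$ is connected. The key step is to define $\alpha = (\alpha_1, \alpha_2) \colon \Lambda \to \L \times \{0, \dots, 5\}$ by $\alpha_1(p_j) \coloneqq x_j$ and $\alpha_2(p_j) \coloneqq 1$, so that every point of $\Lambda$ is required to be a cuspidal edge.

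Next, \cref{cor:main.} yields a maxface $\tilde{f} \colon M \to \L$ with $\tilde{f}(p_j) = x_j$, and $\tilde{f}$ is $\mathcal{A}$-equivalent to a cuspidal edge at each $p_j$. A cuspidal edge point is a singular point: the map $(u,v) \mapsto (u^2, u^3, v)$ has a rank-one differential at the origin, and $\mathcal{A}$-equivalence preserves the rank of the differential. Hence each $p_j$ lies in the singular set $\{|g| = 1\}$ of $\tilde{f}$ (cf.\ \cref{fact:Weierstrass.rep.maxface}). Equivalently, by the criterion in \cref{fact:hanntei}, the data used in the proof of \cref{cor:main.} place $p_j$ on the unit circle of the Enneper model, and these values are interpolated to first order.

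It follows that the image of the singular set contains $\tilde{f}(\Lambda) = Q$, which is dense in $\L$. This completes the argument.

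The only delicate point is the existence of an infinite closed discrete set in $M$ carrying arbitrary target values. The corollary already handles an arbitrary map $\alpha$ on a closed discrete set, since \cref{thm:main} allows $\Lambda$ to be infinite and the construction places $f$ on disjoint charts $U_p$. So the main thing to check is that choosing the $p_j$ in successive shells of an exhaustion gives discreteness. This holds because every compact set meets only finitely many shells.
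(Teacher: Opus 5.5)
Your proposal is correct and is essentially the paper's proof: you enumerate a countable dense subset of $\L$ (the paper uses $\mathbb{Q}^3$), assign it to a closed discrete set $\Lambda=\{p_i\}$ with prescribed type $\alpha_2\equiv 1$ (cuspidal edge), and apply \cref{cor:main.}, so that $\tilde f(\Lambda)$ lies in the image of the singular set. Your additional justifications are sound and fill in details the paper leaves implicit. These are the existence of an infinite closed discrete set via exhaustion shells, where the shells are nonempty once $\min\rho<1$ (e.g.\ take $K\neq\varnothing$ in \cref{fact:str.subham.Morse.exh.fucn}), and the fact that a cuspidal edge point has a rank-one differential and is therefore singular.
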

    
  \begin{proof}
    Let $\mathbb{Q}^3 = \{q_i\}_{i = 1}^{\infty} \subset \L$. 
    We take a closed discrete subset $\Lambda = \{p_i\}_{i = 1}^{\infty}$ of $M$.
    By applying \cref{cor:main.} to $\Lambda$ and 
    the map $\alpha : \Lambda \ni p_i \mapsto (q_i, 1) \in \L \times \{0, 1, 2, 3, 4, 5\}$, 
    we obtain a maxface $\tilde{f} : M \to \L$ whose singular set contains $\Lambda$.
    Furthermore, $\tilde{f}$ satisfies $\tilde{f}(\Lambda) = \mathbb{Q}^3$, and thus
    it is the desired maxface.
  \end{proof}

  \begin{remark}
    The maxface obtained in the proof of \cref{cor:dense.im.sing} is such that
    the image of its cuspidal edges is dense in $\L$. 
    On the other hand, by changing the choice of $\alpha$, 
    we can construct examples in 
    which the images of swallowtails and other singularities are also dense.
  \end{remark}

%\bibliographystyle{amsalpha}
%\bibliography{bibliography}

\begin{thebibliography}{9}
     \bibitem{AFL21}
     A. Alarc\'on, F. Forstneri\v c, and F. J. L\'opez,
     \emph{Minimal Surfaces from a Complex Analytic Viewpoint},
     Springer Monogr. Math., Springer, Cham, 2021.

     \bibitem{AH.hol.null.in.SL2}
     A. Alarc\'on and J. Hidalgo,
     \emph{Holomorphic null curves in the special linear group},
     Trans. Amer. Math. Soc. \textbf{379} (2026), no. 6, 4089--4119.

     \bibitem{BS.49.Runge.gene}
     H. Behnke and K. Stein,
     \emph{Entwicklung analytischer Funktionen auf Riemannschen Fl\"achen},
     Math. Ann. \textbf{120} (1949), 430--461.

     \bibitem{Mergelyan.thm}
     E. Bishop,
     \emph{Subalgebras of functions on a Riemann surface},
     Pacific J. Math. \textbf{8} (1958), 29--50.

     \bibitem{92.ER.generalized.maximal}
     F. J. M. Estudillo and A. Romero,
     \emph{Generalized maximal surfaces in Lorentz--Minkowski space $\L$},
     Math. Proc. Camb. Phil. Soc. \textbf{111} (1992), 515--524.

     \bibitem{Weierstrass.thm}
     H. Florack,
     \emph{Regul\"are und meromorphe Funktionen auf nicht geschlossenen Riemannschen Fl\"achen},
     Schr. Math. Inst. Univ. M\"unster (1948), no. 1, 34 pp.

     \bibitem{2020.legacy.hol.approx.}
     J. E. Forn\ae ss, F. Forstneri\v c, and E. F. Wold,
     \emph{Holomorphic approximation: the legacy of Weierstrass, Runge, Oka--Weil, and Mergelyan},
     in \emph{Advancements in Complex Analysis: From Theory to Practice},
     Springer, Cham, 2020, 133--192.

     \bibitem{F17}
     F. Forstneri\v c,
     \emph{Stein Manifolds and Holomorphic Mappings},
     2nd ed., Ergebnisse der Mathematik und ihrer Grenzgebiete. 3. Folge.
     A Series of Modern Surveys in Mathematics, vol. 56, Springer, Cham, 2017.

     \bibitem{F19.Mergelyan.for.mfd-valued.map}
     F. Forstneri\v c,
     \emph{Mergelyan's and Arakelian's theorems for manifold-valued maps},
     Mosc. Math. J. \textbf{19} (2019), no. 3, 465--484.

     \bibitem{FKKRUY10}
     S. Fujimori, Y. Kawakami, M. Kokubu, W. Rossman, M. Umehara, and K. Yamada,
     \emph{Hyperbolic metrics on Riemann surfaces and space-like CMC-1 surfaces in de Sitter 3-space},
     in \emph{Recent Trends in Lorentzian Geometry},
     Springer Proc. Math. Stat., vol. 26, Springer, New York, 2013, 1--47.

     \bibitem{FSUY08}
     S. Fujimori, K. Saji, M. Umehara, and K. Yamada,
     \emph{Singularities of maximal surfaces},
     Math. Z. \textbf{259} (2008), 827--848.

     \bibitem{GN67.nonvani.hol.1-form}
     R. C. Gunning and R. Narasimhan,
     \emph{Immersion of open Riemann surfaces},
     Math. Ann. \textbf{174} (1967), 103--108.

     \bibitem{83.Kobayashi}
     O. Kobayashi,
     \emph{Maximal surfaces in the $3$-dimensional Minkowski space $L^3$},
     Tokyo J. Math. \textbf{6} (1983), 297--309.

     \bibitem{51.Merglyan.original}
     S. N. Mergelyan,
     \emph{On the representation of functions by series of polynomials on closed sets},
     Dokl. Akad. Nauk SSSR (N.S.) \textbf{78} (1951), 405--408.

     \bibitem{OT18.duality.cb.cS}
     Y. Ogata and K. Teramoto,
     \emph{Duality between cuspidal butterflies and cuspidal $S_1^-$ singularities 
           on maximal surfaces},
     Note Mat. \textbf{38} (2018), 115--130.

     \bibitem{1885.Runge}
     C. Runge,
     \emph{Zur Theorie der eindeutigen analytischen Functionen},
     Acta Math. \textbf{6} (1885), no. 1, 229--244.

     \bibitem{UY06}
     M. Umehara and K. Yamada,
     \emph{Maximal surfaces with singularities in Minkowski space},
     Hokkaido Math. J. \textbf{35} (2006), 13--40.
  \end{thebibliography}

\end{document}